\newtheorem{thm}{Theorem}[section]
\newtheorem{cor}[thm]{Corollary}
\newtheorem{lem}[thm]{Lemma}
\newtheorem{prop}[thm]{Proposition}
\newtheorem{rem}[thm]{Remark}
\numberwithin{equation}{section}
\begin{document}

\begin{frontmatter}



\title{Multilinear fractional maximal and integral operators with homogeneous kernels, Hardy--Littlewood--Sobolev and Olsen-type inequalities}


\author{Cong Chen, Kaikai Yang and Hua Wang}
\address{School of Mathematics and System Science, Xinjiang University,\\
Urumqi 830046, P. R. China\\
\textbf{Dedicated to the memory of Li Xue}}
\ead{1362447069@qq.com,wanghua@pku.edu.cn}

\begin{abstract}
Let $m\in \mathbb{N}$ and $0<\alpha<mn$. Let $\mathcal{T}_{\Omega,\alpha;m}$ be the multilinear fractional integral operator with homogeneous kernels defined by
\begin{equation*}
\mathcal{T}_{\Omega,\alpha;m}(\vec{f})(x):=\int_{(\mathbb R^n)^m}\frac{\Omega_1(x-y_1)\cdots\Omega_m(x-y_m)}{\|(x-y_1,\dots,x-y_m)\|^{mn-\alpha}}f_1(y_1)\cdots f_m(y_m)\,dy_1\cdots dy_m,
\end{equation*}
and the related multilinear fractional maximal operator $\mathcal{M}_{\Omega,\alpha;m}$ with homogeneous kernels is given by
\begin{equation*}
\mathcal{M}_{\Omega,\alpha;m}(\vec{f})(x):=\sup_{r>0}m(B(x,r))^{\frac{\alpha}{n}}
\prod_{i=1}^m\frac{1}{m(B(x,r))}\int_{|x-y_i|<r}\big|\Omega_i(x-y_i)f_i(y_i)\big|\,dy_i.
\end{equation*}
In this paper, we will use the idea of Hedberg to reprove that the multilinear operators $\mathcal{T}_{\Omega,\alpha;m}$ and $\mathcal{M}_{\Omega,\alpha;m}$ are bounded from $L^{p_1}(\mathbb R^n)\times L^{p_2}(\mathbb R^n)\times\cdots\times L^{p_m}(\mathbb R^n)$ into $L^q(\mathbb R^n)$ provided that $\vec{\Omega}=(\Omega_1,\Omega_2,\dots,\Omega_m)\in L^s(\mathbf{S}^{n-1})$, $s'<p_1,p_2,\dots,p_m<n/{\alpha}$,
\begin{equation*}
\frac{\,1\,}{p}=\frac{1}{p_1}+\frac{1}{p_2}+\cdots+\frac{1}{p_m} \quad \mbox{and} \quad \frac{\,1\,}{q}=\frac{\,1\,}{p}-\frac{\alpha}{n}.
\qquad (*)
\end{equation*}
This result was first obtained by Chen and Xue. We also prove that under the assumptions that $\vec{\Omega}=(\Omega_1,\Omega_2,\dots,\Omega_m)\in L^s(\mathbf{S}^{n-1})$, $s'\leq p_1,p_2,\dots,p_m<n/{\alpha}$ and $(*)$, the multilinear operators $\mathcal{T}_{\Omega,\alpha;m}$ and $\mathcal{M}_{\Omega,\alpha;m}$ are bounded from $L^{p_1}(\mathbb R^n)\times L^{p_2}(\mathbb R^n)\times \cdots\times L^{p_m}(\mathbb R^n)$ into $L^{q,\infty}(\mathbb R^n)$, which are completely new. Moreover, we will use the idea of Adams to show that $\mathcal{T}_{\Omega,\alpha;m}$ and $\mathcal{M}_{\Omega,\alpha;m}$ are bounded from $L^{p_1,\kappa}(\mathbb R^n)\times L^{p_2,\kappa}(\mathbb R^n)\times \cdots\times L^{p_m,\kappa}(\mathbb R^n)$ into $L^{q,\kappa}(\mathbb R^n)$ whenever $s'<p_1,p_2,\dots,p_m<n/{\alpha}$, $0<\kappa<1$,
\begin{equation*}
\frac{\,1\,}{p}=\frac{1}{p_1}+\frac{1}{p_2}+\cdots+\frac{1}{p_m} \quad \mbox{and} \quad \frac{\,1\,}{q}=\frac{\,1\,}{p}-\frac{\alpha}{n(1-\kappa)},\qquad (**)
\end{equation*}
and also bounded from $L^{p_1,\kappa}(\mathbb R^n)\times L^{p_2,\kappa}(\mathbb R^n)\times \cdots\times L^{p_m,\kappa}(\mathbb R^n)$ into $WL^{q,\kappa}(\mathbb R^n)$ whenever $s'\leq p_1,p_2,\dots,p_m<n/{\alpha}$, $0<\kappa<1$ and $(**)$. Some new estimates in the limiting cases are also established. Applications to the Hardy--Littlewood--Sobolev and Olsen-type inequalities are discussed as well.
\end{abstract}

\begin{keyword}
Multilinear fractional integral operators, multilinear fractional maximal operators, homogeneous kernels, Morrey spaces, Hardy--Littlewood--Sobolev inequalities, Olsen-type inequalities
\MSC[2020] 42B20, 42B25, 42B35
\end{keyword}

\end{frontmatter}

\section{Introduction}

\subsection{Linear fractional integrals}
\label{sec11}
In this paper, the symbols $\mathbb R$ and $\mathbb N$ stand for the sets of all real numbers and natural numbers, respectively. Let $\mathbb R^n$ be the $n$-dimensional Euclidean space endowed with the Lebesgue measure $dx$. The Euclidean norm of $x=(x_1,x_2,\dots,x_n)\in \mathbb R^n$ is given by $|x|:=(\sum_{i=1}^n x_i^2)^{1/2}$. Let $\mathbf{S}^{n-1}:=\{x\in\mathbb R^n:|x|=1\}$ denote the unit sphere in $\mathbb R^n$ ($n\geq2$) equipped with the normalized Lebesgue measure $d\sigma(x')$, where we use $x'$ to denote the unit vector in the direction of $x$. Let $\Omega$ be a homogeneous function of degree zero on $\mathbb R^n$ and $\Omega\in L^s(\mathbf{S}^{n-1})$ with $s\geq1$. That is to say:
\begin{enumerate}
  \item for any $\lambda>0$ and $x\in\mathbb R^n$, $\Omega(\lambda x)=\Omega(x)$;
  \item $\|\Omega\|_{L^s(\mathbf{S}^{n-1})}:=\big(\int_{\mathbf{S}^{n-1}}|\Omega(x')|^sd\sigma(x')\big)^{1/s}<+\infty$.
\end{enumerate}
For $\alpha\in(0,n)$, then the homogeneous fractional integral operator $T_{\Omega,\alpha}$ is defined by
\begin{equation*}
T_{\Omega,\alpha}f(x):=\int_{\mathbb R^n}\frac{\Omega(x-y)}{|x-y|^{n-\alpha}}f(y)\,dy,\quad \forall~x\in\mathbb R^n.
\end{equation*}
Given any $\alpha\in[0,n)$ and $f\in L^1_{\mathrm{loc}}(\mathbb R^n)$, the fractional maximal function of $f$ is defined by
\begin{equation*}
M_{\Omega,\alpha}f(x):=\sup_{r>0}\frac{1}{m(B(x,r))^{1-\alpha/n}}\int_{|x-y|<r}\big|\Omega(x-y)f(y)\big|\,dy,\quad\forall~x\in\mathbb R^n,
\end{equation*}
where the supremum on the right-hand side is taken over all $r>0$.
\begin{itemize}
  \item When $\alpha=0$ and $\Omega\equiv1$, $M_{\Omega,\alpha}$ is just the standard Hardy--Littlewood maximal operator on $\mathbb R^n$, which is denoted by $M$;
  \item When $\Omega\equiv1$, $M_{\Omega,\alpha}$ is just the fractional maximal operator $M_{\alpha}$ on $\mathbb R^n$, and $T_{\Omega,\alpha}$ is just the Riesz potential operator $I_{\alpha}$ with $0<\alpha<n$.
\end{itemize}
The Riesz potential can be seen as the fundamental solution of the fractional Laplacian operator. It is well known that the Riesz potential operator $I_{\alpha}$ plays an important role in harmonic analysis, PDE and potential theory, particularly in the study of smoothness properties of functions. The boundedness properties of $I_{\alpha}$ between various Banach spaces have been extensively studied. Let $1<p<q<\infty$. Let us recall the classical Hardy--Littlewood--Sobolev theorem, which states that $I_{\alpha}$ is bounded from $L^p(\mathbb R^n)$ into $L^q(\mathbb R^n)$ if and only if $1/q=1/p-\alpha/n$. We also know that for $1=p<q<\infty$, $I_{\alpha}$ is bounded from $L^1(\mathbb R^n)$ into $L^{q,\infty}(\mathbb R^n)$ if and only if $q=n/{(n-\alpha)}$. However, for the critical index $p=n/{\alpha}$ and $0<\alpha<n$, the strong type $(p,\infty)$ estimate of the operator $I_{\alpha}$ does not hold in general (see \cite[p.119]{stein}). Instead, in this case, we have that as a substitute the Riesz potential operator $I_{\alpha}$ is bounded from $L^{n/{\alpha}}(\mathbb R^n)$ to $\mathrm{BMO}(\mathbb R^n)$ (see \cite[p.130]{duoand}).

For any $x_0\in\mathbb R^n$ and $r>0$, let $B(x_0,r):=\{x\in\mathbb R^n:|x-x_0|<r\}$ denote the open ball centered at $x_0$ of radius $r$, and $B(x_0,r)^{\complement}$ denote its complement. We use the notation $m(B(x_0,r))$ for the Lebesgue measure of the ball $B(x_0,r)$. Given $B=B(x_0,r)$ and $t>0$, we will write $tB$ for the $t$-dilate ball, which is the ball with the same center $x_0$ and with radius $tr$. It can be shown that $M_{\alpha}$ satisfies the same norm inequalities as $I_{\alpha}$ for $0<\alpha<n$, $1<p<n/{\alpha}$ and $1/q=1/p-\alpha/n$. The weak type $(1,q)$ inequality can be proved by using covering lemma arguments when $q=n/{(n-\alpha)}$ (see, for example, \cite[Theorem 2]{muckenhoupt2}), and $M_{\alpha}$ is clearly of strong type $(p,\infty)$ when $p=n/{\alpha}$. Actually, by H\"{o}lder's inequality
\begin{equation*}
\frac{1}{m(B(x,r))^{1-\alpha/n}}\int_{|x-y|<r}|f(y)|\,dy\leq\bigg(\int_{|x-y|<r}|f(y)|^p\,dy\bigg)^{1/p}\leq\|f\|_{L^p}.
\end{equation*}
The strong type $(p,q)$ inequalities are established via the Marcinkiewicz interpolation theorem.

\subsection{Lebesgue and Morrey spaces}
\label{sec12}
Recall that, for any given $p\in(0,\infty)$, the Lebesgue space $L^p(\mathbb R^n)$ is defined as the set of all integrable functions $f$ on $\mathbb R^n$ such that
\begin{equation*}
\|f\|_{L^p}:=\bigg(\int_{\mathbb R^n}|f(x)|^p\,dx\bigg)^{1/p}<+\infty,
\end{equation*}
and the weak Lebesgue space $L^{p,\infty}(\mathbb R^n)$ is defined to be the set of all Lebesgue measurable functions $f$ on $\mathbb R^n$ such that
\begin{equation*}
\|f\|_{L^{p,\infty}}:=\sup_{\lambda>0}\lambda\cdot m\big(\big\{x\in\mathbb R^n:|f(x)|>\lambda\big\}\big)^{1/p}<+\infty.
\end{equation*}
Let $L^{\infty}(\mathbb R^n)$ denote the Banach space of all essentially bounded measurable functions $f$ on $\mathbb R^n$.

On the other hand, the classical Morrey spaces $L^{p,\kappa}(\mathbb R^n)$ were originally introduced by Morrey in \cite{morrey} to study the local regularity of solutions to second order elliptic partial differential equations. Nowadays these spaces have been studied intensively in the literature, and found many important applications in harmonic analysis, potential theory and nonlinear dispersive equations(see \cite{adams1}). Let us now recall the definition of the Morrey space and weak Morrey space. Let $0<p<\infty$ and $0\leq\kappa\leq1$. We denote by $L^{p,\kappa}(\mathbb R^n)$ the Morrey space of all $p$-locally integrable functions $f$ on $\mathbb R^n$ such that
\begin{equation*}
\begin{split}
\|f\|_{L^{p,\kappa}}:=&\sup_{B\subset\mathbb R^n}\bigg(\frac{1}{m(B)^{\kappa}}\int_B|f(x)|^p\,dx\bigg)^{1/p}\\
=&\sup_{B\subset\mathbb R^n}\frac{1}{m(B)^{\kappa/p}}\big\|f\cdot\chi_{B}\big\|_{L^p}<+\infty.
\end{split}
\end{equation*}
Note that $L^{p,0}(\mathbb R^n)=L^p(\mathbb R^n)$ and $L^{p,1}(\mathbb R^n)=L^{\infty}(\mathbb R^n)$ by the Lebesgue
differentiation theorem. If $\kappa<0$ or $\kappa>1$, then $L^{p,\kappa}=\Theta$, where $\Theta$ is the set of all functions equivalent to $0$ on $\mathbb R^n$. We also denote by $WL^{p,\kappa}(\mathbb R^n)$ the weak Morrey space of all measurable functions $f$ on $\mathbb R^n$ such that
\begin{equation*}
\begin{split}
\|f\|_{WL^{p,\kappa}}:=&\sup_{B\subset\mathbb R^n}\frac{1}{m(B)^{\kappa/p}}\big\|f\cdot\chi_{B}\big\|_{L^{p,\infty}}\\
=&\sup_{B\subset\mathbb R^n}\sup_{\lambda>0}\frac{1}{m(B)^{\kappa/p}}\lambda\cdot m\big(\big\{x\in B:|f(x)|>\lambda\big\}\big)^{1/p}<+\infty,
\end{split}
\end{equation*}
where $\chi_{B}$ denotes the characteristic function of the ball $B$: $\chi_B(x)=1$ if $x\in B$ and $0$ if $x\notin B$.

\subsection{Multilinear fractional integrals}
\label{sec13}
Let $2\leq m\in\mathbb N$ and $(\mathbb R^n)^m=\overbrace{\mathbb R^n\times\cdots\times\mathbb R^n}^m$ be the $m$-fold product space. Let $\vec{f}:=(f_1,f_2,\dots,f_m)$ and $0<\alpha<mn$. The multilinear fractional integral operator $\mathcal{I}_{\alpha;m}$ is defined by
\begin{equation*}
\mathcal{I}_{\alpha;m}(\vec{f})(x):=
\int_{(\mathbb R^n)^m}\frac{f_1(y_1)\cdots f_m(y_m)}{\|(x-y_1,\dots,x-y_m)\|^{mn-\alpha}}\,dy_1\cdots dy_m,\quad\forall~x\in\mathbb R^n,
\end{equation*}
where we denote
\begin{equation*}
\big\|(x-y_1,\dots,x-y_m)\big\|:=\Big(\sum_{j=1}^m|x-y_j|^2\Big)^{1/2}.
\end{equation*}
The multilinear fractional integral operator $\mathcal{I}_{\alpha;m}$ is a natural generalization of the linear case. Suppose that each $f_i$ is locally integrable on $\mathbb R^n$, $i=1,2,\dots,m$. Then we define the multilinear fractional maximal operator by
\begin{equation*}
\mathcal{M}_{\alpha;m}(\vec{f})(x):=\sup_{r>0}m(B(x,r))^{\frac{\alpha}{n}}
\prod_{i=1}^m\frac{1}{m(B(x,r))}\int_{|x-y_i|<r}\big|f_i(y_i)\big|\,dy_i,\quad\forall~x\in\mathbb R^n,
\end{equation*}
where the supremum is taken over all $r>0$.
\begin{itemize}
  \item Obviously, when $\alpha=0$, $\mathcal{M}_{\alpha;m}$ is just the new multi(sub)linear maximal function $\mathcal{M}$, which was first defined and studied by Lerner et al. \cite{lerner} in 2009. This new maximal operator can be used to establish the characterization of the multiple $A_{\vec{P}}$ weights and to obtain some weighted estimates for the multilinear Calder\'on--Zygmund singular integral operators and their commutators, see \cite{lerner} for further details.
  \item The multilinear fractional integral $\mathcal{I}_{\alpha;m}$ was introduced and studied by Kenig and Stein \cite{kenig} in 1999. Another type of multilinear fractional integral was considered by Grafakos \cite{gra1} in 1992, and further studied by Grafakos and Kalton \cite{gra2} in 2001. Their works originate from the bilinear fractional integral operator
      \begin{equation*}
      \mathcal{B}_{\alpha}(f,g)(x)=\int_{\mathbb R^n}\frac{f(x+t)g(x-t)}{|t|^{n-\alpha}}\,dt.
      \end{equation*}
      They showed that $\mathcal{B}_{\alpha}$ is bounded from $L^{p_1}(\mathbb R^n)\times L^{p_2}(\mathbb R^n)$ into $L^q(\mathbb R^n)$, where $1/q=1/{p_1}+1/{p_2}-\alpha/n$ with $p_1,p_2>1$.
\end{itemize}

In general, the following strong type and weak type estimates were first given by Kenig and Stein in \cite[Theorem 1]{kenig}(in the special case), see also \cite{chen} and \cite{moen} for the unweighted case.

\begin{thm}[\cite{kenig}]\label{kenigstein}
Let $2\leq m\in \mathbb{N}$ and $0<\alpha<mn$. Assume that $1\leq p_1,p_2,\dots,p_m<\infty$, $1/p=1/{p_1}+1/{p_2}+\cdots+1/{p_m}$ and $1/q=1/p-{\alpha}/n$.

$(i)$ If $p_i>1$, $i=1,2,\dots,m$, then there is a constant $C>0$ independent of $\vec{f}$ such that
\begin{equation*}
\big\|\mathcal{I}_{\alpha;m}(\vec{f})\big\|_{L^q}\leq C\prod_{i=1}^m\|f_i\|_{L^{p_i}}
\end{equation*}
holds for all $\vec{f}=(f_1,f_2,\dots,f_m)\in L^{p_1}(\mathbb R^n)\times L^{p_2}(\mathbb R^n)\times \cdots\times L^{p_m}(\mathbb R^n)$.

$(ii)$ If  at least one $p_i$ equals one, then there is a constant $C>0$ independent of $\vec{f}$ such that
\begin{equation*}
\big\|\mathcal{I}_{\alpha;m}(\vec{f})\big\|_{L^{q,\infty}}\leq C\prod_{i=1}^m\|f_i\|_{L^{p_i}}
\end{equation*}
holds for all $\vec{f}=(f_1,f_2,\dots,f_m)\in L^{p_1}(\mathbb R^n)\times L^{p_2}(\mathbb R^n)\times \cdots\times L^{p_m}(\mathbb R^n)$.
\end{thm}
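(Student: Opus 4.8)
The plan is to prove Theorem~\ref{kenigstein} by reducing the multilinear fractional integral to the linear Riesz potential and then invoking the classical Hardy--Littlewood--Sobolev theorem together with its weak endpoint analogue, both recalled in Section~\ref{sec11}. The key observation is a pointwise comparison. Since $\|(x-y_1,\dots,x-y_m)\|=\big(\sum_{j=1}^m|x-y_j|^2\big)^{1/2}$, for each fixed $j$ we have $\|(x-y_1,\dots,x-y_m)\|\ge|x-y_j|$, so that
\begin{equation*}
\frac{1}{\|(x-y_1,\dots,x-y_m)\|^{mn-\alpha}}\le\prod_{j=1}^m\frac{1}{|x-y_j|^{\theta_j(n-\alpha/m)}}
\end{equation*}
cannot be split directly; instead the cleaner route is to split the exponent symmetrically, writing $mn-\alpha=\sum_{j=1}^m\beta_j$ with a suitable choice of $\beta_j\in(0,n)$ and using $\|(x-y_1,\dots,x-y_m)\|^{-\beta_j}\le|x-y_j|^{-\beta_j}$ for each factor, which yields the pointwise domination
\begin{equation*}
\big|\mathcal{I}_{\alpha;m}(\vec f)(x)\big|\le\prod_{j=1}^m\int_{\mathbb R^n}\frac{|f_j(y_j)|}{|x-y_j|^{\beta_j}}\,dy_j=\prod_{j=1}^m I_{n-\beta_j}(|f_j|)(x).
\end{equation*}

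First I would fix the exponents: set $\alpha_j:=n-\beta_j\in(0,n)$ chosen so that $\sum_{j=1}^m\alpha_j=\alpha$ and so that each linear piece lands in the right target space, i.e. $1/q_j=1/p_j-\alpha_j/n$ with $\sum_j 1/q_j=1/q$. Solving these constraints simultaneously is a short linear-algebra computation; because $1/q=1/p-\alpha/n>0$ and each $p_j<\infty$, one can always allocate the $\alpha_j$ so that $0<\alpha_j<n$ and $q_j>p_j$, with strict inequalities $q_j<\infty$ exactly when $p_j>1$. Then for part $(i)$, the Hardy--Littlewood--Sobolev theorem gives $\|I_{\alpha_j}(|f_j|)\|_{L^{q_j}}\le C\|f_j\|_{L^{p_j}}$ for each $j$, and an application of the generalized Hölder inequality with $\sum_j 1/q_j=1/q$ to the product bound above yields
\begin{equation*}
\big\|\mathcal{I}_{\alpha;m}(\vec f)\big\|_{L^q}\le\prod_{j=1}^m\big\|I_{\alpha_j}(|f_j|)\big\|_{L^{q_j}}\le C\prod_{j=1}^m\|f_j\|_{L^{p_j}},
\end{equation*}
which is the strong-type conclusion.

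For part $(ii)$, when some $p_i=1$ the corresponding factor $I_{\alpha_i}(|f_i|)$ only enjoys the weak bound $\|I_{\alpha_i}(|f_i|)\|_{L^{q_i,\infty}}\le C\|f_i\|_{L^1}$ with $q_i=n/(n-\alpha_i)$, while the factors with $p_j>1$ retain strong $L^{q_j}$ bounds. The plan is to combine one weak factor with the remaining strong factors via a weak-type Hölder inequality: if $\sum_j 1/q_j=1/q$ and at least one $L^{q_j}$ norm is replaced by an $L^{q_j,\infty}$ norm, then the product still lies in $L^{q,\infty}$ with the expected norm control. I expect the main obstacle to be exactly this bookkeeping in the allocation of the $\alpha_j$: one must verify that the system $\sum_j\alpha_j=\alpha$, $1/q_j=1/p_j-\alpha_j/n$, $0<\alpha_j<n$ admits a valid solution in all the edge configurations (several indices equal to $1$, or $\alpha$ close to $mn$), and that when more than one $p_i$ equals $1$ the multiple weak factors can still be merged into a single $L^{q,\infty}$ estimate. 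Handling the simultaneous weak factors cleanly — rather than just one — is the delicate point, and I would address it by an explicit distributional (level-set) argument estimating $m(\{x:\prod_j g_j(x)>\lambda\})$ through a splitting of $\lambda$ across the factors, which is the standard device for weak-type product inequalities.
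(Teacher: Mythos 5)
Your proposal is correct, and the two points you flag yourself both close without difficulty. For the exponent bookkeeping, the proportional choice $\alpha_j:=\alpha p/p_j$ works in every configuration: since $1/q=1/p-\alpha/n>0$ forces $\alpha p<n$, and $p_j\geq 1$, one gets $0<\alpha_j<n/{p_j}\leq n$, while by construction $\sum_j\alpha_j=\alpha$ and $1/{q_j}=1/{p_j}-\alpha_j/n>0$ with $\sum_j 1/{q_j}=1/q$; there are no bad edge cases, even with several $p_i=1$ or $\alpha$ near $mn$. For several weak factors, the level-set device you describe is exactly the standard proof of H\"older's inequality for weak $L^p$ with all factors weak: after normalizing $\|g_j\|_{L^{q_j,\infty}}=1$, split $\lambda=\prod_j\lambda_j$ with $\lambda_j=\lambda^{q/{q_j}}$ and use $\{\prod_j g_j>\lambda\}\subset\bigcup_j\{g_j>\lambda_j\}$, which yields $\|\prod_j g_j\|_{L^{q,\infty}}\leq m^{1/q}\prod_j\|g_j\|_{L^{q_j,\infty}}$ for any $0<q_j<\infty$, including the case $q<1$ (which does occur here when several $p_i$ are close to or equal to $1$).

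Your route is, however, genuinely different from the paper's. The paper quotes this theorem from Kenig--Stein and in effect reproves it as the special case $\Omega_1=\cdots=\Omega_m\equiv1$ (so $s'=1$) of Theorem \ref{thm1}, by Hedberg's method: split the integral over $\|(x-y_1,\dots,x-y_m)\|<\sigma$ and $\geq\sigma$, bound the near part by $\sigma^{\alpha}\mathcal{M}_{s'}(\vec{f})(x)$ and the far part by $\sigma^{\alpha-n/p}\prod_i\|f_i\|_{L^{p_i}}$, optimize in $\sigma$ to reach the pointwise estimate $|\mathcal{T}_{\Omega,\alpha;m}(\vec{f})(x)|\lesssim[\mathcal{M}_{s'}(\vec{f})(x)]^{p/q}[\prod_i\|f_i\|_{L^{p_i}}]^{1-p/q}$, and then invoke Lerner et al.'s bounds for the multilinear maximal function $\mathcal{M}$ (Lemma \ref{multimax}). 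Your factorization $|\mathcal{I}_{\alpha;m}(\vec{f})(x)|\leq\prod_j I_{\alpha_j}(|f_j|)(x)$ is more elementary: it reduces everything to the classical linear Hardy--Littlewood--Sobolev theorem plus (weak) H\"older, and it does cover the full range of exponents in the statement. What it gives up is the multilinear structure: the product of linear Riesz potentials is a strictly cruder majorant, in the same way that $\prod_i Mf_i$ strictly dominates $\mathcal{M}(\vec{f})$, and it is precisely the finer pointwise control through $\mathcal{M}_{s'}$ that powers the rest of the paper --- the rough-kernel case $\vec{\Omega}\in L^s(\mathbf{S}^{n-1})$ with hypotheses only relating $s'$ to the $p_i$ (a factor-by-factor reduction would instead need the linear rough-kernel theory with per-factor conditions), the $L^{p}\log L$ endpoint of Theorem \ref{thm7}, and the Adams-type Morrey estimates of Theorem \ref{thm2}. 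So your argument is a clean, self-contained proof of this particular theorem, but it would not substitute for the paper's machinery in the later sections.
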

For simplicity, we denote
\begin{equation*}
|\vec{f}|:=(|f_1|,|f_2|,\dots,|f_m|)\quad \&\quad {(\vec{f})}^{s}:=(f_1^s,f_2^s,\dots,f_m^s),\quad s\geq1.
\end{equation*}
A simple calculation shows that $\mathcal{M}_{\alpha;m}(\vec{f})$ can be controlled pointwise by $\mathcal{I}_{\alpha;m}(|\vec{f}|)$ for any vector-valued function $\vec{f}$. Indeed, for any $0<\alpha<mn$, $x\in\mathbb R^n$ and $r>0$, we have
\begin{equation*}
\begin{split}
\mathcal{I}_{\alpha;m}(|\vec{f}|)(x)&\geq
\int_{|x-y_1|<r}\cdots\int_{|x-y_m|<r}\frac{|f_1(y_1)|\cdots |f_m(y_m)|}{\|(x-y_1,\dots,x-y_m)\|^{mn-\alpha}}\,dy_1\cdots dy_m\\
&\geq\frac{1}{(mr)^{mn-\alpha}}\prod_{i=1}^m\int_{|x-y_i|<r}\big|f_i(y_i)\big|\,dy_i\\
&=C_{m,n,\alpha}\prod_{i=1}^m\frac{1}{m(B(x,r))^{1-\alpha/{mn}}}\int_{|x-y_i|<r}\big|f_i(y_i)\big|\,dy_i,
\end{split}
\end{equation*}
where we have used the estimate below.
\begin{equation*}
\big\|(x-y_1,\dots,x-y_m)\big\|=\Big(\sum_{j=1}^m|x-y_j|^2\Big)^{1/2}\leq\sum_{j=1}^m|x-y_j|<mr.
\end{equation*}
Taking the supremum for all $r>0$ on both sides of the above inequality, we get
\begin{equation}\label{pointwise}
C_{m,n,\alpha}\cdot \mathcal{M}_{\alpha;m}(\vec{f})(x)\leq \mathcal{I}_{\alpha;m}(|\vec{f}|)(x),\quad \forall\; x\in\mathbb R^n.
\end{equation}
Thus the desired result follows immediately (However, the converse inequality does not hold in general, as in the linear case). In view of the above estimate, we can see that the conclusions of Theorem \ref{kenigstein} also hold for the multilinear fractional maximal operator $\mathcal{M}_{\alpha;m}$. Concerning the weighted norm inequalities for multilinear fractional type operators, including the multilinear fractional maximal operator $\mathcal{M}_{\alpha;m}$, the multilinear fractional integral $\mathcal{I}_{\alpha;m}$ and their commutators, see e.g. \cite{chen}, \cite{moen}, \cite{lee} and \cite{pradolini}.

Assume each $\Omega_i$ is a homogeneous function of degree zero on $\mathbb R^n$, i.e., $\Omega_i(\lambda x)=\Omega_i(x)$ for any $\lambda>0$ and $x\in \mathbb R^n$, and $\Omega_i\in L^{s}({\bf S}^{n-1})$($i=1,2,\dots,m$) for some $s\geq1$. Then for any $x\in \mathbb R^n$ and $0<\alpha<mn$, we define the multilinear fractional integral with homogeneous kernels as
\begin{equation*}
\mathcal{T}_{\Omega,\alpha;m}(\vec{f})(x):=
\int_{(\mathbb R^n)^m}\frac{\Omega_1(x-y_1)\cdots\Omega_m(x-y_m)}{\|(x-y_1,\dots,x-y_m)\|^{mn-\alpha}}f_1(y_1)\cdots f_m(y_m)\,dy_1\cdots dy_m,
\end{equation*}
and the corresponding multilinear fractional maximal operator given by
\begin{equation*}
\mathcal{M}_{\Omega,\alpha;m}(\vec{f})(x):=\sup_{r>0}\prod_{i=1}^m
\frac{1}{m(B(x,r))^{1-\alpha/{mn}}}\int_{|x-y_i|<r}\big|\Omega_i(x-y_i)f_i(y_i)\big|\,dy_i.
\end{equation*}
When $\Omega_i$($i=1,2,\dots,m$) satisfy the above condition, we simply write
\begin{equation*}
\vec{\Omega}:=(\Omega_1,\Omega_2,\dots,\Omega_m)\in L^s(\mathbf{S}^{n-1}),\quad s\geq1.
\end{equation*}

In this paper, we will study the boundedness properties of multilinear fractional integrals and fractional maximal
operators with homogeneous kernels on Lebesgue and Morrey spaces. Recall that the following strong type and weak type estimates for $\mathcal{M}_{\Omega,\alpha;m}$ were obtained by Chen and Xue (see the unweighted version of Theorem 2.5 in \cite{chen}).

\begin{thm}[\cite{chen}]
Let $2\leq m\in \mathbb{N}$ and $0<\alpha<mn$, $\vec{\Omega}=(\Omega_1,\Omega_2,\dots,\Omega_m)\in L^s(\mathbf{S}^{n-1})$, $1\leq s'\leq p_1,p_2,\dots,p_m<\infty$,
\begin{equation*}
\frac{\,1\,}{p}=\frac{1}{p_1}+\frac{1}{p_2}+\cdots+\frac{1}{p_m}\quad \&\quad \frac{\,1\,}{q}=\frac{\,1\,}{p}-\frac{\alpha}{\,n\,}.
\end{equation*}
$(i)$ If $p_i>s'$, $i=1,2,\dots,m$, then there is a constant $C>0$ independent of $\vec{f}$ such that
\begin{equation*}
\big\|\mathcal{M}_{\Omega,\alpha;m}(\vec{f})\big\|_{L^q}\leq C\prod_{i=1}^m\|f_i\|_{L^{p_i}}.
\end{equation*}

$(ii)$ If at least one $p_i$ equals $s'$, then there is a constant $C>0$ independent of $\vec{f}$ such that
\begin{equation*}
\big\|\mathcal{M}_{\Omega,\alpha;m}(\vec{f})\big\|_{L^{q,\infty}}\leq C\prod_{i=1}^m\|f_i\|_{L^{p_i}}.
\end{equation*}
\end{thm}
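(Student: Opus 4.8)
The plan is to run a Hedberg-type balancing argument directly on the product that defines $\mathcal{M}_{\Omega,\alpha;m}$, after producing two competing one-sided estimates for each factor. For fixed $x\in\mathbb{R}^n$ and $r>0$, write $A_i(x,r):=m(B(x,r))^{\frac{\alpha}{mn}-1}\int_{|x-y_i|<r}|\Omega_i(x-y_i)f_i(y_i)|\,dy_i$, so that $\mathcal{M}_{\Omega,\alpha;m}(\vec f)(x)=\sup_{r>0}\prod_{i=1}^m A_i(x,r)$. Using polar coordinates and the degree-zero homogeneity of $\Omega_i$ one gets $\int_{|x-y_i|<r}|\Omega_i(x-y_i)|^s\,dy_i=C\,r^n\|\Omega_i\|_{L^s(\mathbf{S}^{n-1})}^s$. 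Combined with Hölder's inequality with exponents $s$ and $s'$, this yields the \emph{local} bound $A_i(x,r)\le C\|\Omega_i\|_{L^s}\,r^{\alpha/m}\,[M(|f_i|^{s'})(x)]^{1/s'}$, where $M$ is the Hardy--Littlewood maximal operator. A second, \emph{global}, application of Hölder---with exponents $s,p_i,c$ where $1/c=1/s'-1/p_i$ when $p_i>s'$, and with exponents $s,s'$ when $p_i=s'$---gives $A_i(x,r)\le C\|\Omega_i\|_{L^s}\,\|f_i\|_{L^{p_i}}\,r^{\alpha/m-n/p_i}$, valid throughout the range $p_i\ge s'$. From now on the fixed constants $\|\Omega_i\|_{L^s(\mathbf{S}^{n-1})}$ are absorbed into $C$.

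Multiplying the local bounds over $i$ gives $\prod_i A_i(x,r)\le C\,G(x)\,r^{\alpha}$, an increasing function of $r$, where $G(x):=\prod_{i=1}^m[M(|f_i|^{s'})(x)]^{1/s'}$; multiplying the global bounds gives $\prod_i A_i(x,r)\le C\,P\,r^{-n/q}$, a decreasing function of $r$, where $P:=\prod_{i=1}^m\|f_i\|_{L^{p_i}}$ and we used $\sum_i(\alpha/m-n/p_i)=\alpha-n/p=-n/q$. Taking the minimum of the two bounds and optimizing the single parameter $r>0$ (the Hedberg balancing, which is elementary: the optimal $r$ is where the two sides coincide) I expect the pointwise estimate $\mathcal{M}_{\Omega,\alpha;m}(\vec f)(x)\le C\,G(x)^{1-\theta}P^{\theta}$, where $\theta=\alpha p/n\in(0,1)$ and $1-\theta=p/q$. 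This single inequality will drive both parts of the theorem.

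For part $(i)$, since $(1-\theta)q=p$, raising to the $q$th power and integrating yields $\|\mathcal{M}_{\Omega,\alpha;m}(\vec f)\|_{L^q}\le C\,P^{\theta}\|G\|_{L^{p}}^{1-\theta}$. I would then estimate $\|G\|_{L^p}$ by the generalized Hölder inequality with $\sum_i 1/p_i=1/p$, followed by the $L^{p_i/s'}$-boundedness of $M$ (legitimate precisely because $p_i>s'$, so $p_i/s'>1$), which gives $\|G\|_{L^p}\le C\prod_i\|f_i\|_{L^{p_i}}$. Collecting the matching powers of $\|f_i\|_{L^{p_i}}$ from $P^{\theta}$ and from $\|G\|_{L^p}^{1-\theta}$ recovers exactly $C\prod_i\|f_i\|_{L^{p_i}}$.

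Part $(ii)$ is where the work concentrates. The same pointwise bound gives $\|\mathcal{M}_{\Omega,\alpha;m}(\vec f)\|_{L^{q,\infty}}\le C\,P^{\theta}\|G\|_{L^{p,\infty}}^{1-\theta}$, via the scaling identity $\|G^{1-\theta}\|_{L^{q,\infty}}=\|G\|_{L^{(1-\theta)q,\infty}}^{1-\theta}=\|G\|_{L^{p,\infty}}^{1-\theta}$. Now $\|G\|_{L^{p,\infty}}$ must be controlled through a weak-type Hölder inequality $\|\prod_i g_i\|_{L^{p,\infty}}\le C\prod_i\|g_i\|_{L^{p_i,\infty}}$ with $g_i=[M(|f_i|^{s'})]^{1/s'}$: for the factors with $p_i>s'$ one again uses the strong $L^{p_i/s'}$-bound of $M$, while for the critical factor(s) with $p_i=s'$ one is forced to use only the weak-$(1,1)$ bound $\|M(|f_i|^{s'})\|_{L^{1,\infty}}\le C\|f_i\|_{L^{s'}}^{s'}$, which is exactly why only a weak-type conclusion can survive. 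The main obstacle I anticipate is this endpoint step: justifying the weak-type Hölder inequality for a product of several $L^{p_i,\infty}$ factors and tracking carefully how the powers interact with the weak quasi-norm, since $L^{p,\infty}$ is only quasi-normed and all constants must be shown to be independent of $\vec f$. By contrast, the Hedberg optimization and the strong-type integration are routine once the two one-sided bounds for $A_i(x,r)$ are in hand.
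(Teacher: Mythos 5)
Your proposal is correct, and at its core it is the same Hedberg balancing that drives the paper, but it reaches the maximal-operator theorem by a genuinely different route. The paper never estimates $\mathcal{M}_{\Omega,\alpha;m}$ directly: it proves the pointwise inequality
\begin{equation*}
\big|\mathcal{T}_{\Omega,\alpha;m}(\vec{f})(x)\big|\lesssim\Big[\mathcal{M}_{s'}(\vec{f})(x)\Big]^{p/q}\cdot\Big[\prod_{i=1}^m\|f_i\|_{L^{p_i}}\Big]^{1-p/q}
\end{equation*}
for the \emph{integral} operator by a near/far dyadic-shell decomposition at a threshold $\sigma$ (the proof of Theorem \ref{thm1}), where $\mathcal{M}_{s'}(\vec f)=[\mathcal{M}((\vec f)^{s'})]^{1/s'}$ is built from the Lerner--Ombrosi--P\'erez--Torres--Trujillo-Gonz\'alez multilinear maximal function; it then quotes the strong- and weak-type bounds for $\mathcal{M}$ from \cite{lerner} (Lemma \ref{multimax}), and finally transfers the conclusions to $\mathcal{M}_{\Omega,\alpha;m}$ through the pointwise domination $C_{m,n,\alpha}\,\mathcal{M}_{\Omega,\alpha;m}(\vec f)\leq\mathcal{T}_{|\Omega|,\alpha;m}(|\vec f|)$ established just before Theorem \ref{maximalthm1}. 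You instead run the local/global dichotomy factor by factor on $A_i(x,r)$ and optimize the supremum in $r$ directly, so the dyadic shells disappear, and your majorant is the larger product $\prod_i[M(|f_i|^{s'})]^{1/s'}$ rather than $\mathcal{M}_{s'}(\vec f)$, which is harmless in the unweighted setting. The step you flag as the main obstacle is not a real gap: the $m$-factor H\"older inequality for weak Lebesgue quasi-norms follows by iterating the two-factor case, which one proves from the inclusion $\{|fg|>\lambda\}\subset\{|f|>\lambda\mu^{-1}\}\cup\{|g|>\mu\}$ and optimization in $\mu$, with a constant depending only on the exponents (the paper itself records the mixed strong/weak case as part (2) of Lemma \ref{weakHold}; the fully weak case is equally classical). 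Combined with the weak $(1,1)$ bound for $M$ on the critical factors, this recovers for the product of maximal functions exactly what the paper imports from \cite{lerner} for the smaller operator $\mathcal{M}$. The trade-off: your argument is self-contained modulo the Hardy--Littlewood maximal theorem and needs no multilinear maximal-function theory, whereas the paper's detour through $\mathcal{T}_{|\Omega|,\alpha;m}$ lets the single pointwise bound \eqref{wanghua1} serve the integral and the maximal operator simultaneously, and phrasing the estimate through $\mathcal{M}$ is the formulation that survives in weighted and multiple-weight settings.
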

Based on this result, Chen and Xue further obtained the strong type estimate for $\mathcal{T}_{\Omega,\alpha;m}$(see the unweighted version of Theorem 2.6 in \cite{chen}).

\begin{thm}[\cite{chen}]\label{chen1}
Let $2\leq m\in \mathbb{N}$ and $0<\alpha<mn$, $\vec{\Omega}=(\Omega_1,\Omega_2,\dots,\Omega_m)\in L^s(\mathbf{S}^{n-1})$, $1\leq s'<p_1,p_2,\dots,p_m<\infty$,
\begin{equation*}
\frac{\,1\,}{p}=\frac{1}{p_1}+\frac{1}{p_2}+\cdots+\frac{1}{p_m} \quad \& \quad \frac{\,1\,}{q}=\frac{\,1\,}{p}-\frac{\alpha}{\,n\,}.
\end{equation*}
Then there is a constant $C>0$ independent of $\vec{f}$ such that
\begin{equation*}
\big\|\mathcal{T}_{\Omega,\alpha;m}(\vec{f})\big\|_{L^q}\leq C\prod_{i=1}^m\|f_i\|_{L^{p_i}}.
\end{equation*}
\end{thm}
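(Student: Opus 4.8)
The plan is to prove a pointwise Hedberg-type inequality that dominates $\mathcal{T}_{\Omega,\alpha;m}(\vec{f})$ by a product of sublinear maximal functions times the product of the norms $\|f_i\|_{L^{p_i}}$, and then to conclude by combining the Hardy--Littlewood maximal theorem with H\"older's inequality. Since the absolute value of the operator is bounded by the same operator with each $f_i$ and each $\Omega_i$ replaced by its absolute value, I may assume throughout that all $f_i\ge0$ and all $\Omega_i\ge0$. Writing $M_{s'}g:=\big(M(|g|^{s'})\big)^{1/s'}$ for the $s'$-maximal function, I would fix $x$ and a parameter $\delta>0$ (to be optimized later) and split the domain $(\mathbb R^n)^m$ according to the size of $\|(x-y_1,\dots,x-y_m)\|$, writing $\mathcal{T}_{\Omega,\alpha;m}(\vec{f})(x)=\mathrm{I}_\delta(x)+\mathrm{II}_\delta(x)$, where $\mathrm{I}_\delta$ is the integral over $\{\|(x-y_1,\dots,x-y_m)\|<\delta\}$ and $\mathrm{II}_\delta$ is the integral over its complement.

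For the local part $\mathrm{I}_\delta$, I would decompose inward over the dyadic shells $\{2^{-k-1}\delta\le\|(x-y_1,\dots,x-y_m)\|<2^{-k}\delta\}$, $k\ge0$. On each shell the kernel is comparable to $(2^{-k}\delta)^{\alpha-mn}$, and since every $|x-y_i|$ is bounded by $\|(x-y_1,\dots,x-y_m)\|<2^{-k}\delta$, the shell integral is dominated by the product of ball integrals $\prod_i\int_{|x-y_i|<2^{-k}\delta}\Omega_i(x-y_i)f_i(y_i)\,dy_i$. Applying H\"older's inequality with exponents $s$ and $s'$ in each variable separately, using $\Omega_i\in L^s(\mathbf S^{n-1})$ for the kernel factor and bounding the remaining factor by $M_{s'}f_i(x)$, each ball integral is at most $C\|\Omega_i\|_{L^s}(2^{-k}\delta)^{n}M_{s'}f_i(x)$. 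The radial powers then combine to $(2^{-k}\delta)^{\alpha}$, and because $\alpha>0$ the geometric series in $k$ converges, giving $\mathrm{I}_\delta(x)\le C\delta^{\alpha}\prod_{i=1}^m M_{s'}f_i(x)$.

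The global part $\mathrm{II}_\delta$ is where I expect the main obstacle to lie: the region $\{\|(x-y_1,\dots,x-y_m)\|\ge\delta\}$ is not a product set, so the variables cannot be separated as directly as in the classical linear Hedberg argument. I would again use dyadic shells, now outward, over $\{2^{k}\delta\le\|(x-y_1,\dots,x-y_m)\|<2^{k+1}\delta\}$, $k\ge0$, on which the kernel is comparable to $(2^{k}\delta)^{\alpha-mn}$ and each $|x-y_i|<2^{k+1}\delta$, so the shell integral is once more dominated by a product of ball integrals. In each variable I would then apply H\"older's inequality with the three exponents $s,\,p_i,\,r_i$ satisfying $1/s+1/p_i+1/r_i=1$, where $r_i$ is positive precisely because $p_i>s'$; this estimates the $\Omega_i$-factor by $\|\Omega_i\|_{L^s}$, the $f_i$-factor by $\|f_i\|_{L^{p_i}}$, and the leftover factor by the measure of the ball. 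Collecting the radial powers over all $i$ produces the exponent $\alpha-n/p$ on $2^{k}\delta$, and the outward geometric series converges exactly when $\alpha-n/p<0$, i.e. when $1/p>\alpha/n$, which is guaranteed by $1/q=1/p-\alpha/n>0$. This yields $\mathrm{II}_\delta(x)\le C\delta^{\alpha-n/p}\prod_{i=1}^m\|f_i\|_{L^{p_i}}$.

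Combining the two estimates and optimizing in $\delta$ by balancing the terms (choosing $\delta^{n/p}$ comparable to $\prod_i\|f_i\|_{L^{p_i}}\big/\prod_i M_{s'}f_i(x)$) gives, using $1-\alpha p/n=p/q$, the pointwise bound
\begin{equation*}
\mathcal{T}_{\Omega,\alpha;m}(\vec{f})(x)\le C\Big(\prod_{i=1}^{m}M_{s'}f_i(x)\Big)^{p/q}\Big(\prod_{i=1}^{m}\|f_i\|_{L^{p_i}}\Big)^{\alpha p/n}.
\end{equation*}
Finally I would raise to the power $q$, integrate in $x$, apply H\"older's inequality with the exponents $p_i/p$ (which satisfy $\sum_i p/p_i=1$) to separate the variables, and invoke the Hardy--Littlewood maximal theorem in the form $\|M_{s'}f_i\|_{L^{p_i}}=\|M(|f_i|^{s'})\|_{L^{p_i/s'}}^{1/s'}\le C\|f_i\|_{L^{p_i}}$, valid since $p_i/s'>1$. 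A bookkeeping check of exponents, via $p+\alpha pq/n=q$, then produces $\|\mathcal{T}_{\Omega,\alpha;m}(\vec{f})\|_{L^q}\le C\prod_{i=1}^m\|f_i\|_{L^{p_i}}$. The genuinely multilinear ingredient is the global estimate: the per-variable three-exponent H\"older splitting replaces the single H\"older step of the linear argument, and it is there that the hypotheses $\vec{\Omega}\in L^s(\mathbf S^{n-1})$ and $p_i>s'$ are used together.
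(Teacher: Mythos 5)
Your proposal is correct, and its skeleton coincides with the paper's own proof of Theorem \ref{chen1} (given there as part $(i)$ of Theorem \ref{thm1}): the same Hedberg splitting at a radius $\delta$, the same inward and outward dyadic shells, the same $(s,s')$ H\"older step on the local part, the same per-variable H\"older step yielding the exponent $n/{p_i'}$ on the global part, and the same optimization in $\delta$. (Your three-exponent splitting $1/s+1/{p_i}+1/{r_i}=1$ in the global part is arithmetically identical to the paper's route via the embedding $\|\Omega_i\|_{L^{p_i'}(\mathbf{S}^{n-1})}\leq C\|\Omega_i\|_{L^s(\mathbf{S}^{n-1})}$, which is itself H\"older on the sphere; both produce $(2^k\delta)^{n/{p_i'}}$.) Where you genuinely diverge is in which maximal function controls the local part and, consequently, in the key lemma that closes the argument. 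The paper keeps the common radius in all $m$ ball averages and bounds $\mathrm{I}$ by $\sigma^{\alpha}\mathcal{M}_{s'}(\vec f)(x)$, where $\mathcal{M}_{s'}(\vec f)(x)=\big[\mathcal{M}\big((\vec f)^{s'}\big)(x)\big]^{1/{s'}}$ is built from the multilinear maximal function $\mathcal{M}$ of \cite{lerner}; the conclusion then rests on the imported multilinear maximal theorem (Lemma \ref{multimax}). You instead relax each average to its own supremum, getting the larger bound $\delta^{\alpha}\prod_{i}M_{s'}f_i(x)$, and finish using only H\"older's inequality with exponents $p_i/p$ and the classical Hardy--Littlewood maximal theorem for $M(|f_i|^{s'})$ on $L^{p_i/{s'}}(\mathbb R^n)$ with $p_i/{s'}>1$; your exponent bookkeeping ($p+\alpha pq/n=q$) is correct. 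Your route is thus more elementary and self-contained for the strong-type statement at hand. What the paper's sharper pointwise bound \eqref{wanghua1} buys is reusability: since $\mathcal{M}(\vec f)\leq\prod_i Mf_i$ pointwise, and $\mathcal{M}$ retains a weak-type bound when some $p_i$ equals $s'$ (part (2) of Lemma \ref{multimax}), the single inequality \eqref{wanghua1} also delivers the weak-type endpoint of Theorem \ref{thm1}$(ii)$, the $L\log L$ estimate of Theorem \ref{thm7}, and (after the obvious modification) the Morrey-space results; with your product bound, the endpoint cases would require an additional device, e.g.\ an $m$-fold H\"older inequality for weak Lebesgue spaces in the spirit of Lemma \ref{weakHold}.
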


We will give an alternative proof of Theorem \ref{chen1}, following the basic idea due to Hedberg in \cite{hed}. We will establish the corresponding weak type estimate for $\mathcal{T}_{\Omega,\alpha;m}$ when $\vec{\Omega}=(\Omega_1,\Omega_2,\dots,\Omega_m)\in L^s(\mathbf{S}^{n-1})$ with $s'\leq p_1,p_2,\dots,p_m<\infty$ and $1/q=1/{p_1}+1/{p_2}+\cdots+1/{p_m}-{\alpha}/n$. We can prove the same norm inequalities involving multilinear fractional maximal operator $\mathcal{M}_{\Omega,\alpha;m}$. Besides Lebesgue spaces, the boundedness of both operators $\mathcal{T}_{\Omega,\alpha;m}$ and $\mathcal{M}_{\Omega,\alpha;m}$ on Morrey spaces are obtained. Moreover, some new estimates in the limiting cases are also discussed. Finally, we will apply our main results to several well-known inequalities on the Euclidean space $\mathbb R^n$ such as Hardy--Littlewood--Sobolev and Olsen-type inequalities.

\begin{rem}
For the linear case $m=1$, a few historical remarks are given below.
\begin{itemize}
  \item In \cite{muckenhoupt1}, Muckenhoupt and Wheeden proved that $T_{\Omega,\alpha}=\mathcal{T}_{\Omega,\alpha;1}$ is bounded from $L^p(\mathbb R^n)$ into $L^q(\mathbb R^n)$, when $\Omega\in L^s(\mathbf{S}^{n-1})$ with $s>p'$, $1<p<n/{\alpha}$ and $1/q=1/p-{\alpha}/n$ (see also \cite{ding1,lu} for the weighted version).
  \item In \cite{cha},  Chanillo, Watson and Wheeden showed that $T_{\Omega,\alpha}$ is of weak type $(1,n/{(n-\alpha)})$ when $\Omega\in L^s(\mathbf{S}^{n-1})$ with $s\geq n/{(n-\alpha)}$.
  \item Moreover, in \cite{ding2}, Ding and Lu considered the critical case $p=n/{\alpha}$, and proved that if $\Omega$ satisfies certain smoothness condition $\mathcal{D}_{s}$ with $s>n/{(n-\alpha)}$, then $T_{\Omega,\alpha}$ is bounded from $L^{p}(\mathbb R^n)$ into $\mathrm{BMO}(\mathbb R^n)$. For the critical index $p=n/{\alpha}$, we can show that this result also holds for $s=n/{(n-\alpha)}$(see \cite{wang2}).
\end{itemize}
\end{rem}

Throughout this paper, the letter $C$ will stand for a positive constant, not necessarily the same at each occurrence but independent of the essential variables. The symbol $\mathbf{X}\lesssim \mathbf{Y}$ means that there is a constant $C>0$ such that $\mathbf{X}\leq C\mathbf{Y}$.
We will also use the notation $p'=p/{(p-1)}$ for $1<p<\infty$, and $p'=1$ for $p=\infty$.

\section{Boundedness of $\mathcal{T}_{\Omega,\alpha;m}$ and $\mathcal{M}_{\Omega,\alpha;m}$ on Lebesgue spaces}
\label{sec2}
In this section, we will establish boundedness properties of multilinear fractional integral operator and fractional maximal operator on Lebesgue spaces. As pointed out in Section \ref{sec13}, the multi(sub)linear maximal function $\mathcal{M}$ is defined by
\begin{equation}\label{wangM}
\mathcal{M}(\vec{f})(x):=\sup_{r>0}
\prod_{i=1}^m\frac{1}{m(B(x,r))}\int_{B(x,r)}\big|f_i(y_i)\big|\,dy_i,\quad\forall~x\in\mathbb R^n.
\end{equation}
It can be shown that this operator is strictly smaller than the $m$-fold product of $M$ (see \cite{lerner}), where $M$ is the usual Hardy--Littlewood maximal operator.
In addition, it is obvious that for any $\tau>0$,
\begin{equation}\label{prod1}
\prod_{i=1}^m\big(A_i\big)^{\tau}=\Big(\prod_{i=1}^m A_i\Big)^{\tau}.
\end{equation}

\begin{thm}\label{thm1}
Let $\vec{\Omega}=(\Omega_1,\Omega_2,\dots,\Omega_m)\in L^s(\mathbf{S}^{n-1})$ with $1<s\leq\infty$. Let $0<\alpha<mn$, $s'\leq p_1,p_2,\dots,p_m<\infty$,
\begin{equation*}
1/p=1/{p_1}+1/{p_2}+\cdots+1/{p_m}\quad \mathrm{and}\quad 1/q=1/p-\alpha/n.
\end{equation*}

$(i)$ If $s'<p_1,p_2,\dots,p_m<\infty$, then for every $p\in(s'/m,n/{\alpha})$, the following inequality
\begin{equation*}
\big\|\mathcal{T}_{\Omega,\alpha;m}(\vec{f})\big\|_{L^q}\lesssim\prod_{i=1}^m\big\|f_i\big\|_{L^{p_i}}
\end{equation*}
holds for all $\vec{f}=(f_1,f_2,\dots,f_m)\in L^{p_1}(\mathbb R^n)\times L^{p_2}(\mathbb R^n)\times \cdots\times L^{p_m}(\mathbb R^n)$.

$(ii)$ If $s'=\min\{p_1,p_2,\dots,p_m\}$, then for every $p\in[s'/m,n/{\alpha})$, the following inequality
\begin{equation*}
\big\|\mathcal{T}_{\Omega,\alpha;m}(\vec{f})\big\|_{L^{q,\infty}}\lesssim\prod_{i=1}^m\big\|f_i\big\|_{L^{p_i}}
\end{equation*}
holds for all $\vec{f}=(f_1,f_2,\dots,f_m)\in L^{p_1}(\mathbb R^n)\times L^{p_2}(\mathbb R^n)\times \cdots\times L^{p_m}(\mathbb R^n)$.
\end{thm}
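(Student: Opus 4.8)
The plan is to reduce both parts of the theorem to a single pointwise Hedberg-type inequality, and then feed that inequality into a strong-type argument for $(i)$ and a weak-type argument for $(ii)$. Fix $x\in\mathbb R^n$, abbreviate $\rho:=\|(x-y_1,\dots,x-y_m)\|$, and split the domain $(\mathbb R^n)^m$ into the near region $\{\rho<\delta\}$ and the far region $\{\rho\geq\delta\}$, where $\delta>0$ is a free parameter to be optimized at the end. The one geometric fact I would exploit repeatedly is that $|x-y_i|\leq\rho$ for every $i$, so that $\{\rho<R\}\subseteq\prod_{i=1}^m B(x,R)$; this containment is precisely what decouples the kernel, whose variables are entangled through $\rho$, into a product over the individual balls $B(x,R)$.

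For the near region I would decompose dyadically inward over the shells $\{2^{-k-1}\delta\leq\rho<2^{-k}\delta\}$, $k\geq0$. On each shell $\rho^{\alpha-mn}\lesssim(2^{-k}\delta)^{\alpha-mn}$ since $\alpha-mn<0$, and the containment bounds the shell integral by a product $\prod_i\int_{B(x,2^{-k}\delta)}|\Omega_i(x-y_i)f_i(y_i)|\,dy_i$. Applying Hölder's inequality with exponents $s,s'$ on each ball and using that each $\Omega_i$ is homogeneous of degree zero, so that $\int_{B(x,r)}|\Omega_i(x-y)|^s\,dy=\|\Omega_i\|_{L^s(\mathbf S^{n-1})}^s\,r^n/n$, yields $\frac{1}{m(B(x,r))}\int_{B(x,r)}|\Omega_i f_i|\lesssim\big(M(|f_i|^{s'})(x)\big)^{1/s'}$, where $M$ is the Hardy--Littlewood maximal operator. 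Summing the geometric series in $k$, which converges exactly because $\alpha>0$, produces the near estimate $\lesssim\delta^{\alpha}\,G(x)$ with $G(x):=\prod_{i=1}^m\big(M(|f_i|^{s'})(x)\big)^{1/s'}$. For the far region I would decompose dyadically outward over $\{2^{k}\delta\leq\rho<2^{k+1}\delta\}$, again pass to $\prod_i\int_{B(x,2^{k+1}\delta)}|\Omega_i f_i|$, but now apply Hölder with exponents $p_i,p_i'$: since $p_i\geq s'$ we have $p_i'\leq s$, hence $\Omega_i\in L^{p_i'}(\mathbf S^{n-1})$ and $\int_{B(x,R)}|\Omega_i(x-y)|^{p_i'}\,dy\lesssim R^n$, giving $\int_{B(x,R)}|\Omega_i f_i|\lesssim R^{n/p_i'}\|f_i\|_{L^{p_i}}$. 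Because $\sum_i n/p_i'=mn-n/p$, the far sum is controlled by $\sum_{k\geq0}(2^k\delta)^{\alpha-n/p}B$ with $B:=\prod_{i=1}^m\|f_i\|_{L^{p_i}}$, and this geometric series converges exactly because $p<n/\alpha$, yielding $\lesssim\delta^{\alpha-n/p}B$. Balancing the two contributions by taking $\delta^{n/p}=B/G(x)$ and using $1-\alpha p/n=p/q$ gives the Hedberg inequality $|\mathcal T_{\Omega,\alpha;m}(\vec f)(x)|\lesssim G(x)^{p/q}\,B^{\,1-p/q}$.

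For part $(i)$ I would raise this pointwise bound to the power $q$ and integrate, obtaining $\|\mathcal T_{\Omega,\alpha;m}(\vec f)\|_{L^q}^q\lesssim B^{\,q-p}\int_{\mathbb R^n}G(x)^p\,dx$, and then apply Hölder with the exponents $p_i/p$ (whose reciprocals sum to $1$) to separate the factors of $G$. Each factor contributes $\|M(|f_i|^{s'})\|_{L^{p_i/s'}}^{p/s'}$, which is dominated by $\|f_i\|_{L^{p_i}}^{p}$ thanks to the strong $(p_i/s',p_i/s')$ boundedness of $M$; here the strict inequality $p_i>s'$ is exactly what guarantees $p_i/s'>1$. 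This gives $\int G^p\lesssim B^p$ and hence $\|\mathcal T_{\Omega,\alpha;m}(\vec f)\|_{L^q}\lesssim B$, which is $(i)$.

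For part $(ii)$, when $s'=\min_i p_i$ at least one index forces $M$ onto $L^1$, where only the weak $(1,1)$ bound survives, so the corresponding factor $\big(M(|f_i|^{s'})\big)^{1/s'}$ lies in $L^{s',\infty}$ rather than $L^{s'}$. Combining the strong factors with the weak ones through Hölder's inequality in the Lorentz scale, $L^{p_1,\infty}\cdots L^{p_m,\infty}\hookrightarrow L^{p,\infty}$, I would deduce $\|G\|_{L^{p,\infty}}\lesssim B$. Inserting this into the Hedberg inequality and estimating the distribution function of $\mathcal T_{\Omega,\alpha;m}(\vec f)$ by that of $G$ then gives $\|\mathcal T_{\Omega,\alpha;m}(\vec f)\|_{L^{q,\infty}}\lesssim B$, which is $(ii)$. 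The main obstacle I anticipate is the far-region estimate: the coupling of all $m$ variables through $\rho$ must be untangled via the containment in a product of balls together with the $L^{p_i'}$ control of the rough kernels $\Omega_i$, and the convergence of the outward geometric series hinges delicately on $p<n/\alpha$. The secondary obstacle is the endpoint case, where the passage through the weak-type $(1,1)$ bound and the Lorentz Hölder inequality must be handled carefully to land in $L^{q,\infty}$ rather than $L^q$.
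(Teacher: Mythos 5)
Your proposal is correct, and its skeleton is exactly the paper's: the same Hedberg splitting at a free radius, the same inward/outward dyadic shells decoupled by the containment $\{\rho<R\}\subseteq\prod_i B(x,R)$, the same two H\"{o}lder applications (exponents $s,s'$ near, $p_i,p_i'$ far, the latter convergent precisely because $p<n/\alpha$), and the same balancing yielding the pointwise bound with exponents $p/q$ and $1-p/q$. Where you genuinely diverge is in the object that dominates the near part and in the lemma used to close the argument. The paper keeps the \emph{multilinear} maximal quantity $\mathcal{M}_{s'}(\vec f)(x)=\big[\mathcal{M}(f_1^{s'},\dots,f_m^{s'})(x)\big]^{1/s'}$ (a single supremum over $r$ of a product of averages) and then invokes the Lerner--Ombrosi--P\'erez--Torres--Trujillo-Gonz\'alez bounds for $\mathcal{M}$ (the paper's Lemma on $\mathcal{M}$) for both the strong and weak conclusions. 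You instead dominate by the larger product $G(x)=\prod_i\big(M(|f_i|^{s'})(x)\big)^{1/s'}$ of \emph{linear} Hardy--Littlewood maximal functions, and close part $(i)$ with H\"{o}lder plus the strong $(p_i/s',p_i/s')$ bounds for $M$ (here $p_i>s'$ is what makes $p_i/s'>1$), and part $(ii)$ with the weak $(1,1)$ bound for $M$ at the endpoint indices, the embedding $L^{p_j}\hookrightarrow L^{p_j,\infty}$ at the others, and H\"{o}lder in the weak/Lorentz scale $L^{p_1,\infty}\cdots L^{p_m,\infty}\hookrightarrow L^{p,\infty}$; the final distribution-function computation you sketch does land in $L^{q,\infty}$. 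Both routes are valid. Yours is more self-contained: it re-derives, for the special case needed, exactly the content of the cited multilinear maximal lemma (whose unweighted proof in this range is the same H\"{o}lder argument, since $\mathcal{M}\leq\prod_i M(f_i)$ pointwise). The paper's route keeps the sharper intermediate operator $\mathcal{M}$, which is the natural object if one wants to push on to multiple-weight ($A_{\vec P}$) estimates, and makes the deduction shorter once that lemma is granted; for the unweighted theorem the two arguments are equivalent in strength, since the loss incurred by replacing the supremum of products with the product of suprema is invisible after taking norms.
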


Before giving the proof of Theorem \ref{thm1}, we need some facts on the boundedness properties of $\mathcal{M}$ (see \cite{lerner}, for instance).
\begin{lem}[\cite{lerner}]\label{multimax}
Let $1\leq p_1,p_2,\dots,p_m<\infty$, $0<p<\infty$ and $1/p=1/{p_1}+1/{p_2}+\cdots+1/{p_m}$.
\begin{enumerate}
  \item If $1<p_i<\infty$, $i=1,2,\dots,m$, then the inequality
  \begin{equation*}
    \big\|\mathcal{M}(\vec{f})\big\|_{L^p}\leq C\prod_{i=1}^m\big\|f_i\big\|_{L^{p_i}}
  \end{equation*}
  holds for every $\vec{f}=(f_1,f_2,\dots,f_m)\in L^{p_1}(\mathbb R^n)\times L^{p_2}(\mathbb R^n)\times \cdots\times L^{p_m}(\mathbb R^n)$.
  \item If $\min\{p_1,p_2,\dots,p_m\}=1$, then the inequality
\begin{equation*}
\big\|\mathcal{M}(\vec{f})\big\|_{L^{p,\infty}}\leq C\prod_{i=1}^m\big\|f_i\big\|_{L^{p_i}}
\end{equation*}
holds for every $\vec{f}=(f_1,f_2,\dots,f_m)\in L^{p_1}(\mathbb R^n)\times L^{p_2}(\mathbb R^n)\times \cdots\times L^{p_m}(\mathbb R^n)$.
\end{enumerate}
\end{lem}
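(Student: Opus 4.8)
The plan is to dominate $\mathcal{M}(\vec f)$ pointwise by the product of the Hardy--Littlewood maximal functions of the individual entries and then reduce everything to the classical maximal theorem. First I would record the pointwise estimate
\begin{equation*}
\mathcal{M}(\vec f)(x)\le\prod_{i=1}^m (Mf_i)(x),\qquad\forall\,x\in\mathbb R^n,
\end{equation*}
where $M$ is the usual Hardy--Littlewood maximal operator. This is immediate from \eqref{wangM}: for each fixed $r>0$ every factor obeys $\frac{1}{m(B(x,r))}\int_{B(x,r)}|f_i|\le (Mf_i)(x)$, so the product over $i$ is bounded by $\prod_{i=1}^m (Mf_i)(x)$ uniformly in $r$, and taking the supremum over $r$ preserves the bound; this is the quantitative form of the remark that $\mathcal{M}$ is controlled by the $m$-fold product of $M$.

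For part (1), assume $1<p_i<\infty$ for every $i$. The generalized H\"older inequality in $L^p$ with $1/p=\sum_{i=1}^m 1/p_i$, followed by the $L^{p_i}$-boundedness of $M$ (valid precisely because $p_i>1$), gives
\begin{equation*}
\big\|\mathcal{M}(\vec f)\big\|_{L^p}\le\Big\|\prod_{i=1}^m Mf_i\Big\|_{L^p}\le\prod_{i=1}^m\big\|Mf_i\big\|_{L^{p_i}}\le C\prod_{i=1}^m\big\|f_i\big\|_{L^{p_i}},
\end{equation*}
which is the desired strong-type estimate.

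The delicate case is part (2), which I expect to be the main obstacle: if $\min_i p_i=1$ then $M$ is no longer bounded on the corresponding $L^1$ but only of weak type $(1,1)$, so the strong H\"older argument breaks down. To repair it I would invoke the H\"older inequality for weak Lebesgue spaces,
\begin{equation*}
\Big\|\prod_{i=1}^m g_i\Big\|_{L^{p,\infty}}\le C\prod_{i=1}^m\big\|g_i\big\|_{L^{p_i,\infty}},\qquad \frac1p=\sum_{i=1}^m\frac1{p_i},
\end{equation*}
applied to $g_i=Mf_i$: for each index with $p_i=1$ the weak $(1,1)$ bound gives $\|Mf_i\|_{L^{1,\infty}}\le C\|f_i\|_{L^1}$, while for each index with $p_i>1$ the strong bound together with Chebyshev's inequality $\|g\|_{L^{p_i,\infty}}\le\|g\|_{L^{p_i}}$ gives $\|Mf_i\|_{L^{p_i,\infty}}\le C\|f_i\|_{L^{p_i}}$; combining these yields $\|\mathcal{M}(\vec f)\|_{L^{p,\infty}}\le C\prod_i\|f_i\|_{L^{p_i}}$. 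The weak-type H\"older inequality is the only nontrivial ingredient, and I would prove it by level-set splitting: from $\{|fg|>\lambda\}\subseteq\{|f|>s\}\cup\{|g|>\lambda/s\}$ one obtains $m(\{|fg|>\lambda\})\le \|f\|_{L^{p_1,\infty}}^{p_1}s^{-p_1}+\|g\|_{L^{p_2,\infty}}^{p_2}(\lambda/s)^{-p_2}$, and optimizing in $s$ gives the two-factor case with $1/p=1/p_1+1/p_2$, the general case following by induction. Since the pointwise domination reduces the whole lemma to the already-classical mapping properties of $M$, this weak-type H\"older estimate, rather than anything specific to $\mathcal{M}$, is where the real work lies.
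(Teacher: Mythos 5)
Your proof is correct, but note that the paper never proves Lemma \ref{multimax} at all: it is quoted from Lerner et al.\ \cite{lerner}, so there is no in-paper proof to match against. What your argument does coincide with is the scheme the authors use for the Morrey-space analogue, Lemma \ref{multimaxmorrey}: the same pointwise domination $\mathcal{M}(\vec{f})(x)\leq\prod_{i=1}^m Mf_i(x)$ (their estimate \eqref{wang830729}), generalized H\"older plus the strong $(p_i,p_i)$ bounds for part (1), and a weak-type H\"older inequality combined with the weak $(1,1)$ bound of $M$ for the endpoint part (2). Two remarks on the details. First, your H\"older inequality with \emph{all} factors in weak spaces, $\|\prod_i g_i\|_{L^{p,\infty}}\leq C\prod_i\|g_i\|_{L^{p_i,\infty}}$, is slightly more general than the version the paper records (Lemma \ref{weakHold}(2) keeps one factor in a strong space); your level-set proof with optimization in $s$ is correct, and the induction over factors is legitimate even though the intermediate exponents $r_k$ with $1/r_k=\sum_{i\leq k}1/p_i$ may fall below $1$, since the splitting $\{|fg|>\lambda\}\subseteq\{|f|>s\}\cup\{|g|>\lambda/s\}$ never invokes the triangle inequality. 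Second, be aware of what the product-domination route gives up: it settles the unweighted statement completely, but it is genuinely cruder than the original proof in \cite{lerner}, whose point --- echoed in the paper right after \eqref{wangM} --- is that $\mathcal{M}$ is \emph{strictly smaller} than the $m$-fold product of $M$; that distinction is what drives the multiple $A_{\vec{P}}$ weight theory, and your argument could not recover those weighted results, though nothing of the sort is required for the lemma as stated.
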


\begin{proof}[Proof of Theorem \ref{thm1}]
We adopt some basic ideas due to Hedberg in \cite{hed}. For given $\vec{f}=(f_1,\dots,f_m)\in L^{p_1}(\mathbb R^n)\times \cdots\times L^{p_m}(\mathbb R^n)$ with $s'/m\leq p<n/{\alpha}$, we will decomposes the integral defining $\mathcal{T}_{\Omega,\alpha;m}$ into two parts. Using the definition of the multilinear maximal operator $\mathcal{M}$ to the first part and applying the H\"{o}lder inequality to the second. Then we choose a positive number so that both terms are the same size. Arguing as in \cite{hed}, we will prove the following result.
\begin{equation}\label{wanghua1}
\begin{split}
\big|\mathcal{T}_{\Omega,\alpha;m}(\vec{f})(x)\big|\lesssim
\Big[\mathcal{M}_{s'}(\vec{f})(x)\Big]^{p/q}\cdot\Big[\prod_{i=1}^m\|f_i\|_{L^{p_i}}\Big]^{1-p/q},
\quad \forall~x\in\mathbb R^n.
\end{split}
\end{equation}
Let $\vec{f}\in L^{p_1}(\mathbb R^n)\times \cdots\times L^{p_m}(\mathbb R^n)$ with $s'/m\leq p<n/{\alpha}$. In order to prove \eqref{wanghua1}, one can write
\begin{equation*}
\begin{split}
&\big|\mathcal{T}_{\Omega,\alpha;m}(\vec{f})(x)\big|\leq
\int_{(\mathbb R^n)^m}\frac{|\Omega_1(x-y_1)\cdots
\Omega_m(x-y_m)|}{\|(x-y_1,\dots,x-y_m)\|^{mn-\alpha}}
\big|f_1(y_1)\cdots f_m(y_m)\big|\,dy_1\cdots dy_m\\
&=\int_{\|(x-y_1,\dots,x-y_m)\|<\sigma}\frac{|\Omega_1(x-y_1)\cdots\Omega_m(x-y_m)|}{\|(x-y_1,\dots,x-y_m)\|^{mn-\alpha}}
\big|f_1(y_1)\cdots f_m(y_m)\big|\,dy_1\cdots dy_m\\
&+\int_{\|(x-y_1,\dots,x-y_m)\|\geq\sigma}\frac{|\Omega_1(x-y_1)\cdots\Omega_m(x-y_m)|}{\|(x-y_1,\dots,x-y_m)\|^{mn-\alpha}}
\big|f_1(y_1)\cdots f_m(y_m)\big|\,dy_1\cdots dy_m\\
&:=\mathrm{I+II},
\end{split}
\end{equation*}
where $\sigma>0$ is a positive constant to be determined later. To estimate the first term $\mathrm{I}$, for any $x\in\mathbb{R}^n$,
we observe that
\begin{equation}\label{geq}
\big\|(x-y_1,\dots,x-y_m)\big\|=\Big(\sum_{j=1}^m|x-y_j|^2\Big)^{1/2}\geq |x-y_i|,\quad i=1,2,\dots,m.
\end{equation}
Hence,
\begin{equation*}
\begin{split}
\mathrm{I}&=\sum_{j=1}^\infty\int_{2^{-j}\sigma\leq\|(x-y_1,\dots,x-y_m)\|<2^{-j+1}\sigma}
\frac{|\Omega_1(x-y_1)\cdots\Omega_m(x-y_m)|}{\|(x-y_1,\dots,x-y_m)\|^{mn-\alpha}}
\prod_{i=1}^m|f_i(y_i)|\,dy_i\\
&\leq\sum_{j=1}^\infty\frac{1}{(2^{-j}\sigma)^{mn-\alpha}}\int_{2^{-j}\sigma\leq\|(x-y_1,\dots,x-y_m)\|<2^{-j+1}\sigma}
\prod_{i=1}^m|\Omega_i(x-y_i)|\cdot|f_i(y_i)|\,dy_i\\
&\leq\sum_{j=1}^\infty\frac{1}{(2^{-j}\sigma)^{mn-\alpha}}\prod_{i=1}^m\int_{|x-y_i|<2^{-j+1}\sigma}|\Omega_i(x-y_i)|\cdot|f_i(y_i)|\,dy_i.
\end{split}
\end{equation*}
For any $\mathcal{R}>0$ and $\Omega\in L^s(\mathbf{S}^{n-1})$, we first establish the following estimate, which will be often used in the sequel.
\begin{equation}\label{omega88}
\bigg(\int_{|x-y_i|<\mathcal{R}}|\Omega(x-y_i)|^s\,dy_i\bigg)^{1/s}
\leq \Big(\frac{\,1\,}{n}\Big)^{1/s}\|\Omega\|_{L^s(\mathbf{S}^{n-1})}\mathcal{R}^{n/s},\quad x\in\mathbb R^n.
\end{equation}
To this end, using polar coordinates, we obtain that for any given $x\in\mathbb R^n$,
\begin{equation*}
\begin{split}
\bigg(\int_{|x-y_i|<\mathcal{R}}|\Omega(x-y_i)|^s\,dy_i\bigg)^{1/s}
&=\bigg(\int_{|y_i|<\mathcal{R}}|\Omega(y_i)|^{s}\,dy_i\bigg)^{1/s}\\
&=\bigg(\int_0^{\mathcal{R}}\int_{\mathbf{S}^{n-1}}|\Omega(y'_i)|^{s}\varrho^{n-1}\,d\sigma(y'_i)d\varrho\bigg)^{1/s}\\
&\leq \Big(\frac{\,1\,}{n}\Big)^{1/s}\|\Omega\|_{L^s(\mathbf{S}^{n-1})}\mathcal{R}^{n/s},
\end{split}
\end{equation*}
as desired. Here $y'_i=y_i/{|y_i|}$ for any $\mathbb R^n\ni y_i\neq0$. A combination of H\"{o}lder's inequality and \eqref{omega88} gives us that for each fixed $j\geq1$,
\begin{equation*}
\begin{split}
&\int_{|x-y_i|<2^{-j+1}\sigma}|\Omega_i(x-y_i)|\cdot|f_i(y_i)|\,dy_i\\
&\leq\bigg(\int_{|x-y_i|<2^{-j+1}\sigma}|\Omega_i(x-y_i)|^s\,dy_i\bigg)^{1/s}
\bigg(\int_{|x-y_i|<2^{-j+1}\sigma}|f_i(y_i)|^{s'}\,dy_i\bigg)^{1/{s'}}\\
&\lesssim\|\Omega_i\|_{L^s(\mathbf{S}^{n-1})}\big(2^{-j+1}\sigma\big)^{n/s}
\bigg(\int_{B(x,2^{-j+1}\sigma)}|f_i(y_i)|^{s'}\,dy_i\bigg)^{1/{s'}}\\
&\lesssim\|\Omega_i\|_{L^s(\mathbf{S}^{n-1})}\big(2^{-j+1}\sigma\big)^{n}
\times\bigg(\frac{1}{m(B(x,2^{-j+1}\sigma))}\int_{B(x,2^{-j+1}\sigma)}|f_i(y_i)|^{s'}\,dy_i\bigg)^{1/{s'}}.
\end{split}
\end{equation*}
Denote
\begin{equation*}
\|\vec{\Omega}\|_{L^s(\mathbf{S}^{n-1})}:=\prod_{i=1}^m\|\Omega_i\|_{L^s(\mathbf{S}^{n-1})}.
\end{equation*}
Note that $\alpha>0$. Hence
\begin{equation*}
\begin{split}
\mathrm{I}&\lesssim\|\vec{\Omega}\|_{L^s(\mathbf{S}^{n-1})}\sum_{j=1}^\infty\frac{1}{(2^{-j}\sigma)^{mn-\alpha}}\prod_{i=1}^m
\big(2^{-j+1}\sigma\big)^{n}\\
&\times\bigg(\frac{1}{m(B(x,2^{-j+1}\sigma))}\int_{B(x,2^{-j+1}\sigma)}|f_i(y_i)|^{s'}\,dy_i\bigg)^{1/{s'}}\\
&=\|\vec{\Omega}\|_{L^s(\mathbf{S}^{n-1})}\sum_{j=1}^\infty\frac{2^{mn}}{(2^{-j}\sigma)^{-\alpha}}
\prod_{i=1}^m\bigg(\frac{1}{m(B(x,2^{-j+1}\sigma))}\int_{B(x,2^{-j+1}\sigma)}|f_i(y_i)|^{s'}\,dy_i\bigg)^{1/{s'}}\\
&\lesssim\sigma^{\alpha}\mathcal{M}_{s'}(\vec{f})(x)\times\sum_{j=1}^\infty\frac{1}{2^{j\alpha}}
\lesssim\sigma^{\alpha}\mathcal{M}_{s'}(\vec{f})(x),
\end{split}
\end{equation*}
where the multilinear maximal operator $\mathcal{M}_{s'}$ is given by
\begin{equation*}
\mathcal{M}_{s'}(\vec{f})(x):=\big[\mathcal{M}(\vec{f})^{s'}(x)\big]^{1/{s'}},\quad x\in\mathbb R^n.
\end{equation*}
Now we turn to deal with the other term $\mathrm{II}$. In view of \eqref{geq}, we can deduce that
\begin{equation*}
\begin{split}
\mathrm{II}&=\sum_{j=1}^\infty\int_{2^{j-1}\sigma\leq\|(x-y_1,\dots,x-y_m)\|<2^j\sigma}
\frac{|\Omega_1(x-y_1)\cdots\Omega_m(x-y_m)|}{\|(x-y_1,\dots,x-y_m)\|^{mn-\alpha}}
\prod_{i=1}^m|f_i(y_i)|\,dy_i\\
&\leq\sum_{j=1}^\infty\frac{1}{(2^{j-1}\sigma)^{mn-\alpha}}\int_{2^{j-1}\sigma\leq\|(x-y_1,\dots,x-y_m)\|<2^j\sigma}
\prod_{i=1}^m|\Omega_i(x-y_i)|\cdot|f_i(y_i)|\,dy_i\\
&\leq\sum_{j=1}^\infty\frac{1}{(2^{j-1}\sigma)^{mn-\alpha}}
\prod_{i=1}^m\int_{|x-y_i|<2^j\sigma}|\Omega_i(x-y_i)|\cdot|f_i(y_i)|\,dy_i.
\end{split}
\end{equation*}
On the other hand, since $\Omega_i\in L^s(\mathbf{S}^{n-1})$ with $s\geq p_i'$, then $\Omega_i\in L^{p_i'}(\mathbf{S}^{n-1})$, and
\begin{equation*}
\|\Omega_i\|_{L^{p_i'}(\mathbf{S}^{n-1})}\leq C\|\Omega_i\|_{L^s(\mathbf{S}^{n-1})}.
\end{equation*}
This estimate, along with H\"{o}lder's inequality and \eqref{omega88}, yields
\begin{equation}\label{maines1}
\begin{split}
&\int_{|x-y_i|<2^j\sigma}|\Omega_i(x-y_i)|\cdot|f_i(y_i)|\,dy_i\\
&\leq\bigg(\int_{|x-y_i|<2^j\sigma}|\Omega_i(x-y_i)|^{p_i'}\,dy_i\bigg)^{1/{p_i'}}
\bigg(\int_{|x-y_i|<2^j\sigma}|f_i(y_i)|^{p_i}\,dy_i\bigg)^{1/{p_i}}\\
&\leq C\|\Omega_i\|_{L^{p_i'}(\mathbf{S}^{n-1})}\big(2^j\sigma\big)^{n/{p_i'}}\|f_i\|_{L^{p_i}}\\
&\leq C\|\Omega_i\|_{L^s(\mathbf{S}^{n-1})}\big(2^j\sigma\big)^{n/{p_i'}}\|f_i\|_{L^{p_i}},
\end{split}
\end{equation}
for each integer $j\geq1$. Therefore, by \eqref{prod1} and \eqref{maines1}, we get
\begin{equation*}
\begin{split}
\mathrm{II}&\lesssim\|\vec{\Omega}\|_{L^s(\mathbf{S}^{n-1})}
\sum_{j=1}^\infty\frac{1}{(2^{j-1}\sigma)^{mn-\alpha}}\cdot\prod_{i=1}^m\big(2^j\sigma\big)^{n/{p_i'}}\|f_i\|_{L^{p_i}}\\
&=\|\vec{\Omega}\|_{L^s(\mathbf{S}^{n-1})}\sum_{j=1}^\infty\frac{1}{(2^{j-1}\sigma)^{mn-\alpha}}\cdot
\big(2^j\sigma\big)^{\sum_{i=1}^m n/{p_i'}}\prod_{i=1}^m\|f_i\|_{L^{p_i}}.
\end{split}
\end{equation*}
A trivial computation leads to that
\begin{equation}\label{equality1}
\sum_{i=1}^m n/{p_i'}=n\sum_{i=1}^m\big(1-1/{p_i}\big)=mn-n/p.
\end{equation}
Consequently
\begin{equation*}
\begin{split}
\mathrm{II}&\lesssim\sum_{j=1}^\infty\frac{1}{(2^{j}\sigma)^{n/p-\alpha}}\prod_{i=1}^m\|f_i\|_{L^{p_i}}\\
&\lesssim \sigma^{\alpha-n/p}\prod_{i=1}^m\|f_i\|_{L^{p_i}},
\end{split}
\end{equation*}
where the last series is convergent since $n/p-\alpha>0$. Summing up the above estimates for $\mathrm{I}$ and $\mathrm{II}$, we conclude that for any $x\in\mathbb R^n$,
\begin{equation}\label{main1}
\big|\mathcal{T}_{\Omega,\alpha;m}(\vec{f})(x)\big|\lesssim\bigg[\sigma^{\alpha}\mathcal{M}_{s'}(\vec{f})(x)
+\sigma^{\alpha-n/p}\prod_{i=1}^m\|f_i\|_{L^{p_i}}\bigg].
\end{equation}
We now select an appropriate real number $\sigma>0$ such that
\begin{equation*}
\sigma^{\alpha}\mathcal{M}_{s'}(\vec{f})(x)=\sigma^{\alpha-n/p}\prod_{i=1}^m\|f_i\|_{L^{p_i}},
\end{equation*}
that is,
\begin{equation*}
\sigma^{n/p}=\frac{\prod_{i=1}^m\|f_i\|_{L^{p_i}}}{\mathcal{M}_{s'}(\vec{f})(x)}.
\end{equation*}
Putting this value of $\sigma$ back into \eqref{main1} and using the fact that
\begin{equation*}
1-{\alpha p}/n=p(1/p-\alpha/n)=p/q,
\end{equation*}
we can see that
\begin{equation*}
\begin{split}
\big|\mathcal{T}_{\Omega,\alpha;m}(\vec{f})(x)\big|
\lesssim\sigma^{\alpha}\mathcal{M}_{s'}(\vec{f})(x)
&=\Big[\frac{\prod_{i=1}^m\|f_i\|_{L^{p_i}}}{\mathcal{M}_{s'}(\vec{f})(x)}\Big]^{{\alpha p}/n}\cdot \mathcal{M}_{s'}(\vec{f})(x)\\
&=\Big[\mathcal{M}_{s'}(\vec{f})(x)\Big]^{p/q}\cdot\Big[\prod_{i=1}^m\|f_i\|_{L^{p_i}}\Big]^{1-p/q}.
\end{split}
\end{equation*}
Thus, \eqref{wanghua1} holds. The conclusion of Theorem \ref{thm1} then follows from \eqref{wanghua1} and the boundedness of $\mathcal{M}$ (Lemma \ref{multimax}).
\begin{itemize}
  \item If $s'<p_1,p_2,\dots,p_m$ and $p<n/{\alpha}$, then we have
\begin{equation*}
\begin{split}
\big\|\mathcal{T}_{\Omega,\alpha;m}(\vec{f})\big\|_{L^q}&\lesssim\bigg(\int_{\mathbb R^n}\big|\mathcal{M}_{s'}(\vec{f})(x)\big|^{p}dx\bigg)^{1/q} \cdot\Big[\prod_{i=1}^m\|f_i\|_{L^{p_i}}\Big]^{1-p/q}\\
&=\bigg(\int_{\mathbb R^n}\big|\mathcal{M}(\vec{f})^{s'}(x)\big|^{p/{s'}}dx\bigg)^{1/q}\cdot\Big[\prod_{i=1}^m\|f_i\|_{L^{p_i}}\Big]^{1-p/q}.
\end{split}
\end{equation*}
Thus, using the strong-type boundedness of $\mathcal{M}$(part (1) of Lemma \ref{multimax}) and \eqref{prod1}, we obtain
\begin{equation*}
\begin{split}
\big\|\mathcal{T}_{\Omega,\alpha;m}(\vec{f})\big\|_{L^q}&\lesssim\bigg[\prod_{i=1}^m\bigg(\int_{\mathbb R^n}|f_i(x)|^{s'\cdot({p_i}/{s'})}dx\bigg)^{(p/{s'})\cdot({s'}/{p_i})}\bigg]^{1/q}
\cdot\Big[\prod_{i=1}^m\|f_i\|_{L^{p_i}}\Big]^{1-p/q}\\
&=\Big[\prod_{i=1}^m\|f_i\|^p_{L^{p_i}}\Big]^{1/q}\cdot\Big[\prod_{i=1}^m\|f_i\|_{L^{p_i}}\Big]^{1-p/q}\\
&=\Big[\prod_{i=1}^m\|f_i\|_{L^{p_i}}\Big]^{p/q}\cdot\Big[\prod_{i=1}^m\|f_i\|_{L^{p_i}}\Big]^{1-p/q}=\prod_{i=1}^m\|f_i\|_{L^{p_i}}.
\end{split}
\end{equation*}
This proves part $(i)$.
  \item If $s'=\min\{p_1,p_2,\dots,p_m\}$ and $p<n/{\alpha}$, then for any given $\lambda>0$, we have
\begin{equation*}
\begin{split}
&\lambda\cdot m\big(\big\{x\in\mathbb R^n:\big|\mathcal{T}_{\Omega,\alpha;m}(\vec{f})(x)\big|>\lambda\big\}\big)^{1/q}\\
&=\lambda\cdot m\bigg(\bigg\{x\in\mathbb R^n:\big|\mathcal{M}_{s'}(\vec{f})(x)\big|^{p/q}>\frac{\lambda}{C\big[\prod_{i=1}^m\|f_i\|_{L^{p_i}}\big]^{1-p/q}}\bigg\}\bigg)^{1/q}\\
&=\lambda\cdot m\bigg(\bigg\{x\in\mathbb R^n:\big|\mathcal{M}(\vec{f})^{s'}(x)\big|>
\bigg(\frac{\lambda}{C\big[\prod_{i=1}^m\|f_i\|_{L^{p_i}}\big]^{1-p/q}}\bigg)^{{(s'q)}/p}\bigg\}\bigg)^{1/q}.
\end{split}
\end{equation*}
By using the weak-type boundedness of $\mathcal{M}$(part (2) of Lemma \ref{multimax}) and \eqref{prod1}, the above expression is bounded by
\begin{equation*}
\begin{split}
&C\lambda\cdot\bigg[\bigg(\frac{\big[\prod_{i=1}^m\|f_i\|_{L^{p_i}}\big]^{1-p/q}}{\lambda}\bigg)^{{(s'q)}/p}
\cdot\prod_{i=1}^m\bigg(\int_{\mathbb R^n}|f_i(x)|^{s'\cdot({p_i}/{s'})}dx\bigg)^{{s'}/{p_i}}\bigg]^{p/{(s'q)}}\\
&=C\Big[\prod_{i=1}^m\|f_i\|_{L^{p_i}}\Big]^{1-p/q}\cdot\Big[\prod_{i=1}^m\|f_i\|^{s'}_{L^{p_i}}\Big]^{p/{(s'q)}}\\
&=C\Big[\prod_{i=1}^m\|f_i\|_{L^{p_i}}\Big]^{1-p/q}\cdot\Big[\prod_{i=1}^m\|f_i\|_{L^{p_i}}\Big]^{p/q}=C\prod_{i=1}^m\|f_i\|_{L^{p_i}}.
\end{split}
\end{equation*}
This proves part $(ii)$ by taking the supremum over all $\lambda>0$.
\end{itemize}
The proof of Theorem \ref{thm1} is now complete.
\end{proof}

It is well known that the Hardy--Littlewood maximal operator $M$ is of weak $(1,1)$-type and strong $(p,p)$-type, $1<p\leq\infty$. The strong $(1,1)$ inequality is false. In fact, it never holds, as the following result shows (see \cite[p.36]{duoand}).
\begin{prop}
If $f\in L^1(\mathbb R^n)$ and is not identically $0$, then $Mf\notin L^1(\mathbb R^n)$.
\end{prop}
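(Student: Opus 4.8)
The plan is to exploit the far-field behaviour that the maximal function is forced to exhibit whenever $f$ carries positive mass on some bounded set, and then to check that this behaviour is of the borderline non-integrable order $|x|^{-n}$. First I would use the hypothesis that $f$ is not identically $0$ to fix a radius $R>0$ for which
\begin{equation*}
c:=\int_{B(0,R)}|f(y)|\,dy>0;
\end{equation*}
such an $R$ must exist, since otherwise $f$ would vanish almost everywhere, contrary to assumption.

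Next, for an arbitrary $x\in\mathbb R^n$ I would bound $Mf(x)$ from below by testing the defining supremum against the single ball $B(x,|x|+R)$, which contains $B(0,R)$ because every $y$ with $|y|<R$ satisfies $|y-x|\leq|y|+|x|<R+|x|$. This gives
\begin{equation*}
Mf(x)\geq\frac{1}{m(B(x,|x|+R))}\int_{B(x,|x|+R)}|f(y)|\,dy\geq\frac{c}{v_n(|x|+R)^n},
\end{equation*}
where $v_n:=m(B(0,1))$. Restricting to $|x|\geq R$, so that $|x|+R\leq2|x|$, I obtain the pointwise lower bound $Mf(x)\geq c\,2^{-n}v_n^{-1}|x|^{-n}$ on that region.

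Finally I would integrate this estimate over $\{|x|>R\}$ and pass to polar coordinates, reducing the claim to the divergence of $\int_R^\infty\varrho^{-1}\,d\varrho=+\infty$; hence
\begin{equation*}
\int_{\mathbb R^n}Mf(x)\,dx\geq\frac{c}{2^nv_n}\int_{|x|>R}\frac{dx}{|x|^n}=+\infty,
\end{equation*}
so that $Mf\notin L^1(\mathbb R^n)$. There is no serious obstacle here; the one point deserving attention is the geometric observation that, to see the mass of $f$ concentrated near the origin, a ball centred at a distant point $x$ must be taken with radius of order $|x|$, and it is precisely this that pins the decay of $Mf$ at the slowest admissible, and non-integrable, rate $|x|^{-n}$.
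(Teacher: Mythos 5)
Your proof is correct, and it is essentially the classical argument that the paper itself invokes (it states this proposition without proof, citing Duoandikoetxea, \emph{Fourier Analysis}, p.~36, where exactly this reasoning appears): capture positive mass $c=\int_{B(0,R)}|f|\,dy$, test the supremum defining $Mf(x)$ against one large ball containing $B(0,R)$ to get $Mf(x)\gtrsim c\,|x|^{-n}$ for $|x|\geq R$, and conclude by the divergence of $\int_{|x|>R}|x|^{-n}\,dx$. No gaps; the geometric containment and the restriction to $|x|\geq R$ are handled correctly.
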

For any given ball $B$ in $\mathbb R^n$, we say that $f\log^{+}f\in L^1(B)$, if
\begin{equation*}
\int_{B}|f(x)|\log^+|f(x)|\,dx<+\infty,
\end{equation*}
where $\log^+t=\max\{\log t,0\}$. Nevertheless, we do have the following result regarding local integrability of the maximal function.
\begin{prop}
Let $f$ be an integrable function supported in a ball $B\subset\mathbb R^n$. Then $Mf\in L^1(B)$ if and only if $f\log^{+}f\in L^1(B)$.
\end{prop}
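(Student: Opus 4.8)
The statement is an equivalence, so I would prove the two implications separately, and in both directions the engine is the layer-cake (distribution-function) identity $\int_E Mf\,dx=\int_0^\infty m(\{x\in E:Mf(x)>\lambda\})\,d\lambda$ on a suitable bounded set $E$, combined with a sharp one-sided distributional estimate for $M$ and an application of Tonelli's theorem. Throughout I would use that $f$ is supported in $B$, so that $\int_B|f|\log^+|f|=\int_{\mathbb R^n}|f|\log^+|f|$ and $\|f\|_{L^1}<\infty$.

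For the implication $f\log^+f\in L^1(B)\Rightarrow Mf\in L^1(B)$ the key is the refined weak type $(1,1)$ inequality. Writing $f=f\chi_{\{|f|>\lambda/2\}}+f\chi_{\{|f|\le\lambda/2\}}$ and using the sublinearity of $M$ together with $M(f\chi_{\{|f|\le\lambda/2\}})\le\lambda/2$, one obtains $\{Mf>\lambda\}\subseteq\{M(f\chi_{\{|f|>\lambda/2\}})>\lambda/2\}$, so the ordinary weak $(1,1)$ bound gives $m(\{Mf>\lambda\})\lesssim\frac1\lambda\int_{\{|f|>\lambda/2\}}|f|$. I would then apply the layer-cake formula over $2B$ and split at a fixed height $\lambda_0$: the low part is controlled by $\lambda_0\,m(2B)$ (restricting to a bounded set is essential here, since $Mf>0$ everywhere), while the high part, after Tonelli, becomes $\int_{\{|f|>\lambda_0/2\}}|f|\log(2|f|/\lambda_0)$, which is dominated by $\|f\|_{L^1}+\int_B|f|\log^+|f|<\infty$. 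This in fact yields the stronger conclusion $\int_{2B}Mf<\infty$.

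For the converse $Mf\in L^1(B)\Rightarrow f\log^+f\in L^1(B)$ I would establish the matching reverse distributional bound $m(\{x\in B:Mf(x)>c_n\lambda\})\gtrsim\frac1\lambda\int_{\{|f|>\lambda\}}|f|$ for $\lambda$ above the average $f_B=\frac1{|B|}\int_B|f|$. The natural tool is the Calder\'on--Zygmund stopping-time decomposition at height $\lambda$: the maximal stopping cubes $\{Q_j\}$ satisfy $\lambda<\frac1{|Q_j|}\int_{Q_j}|f|\le c_n\lambda$ and $\{|f|>\lambda\}\subseteq\bigcup_jQ_j$ up to a null set, so $\sum_j|Q_j|\ge\frac1{c_n\lambda}\int_{\bigcup_jQ_j}|f|\ge\frac1{c_n\lambda}\int_{\{|f|>\lambda\}}|f|$; moreover every point of $Q_j$ lies in a ball of comparable measure on which the average of $|f|$ exceeds $\lambda$, whence $\bigcup_jQ_j\subseteq\{Mf>c_n\lambda\}$. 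Feeding this into the layer-cake formula and applying Tonelli gives $\int_B|f|\log^+|f|\lesssim\int_B Mf+\|f\|_{L^1}<\infty$.

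The main obstacle lies precisely in this converse: the stopping cubes produced by the decomposition must be kept inside $B$, for otherwise the reverse bound would only control $\int_{cB}Mf$ on a slightly enlarged ball rather than $\int_B Mf$, and---unlike in the forward direction---one cannot recover the boundary shell $cB\setminus B$ from the hypothesis, since the refined weak-type estimate there would require the very $L\log L$ conclusion we are trying to prove. I would resolve this by running the stopping-time argument with a dyadic lattice adapted to $B$ (a system of Christ-type dyadic ``cubes'' tiling $B$ viewed as a space of homogeneous type), so that every selected set is contained in $B$ and the associated dyadic maximal operator $M^d_B$ still satisfies $M^d_Bf(x)\le c_nMf(x)$ for $x\in B$. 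The careful bookkeeping in this adapted decomposition---controlling the comparability constants and guaranteeing the containment of the chosen cubes---is where the genuine work sits; the two Tonelli computations and the refined weak-type inequality are otherwise routine.
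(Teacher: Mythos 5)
The paper never actually proves this proposition; it is quoted from Stein's note on the class $L\log L$ and from Duoandikoetxea's book, so your argument can only be measured against the classical proof rather than against anything in the text. With that caveat, your proposal is correct. The forward implication is exactly the standard argument: the refined weak type bound $m(\{Mf>\lambda\})\lesssim \lambda^{-1}\int_{\{|f|>\lambda/2\}}|f|\,dx$, the layer-cake formula over a bounded set, a split at a fixed height, and Tonelli; your remark that boundedness of the integration set is what tames the small-$\lambda$ range is the right one. In the converse you have also correctly isolated the genuine difficulty: the stopping sets must lie inside $B$, since that is the only place where $Mf$ is known to be integrable, and no cheap enlargement lemma of the form $\int_{2B}Mf\lesssim\int_BMf+\|f\|_{L^1}$ is available -- as you note, justifying it would essentially require the $L\log L$ conclusion itself. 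Your fix via Christ-type dyadic cubes on $B$ viewed as a space of homogeneous type does work: such cubes have bounded parent-to-child measure ratio, they differentiate $L^1$, and each is contained in a Euclidean ball of comparable measure, by the interior regularity $m(B(x,\rho)\cap B)\gtrsim\rho^n$ for $x\in B$ and $0<\rho\le 2r$; these three facts are all your stopping argument uses. But this is far heavier machinery than the problem needs. The classical route to the same localized reverse estimate is elementary: for $\lambda\ge\|f\|_{L^1}/(2^n m(B))$ and almost every $y$ with $|f(y)|>\lambda$, choose the largest radius $\rho_y$ for which the average of $|f|$ over $B(y,\rho_y)$ equals $\lambda$ (it exists and satisfies $\rho_y\le 2r$); then the average over the $5$-fold dilate is at most $\lambda$, every point of $B(y,\rho_y)$ has $Mf\ge 2^{-n}\lambda$, and a Vitali selection from the family $\{B(y,\rho_y)\}$ together with interior regularity of $B$ gives $\int_{\{|f|>\lambda\}}|f|\,dx\le C_n\,\lambda\, m(\{x\in B:Mf(x)\ge 2^{-n}\lambda\})$, after which your layer-cake computation closes the proof; alternatively, if the ball is replaced by a cube, the cube's own dyadic subdivision serves as your adapted lattice at no cost. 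One final precision: what your converse yields with a dimensional constant is $\int_B|f|\log^+\big(|f|/f_B\big)\,dx\lesssim\int_BMf\,dx$, where $f_B$ is the average of $|f|$ over $B$; the inequality $\int_B|f|\log^+|f|\lesssim\int_BMf+\|f\|_{L^1}$ as you wrote it cannot hold with a constant independent of $f$ (replace $f$ by $cf$ and let $c\to\infty$: the left side grows like $c\log c$, the right like $c$), so the passage to $f\log^+f\in L^1(B)$ should go through $\log^+|f|\le\log^+(|f|/f_B)+\log^+f_B$, with a constant depending on $f_B$ -- harmless for the stated equivalence, but worth writing correctly.
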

This result is due to Stein \cite{stein2} (see also \cite[p.42]{duoand} and \cite[p.23]{stein}). In order to deal with the endpoint case, we need to introduce the space $L\log L(B)$, which is defined by
\begin{equation*}
L\log L(B):=\bigg\{f~\mathrm{is~supported~in}~B:\int_{B}|f(x)|\big(1+\log^+|f(x)|\big)\,dx<+\infty\bigg\}.
\end{equation*}
The class $L\log L$ was originally studied by Stein \cite{stein2}. Unfortunately, the expression $\int_{B}|f(x)|\big(1+\log^+|f(x)|\big)\,dx$ does not define a norm. Given a ball $B\subset\mathbb R^n$, we define the following Luxemburg norm:
\begin{equation*}
\|f\|_{L\log L(B)}:=\inf\bigg\{\lambda>0:\int_{B}\frac{|f(x)|}{\lambda}\bigg(1+\log^{+}\Big(\frac{|f(x)|}{\lambda}\Big)\bigg)dx\leq1\bigg\}.
\end{equation*}
Then $L\log L(B)$ becomes a Banach function space (referred to as Orlicz space, which is a generalization of the $L^p$ space) with respect to the norm $\|\cdot\|_{L\log L(B)}$. We rely on the following result, as the author pointed out in \cite[p.42]{duoand}.
\begin{equation}\label{Mlog}
\|Mf\|_{L^1(B)}\leq C\|f\|_{L\log L(B)}.
\end{equation}
More generally, for $1\leq p<\infty$, we now define the space $L^p\log L(B)$ as follows.
\begin{equation*}
L^p\log L(B):=\bigg\{f~\mathrm{is~supported~in}~B:\int_{B}|f(x)|^p\big(1+\log^+|f(x)|\big)\,dx<+\infty\bigg\}.
\end{equation*}
Correspondingly, the Luxemburg norm of $f$ is defined by
\begin{equation*}
\|f\|_{L^p\log L(B)}:=\inf\bigg\{\lambda>0:\int_{B}\bigg(\frac{|f(x)|}{\lambda}\bigg)^p\bigg(1+\log^{+}\Big(\frac{|f(x)|}{\lambda}\Big)\bigg)dx\leq1\bigg\}.
\end{equation*}
By definition, it is obvious that for each given $p$, $L^p\log L(B)\subset L^p(B)$, and
\begin{equation}\label{lplogl}
\|f\|_{L^p(B)}\leq \|f\|_{L^p\log L(B)}.
\end{equation}
In addition, it is easy to verify that for any $1\leq p<\infty$,
\begin{equation}\label{pp}
\big\||f|^p\big\|_{L\log L(B)}\leq \|f\|^p_{L^p\log L(B)}.
\end{equation}
It follows immediately from \eqref{Mlog} and \eqref{pp} that
\begin{equation}\label{Mpp}
\|M(|f|^p)\|_{L^1(B)}\leq C\|f\|^p_{L^p\log L(B)}.
\end{equation}

Based on the above estimates, we are going to prove the following result for the endpoint case $\min\{p_1,p_2,\dots,p_m\}=s'$.

\begin{thm}\label{thm7}
Let $\vec{\Omega}=(\Omega_1,\Omega_2,\dots,\Omega_m)\in L^s(\mathbf{S}^{n-1})$ with $1<s\leq\infty$. Let $0<\alpha<mn$,
\begin{equation*}
1/p=1/{p_1}+1/{p_2}+\cdots+1/{p_m}\quad \mathrm{and}\quad 1/q=1/p-\alpha/n.
\end{equation*}
If $s'=\min\{p_1,p_2,\dots,p_m\}$ and $p<n/{\alpha}$, then for any given ball $B\subset\mathbb R^n$, the inequality
\begin{equation*}
\big\|\mathcal{T}_{\Omega,\alpha;m}(\vec{f})\big\|_{L^q(B)}\lesssim
\prod_{i=1}^{\ell}\big\|f_i\big\|_{L^{p_i}\log L(B)}\cdot\prod_{j=\ell+1}^m\big\|f_j\big\|_{L^{p_j}(B)}
\end{equation*}
holds. Here we suppose that $p_1=\cdots=p_{\ell}=s'$ and $p_{\ell+1}=\cdots=p_m>s'$.
\end{thm}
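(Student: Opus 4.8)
The plan is to start from the pointwise estimate \eqref{wanghua1}, which was established for every $\vec{f}\in L^{p_1}(\mathbb R^n)\times\cdots\times L^{p_m}(\mathbb R^n)$ in the range $s'/m\leq p<n/{\alpha}$. Since each $f_i$ is supported in $B$ we have $\|f_i\|_{L^{p_i}}=\|f_i\|_{L^{p_i}(B)}$, and the condition $p_i\geq s'$ forces $1/p=\sum_i 1/p_i\leq m/{s'}$, i.e. $p\geq s'/m$, so \eqref{wanghua1} is indeed available in our setting. Raising \eqref{wanghua1} to the power $q$ and integrating over $B$, I would pull the constant factor $\big[\prod_{i=1}^m\|f_i\|_{L^{p_i}(B)}\big]^{q-p}$ outside the integral (this is the exponent $(1-p/q)q$), thereby reducing the whole matter to controlling
\begin{equation*}
\int_B\big[\mathcal{M}_{s'}(\vec{f})(x)\big]^{p}\,dx=\int_B\big[\mathcal{M}\big((\vec{f})^{s'}\big)(x)\big]^{p/{s'}}\,dx.
\end{equation*}

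The second step is to strip off the multilinear maximal operator. Using the elementary pointwise domination $\mathcal{M}(\vec{g})(x)\leq\prod_{i=1}^m Mg_i(x)$ (see \cite{lerner}), applied to $\vec{g}=(\vec{f})^{s'}$, I obtain
\begin{equation*}
\big[\mathcal{M}\big((\vec{f})^{s'}\big)(x)\big]^{p/{s'}}\leq\prod_{i=1}^m\big[M(|f_i|^{s'})(x)\big]^{p/{s'}}.
\end{equation*}
I would then apply H\"older's inequality on $B$ with exponents $r_i$ chosen as $r_i=s'/p$ for $1\leq i\leq\ell$ and $r_i=p_i/p$ for $\ell+1\leq i\leq m$. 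The identity $\sum_{i=1}^m 1/{r_i}=p\sum_{i=1}^m 1/{p_i}=1$, which uses $p_i=s'$ for $i\leq\ell$, is precisely what makes this splitting admissible and is where the hypothesis $s'=\min\{p_1,\dots,p_m\}$ with $p_1=\cdots=p_\ell=s'$ is exploited.

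For the indices $j>\ell$ the factor $\big(\int_B[M(|f_j|^{s'})]^{p_j/{s'}}\big)^{1/{r_j}}$ is handled by the strong $(p_j/{s'},p_j/{s'})$ boundedness of $M$ (legitimate since $p_j/{s'}>1$), producing $\|f_j\|_{L^{p_j}(B)}^{p}$. For the critical indices $i\leq\ell$, where $p_i/{s'}=1$ and the strong bound fails, I would instead invoke the $L\log L$ estimate \eqref{Mpp} (with the exponent there taken to be $s'$), which yields $\big(\int_B M(|f_i|^{s'})\big)^{p/{s'}}\lesssim\|f_i\|_{L^{p_i}\log L(B)}^{p}$. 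Multiplying these contributions gives
\begin{equation*}
\int_B\big[\mathcal{M}_{s'}(\vec{f})(x)\big]^{p}\,dx\lesssim\prod_{i=1}^\ell\|f_i\|_{L^{p_i}\log L(B)}^{p}\cdot\prod_{j=\ell+1}^m\|f_j\|_{L^{p_j}(B)}^{p}.
\end{equation*}

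Finally, combining this with the pulled-out factor $\big[\prod_{i=1}^m\|f_i\|_{L^{p_i}(B)}\big]^{q-p}$ and using \eqref{lplogl} to dominate $\|f_i\|_{L^{p_i}(B)}\leq\|f_i\|_{L^{p_i}\log L(B)}$ for $i\leq\ell$ (note $q>p$ since $\alpha>0$), the exponents collapse to $q$ on every factor; taking the $q$-th root then produces exactly the claimed inequality. I expect the one genuine obstacle to be the critical block $i\leq\ell$: there the maximal operator fails to be strong-type bounded at the relevant exponent $1$, so the essential point is to substitute the $L\log L$ bound \eqref{Mpp}, which is what forces the $L^{p_i}\log L(B)$ norms onto the right-hand side; everything else is careful bookkeeping of the H\"older exponents and the equality $\sum_i 1/{r_i}=1$.
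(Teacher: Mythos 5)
Your proposal is correct and follows essentially the same route as the paper's own proof: both start from the pointwise Hedberg estimate \eqref{wanghua1}, dominate $\mathcal{M}_{s'}(\vec{f})$ by $\prod_{i=1}^m M(|f_i|^{s'})$ via \eqref{wang830}, split the product by H\"older's inequality with exponents $p_i/s'$ (your $r_i$ are exactly these), and then use the strong $(p_j/s',p_j/s')$ bound for the non-critical indices and the $L\log L$ estimate \eqref{Mpp} for the critical ones, finishing with \eqref{lplogl}. The only difference is cosmetic: you apply H\"older at the level of the integral with explicit exponents, while the paper phrases it as a norm inequality $\|\mathcal{M}(f_1^{s'},\dots,f_m^{s'})\|_{L^{p/s'}(B)}\leq\prod_i\|M(|f_i|^{s'})\|_{L^{p_i/s'}(B)}$.
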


\begin{proof}[Proof of Theorem $\ref{thm7}$]
Taking into consideration \eqref{wanghua1}, we get
\begin{equation*}
\begin{split}
\big\|\mathcal{T}_{\Omega,\alpha;m}(\vec{f})\big\|_{L^q(B)}
&\lesssim\bigg(\int_{B}\big|\mathcal{M}_{s'}(\vec{f})(x)\big|^{p}dx\bigg)^{1/q}
\cdot\Big[\prod_{i=1}^m\|f_i\|_{L^{p_i}(B)}\Big]^{1-p/q}\\
&=\bigg(\int_{B}\big|\mathcal{M}(\vec{f})^{s'}(x)\big|^{p/{s'}}dx\bigg)^{1/q}
\cdot\Big[\prod_{i=1}^m\|f_i\|_{L^{p_i}(B)}\Big]^{1-p/q}.
\end{split}
\end{equation*}
Recalling the definition of the multilinear maximal operator $\mathcal{M}$, we have
\begin{equation}\label{wang830}
\mathcal{M}(f_1^{s'},f_2^{s'},\dots,f_m^{s'})(x)\leq \prod_{i=1}^m M(|f_i|^{s'})(x),\quad \forall~x\in\mathbb R^n.
\end{equation}
When $p_i>s'$ for some $1\leq i\leq m$, then by the strong-type boundedness of $M$, we get
\begin{equation}\label{wang831}
\big\|M(|f_i|^{s'})\big\|_{L^{p_i/{s'}}(B)}\leq C\big\||f_i|^{s'}\big\|_{L^{p_i/{s'}}(B)}=C\big\|f_i\big\|_{L^{p_i}(B)}^{s'}.
\end{equation}
On the other hand, when $p_i=s'$ for some $1\leq i\leq m$, it then follows from \eqref{Mpp} that
\begin{equation}\label{wang832}
\big\|M(|f_i|^{s'})\big\|_{L^{p_i/{s'}}(B)}=\big\|M(|f_i|^{p_i})\big\|_{L^1(B)}
\leq C\big\|f_i\big\|_{L^{p_i}\log L(B)}^{p_i}.
\end{equation}
Notice that
\begin{equation*}
\frac{1}{p/{s'}}=\frac{1}{p_1/{s'}}+\frac{1}{p_2/{s'}}+\cdots+\frac{1}{p_m/{s'}}.
\end{equation*}
When $\min\{p_1,p_2,\dots,p_m\}=s'$, without loss of generality, we may assume that
\begin{equation*}
p_1=p_2=\cdots=p_{\ell}=s',\quad \mbox{and} \quad p_{\ell+1}=p_{\ell+2}=\cdots=p_m>s'.
\end{equation*}
Applying H\"{o}lder's inequality for a product of $m$ functions and estimates \eqref{wang830},\eqref{wang831} and \eqref{wang832}, we can easily deduce that
\begin{equation*}
\begin{split}
\big\|\mathcal{T}_{\Omega,\alpha;m}(\vec{f})\big\|_{L^q(B)}
&\lesssim\Big[\big\|\mathcal{M}(f_1^{s'},f_2^{s'},\dots,f_m^{s'})\big\|_{L^{p/{s'}}(B)}\Big]^{(p/{s'})\cdot(1/q)}
\cdot\Big[\prod_{i=1}^m\|f_i\|_{L^{p_i}(B)}\Big]^{1-p/q}\\
&\leq\Big[\prod_{i=1}^m\big\|M(|f_i|^{s'})\big\|_{L^{p_i/{s'}}(B)}\Big]^{(p/{s'})\cdot(1/q)}
\cdot\Big[\prod_{i=1}^m\|f_i\|_{L^{p_i}(B)}\Big]^{1-p/q}\\
&\lesssim \Big[\prod_{i=1}^{\ell}\big\|f_i\big\|^{s'}_{L^{p_i}\log L(B)}
\cdot\prod_{j=\ell+1}^m\big\|f_j\big\|_{L^{p_j}(B)}^{s'}\Big]^{(p/{s'})\cdot(1/q)}\\
&\times\Big[\prod_{i=1}^m\|f_i\|_{L^{p_i}(B)}\Big]^{1-p/q}\\
&\leq \prod_{i=1}^{\ell}\big\|f_i\big\|_{L^{p_i}\log L(B)}\cdot\prod_{j=\ell+1}^m\big\|f_j\big\|_{L^{p_j}(B)},
\end{split}
\end{equation*}
where in the last inequality we have used \eqref{lplogl}. The proof of Theorem \ref{thm7} is now complete.
\end{proof}

\section{The critical case $p=n/{\alpha}$}
As in the proof of \eqref{pointwise}, we can also prove that $\mathcal{M}_{\Omega,\alpha;m}(\vec{f})$ is dominated
by $\mathcal{T}_{|\Omega|,\alpha;m}(\vec{f})$ for any $\vec{f}$. Indeed, for any $0<\alpha<mn$, $x\in\mathbb R^n$ and $r>0$, we have
\begin{equation*}
\begin{split}
\mathcal{T}_{|\Omega|,\alpha;m}(|\vec{f}|)(x)
&\geq
\int_{|x-y_1|<r}\cdots\int_{|x-y_m|<r}\frac{1}{\|(x-y_1,\dots,x-y_m)\|^{mn-\alpha}}
\prod_{i=1}^m\big|\Omega_i(x-y_i)f_i(y_i)\big|\,dy_i\\
&\geq\frac{1}{(mr)^{mn-\alpha}}\prod_{i=1}^m\int_{|x-y_i|<r}\big|\Omega_i(x-y_i)f_i(y_i)\big|\,dy_i\\
&=C_{m,n,\alpha}\prod_{i=1}^m\frac{1}{m(B(x,r))^{1-\alpha/{mn}}}\int_{|x-y_i|<r}\big|\Omega_i(x-y_i)f_i(y_i)\big|\,dy_i.
\end{split}
\end{equation*}
Thus, the desired result follows by taking the supremum for all $r>0$. As a direct consequence of Theorems \ref{thm1} and \ref{thm7}, we have the corresponding results for the multilinear fractional maximal operator $\mathcal{M}_{\Omega,\alpha;m}$.
\begin{thm}\label{maximalthm1}
Let $\vec{\Omega}=(\Omega_1,\Omega_2,\dots,\Omega_m)\in L^s(\mathbf{S}^{n-1})$ with $1<s\leq\infty$. Let $0<\alpha<mn$, $s'\leq p_1,p_2,\dots,p_m<\infty$,
\begin{equation*}
1/p=1/{p_1}+1/{p_2}+\cdots+1/{p_m}\quad \mathrm{and}\quad 1/q=1/p-\alpha/n.
\end{equation*}

$(i)$ If $s'<p_1,p_2,\dots,p_m<\infty$, then for every $p\in(s'/m,n/{\alpha})$, the following inequality
\begin{equation*}
\big\|\mathcal{M}_{\Omega,\alpha;m}(\vec{f})\big\|_{L^q}\lesssim\prod_{i=1}^m\big\|f_i\big\|_{L^{p_i}}
\end{equation*}
holds for all $\vec{f}=(f_1,f_2,\dots,f_m)\in L^{p_1}(\mathbb R^n)\times L^{p_2}(\mathbb R^n)\times \cdots\times L^{p_m}(\mathbb R^n)$.

$(ii)$ If $s'=\min\{p_1,p_2,\dots,p_m\}$, then for every $p\in[s'/m,n/{\alpha})$, the following inequality
\begin{equation*}
\big\|\mathcal{M}_{\Omega,\alpha;m}(\vec{f})\big\|_{L^{q,\infty}}\lesssim\prod_{i=1}^m\big\|f_i\big\|_{L^{p_i}}
\end{equation*}
holds for all $\vec{f}=(f_1,f_2,\dots,f_m)\in L^{p_1}(\mathbb R^n)\times L^{p_2}(\mathbb R^n)\times \cdots\times L^{p_m}(\mathbb R^n)$.

$(iii)$ If $s'=\min\{p_1,p_2,\dots,p_m\}$ and $p<n/{\alpha}$, then for any given ball $B\subset\mathbb R^n$, the following inequality
\begin{equation*}
\big\|\mathcal{M}_{\Omega,\alpha;m}(\vec{f})\big\|_{L^q(B)}\lesssim
\prod_{i=1}^{\ell}\big\|f_i\big\|_{L^{p_i}\log L(B)}\cdot\prod_{j=\ell+1}^m\big\|f_j\big\|_{L^{p_j}(B)}
\end{equation*}
holds. Here we suppose that $p_1=\cdots=p_{\ell}=s'$ and $p_{\ell+1}=\cdots=p_m>s'$.
\end{thm}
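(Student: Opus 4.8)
The plan is to deduce all three parts directly from the corresponding estimates for $\mathcal{T}_{\Omega,\alpha;m}$ already established in Theorems \ref{thm1} and \ref{thm7}, exploiting the pointwise domination proved at the beginning of this section, namely
\[
C_{m,n,\alpha}\cdot\mathcal{M}_{\Omega,\alpha;m}(\vec{f})(x)\leq \mathcal{T}_{|\Omega|,\alpha;m}(|\vec{f}|)(x),\quad \forall~x\in\mathbb R^n,
\]
where $|\vec{\Omega}|:=(|\Omega_1|,\dots,|\Omega_m|)$ and $|\vec{f}|:=(|f_1|,\dots,|f_m|)$. The whole point is that this pointwise bound transfers any $L^q$, $L^{q,\infty}$, or $L^q(B)$ estimate for the integral operator $\mathcal{T}_{|\Omega|,\alpha;m}$, applied to the nonnegative data $|\vec{f}|$ and with kernel coefficients $|\Omega_i|$, into the same estimate for the maximal operator $\mathcal{M}_{\Omega,\alpha;m}$.

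First I would verify that the hypotheses of Theorems \ref{thm1} and \ref{thm7} are available for $\mathcal{T}_{|\Omega|,\alpha;m}$. Each $|\Omega_i|$ is again homogeneous of degree zero, and $\||\Omega_i|\|_{L^s(\mathbf{S}^{n-1})}=\|\Omega_i\|_{L^s(\mathbf{S}^{n-1})}$, so $|\vec{\Omega}|\in L^s(\mathbf{S}^{n-1})$ under exactly the same assumptions on $s$. Moreover $\||f_i|\|_{L^{p_i}}=\|f_i\|_{L^{p_i}}$, and since the Luxemburg norm depends on $f_i$ only through $|f_i|$ we also have $\||f_i|\|_{L^{p_i}\log L(B)}=\|f_i\|_{L^{p_i}\log L(B)}$ and $\||f_j|\|_{L^{p_j}(B)}=\|f_j\|_{L^{p_j}(B)}$. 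Hence the right-hand sides furnished by Theorems \ref{thm1} and \ref{thm7} are unchanged when $\Omega_i,f_i$ are replaced by $|\Omega_i|,|f_i|$.

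Then for part $(i)$, combining the pointwise bound with the monotonicity of the $L^q$ norm gives $\|\mathcal{M}_{\Omega,\alpha;m}(\vec{f})\|_{L^q}\lesssim \|\mathcal{T}_{|\Omega|,\alpha;m}(|\vec{f}|)\|_{L^q}$, and Theorem \ref{thm1}$(i)$ bounds the latter by $\prod_{i=1}^m\|f_i\|_{L^{p_i}}$; note that the assumption $s'<p_1,\dots,p_m$ forces $1/p<m/s'$, i.e. $p>s'/m$, so the admissible range $(s'/m,n/\alpha)$ is met. For part $(ii)$ I would use that a pointwise inequality $|g|\le C|h|$ implies $\|g\|_{L^{q,\infty}}\le C\|h\|_{L^{q,\infty}}$, via the distribution-function comparison $m(\{|g|>\lambda\})\le m(\{|h|>\lambda/C\})$, and then invoke Theorem \ref{thm1}$(ii)$; here $s'=\min\{p_i\}$ gives $p\ge s'/m$, so again the range is met. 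For part $(iii)$ I would restrict the pointwise bound to a ball $B$ and apply Theorem \ref{thm7} to $\mathcal{T}_{|\Omega|,\alpha;m}(|\vec{f}|)$ over $B$, producing the mixed $L^{p_i}\log L(B)$ and $L^{p_j}(B)$ bound.

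In fact there is no substantial obstacle here: the entire analytic content lies in Theorems \ref{thm1} and \ref{thm7}, and the present statement is a formal corollary obtained by replacing the integral operator with the maximal operator it dominates. The only points requiring a word of care are the monotonicity of the weak-type quasi-norm $\|\cdot\|_{L^{q,\infty}}$ under pointwise domination in part $(ii)$ and the invariance of the Luxemburg norm under taking absolute values in part $(iii)$; both are routine and cause no difficulty.
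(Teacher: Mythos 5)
Your proposal is correct and is essentially identical to the paper's own argument: the paper likewise proves the pointwise bound $C_{m,n,\alpha}\,\mathcal{M}_{\Omega,\alpha;m}(\vec{f})\leq \mathcal{T}_{|\Omega|,\alpha;m}(|\vec{f}|)$ by restricting the defining integral to $\{|x-y_i|<r\}$ and taking the supremum over $r>0$, and then deduces all three parts as direct consequences of Theorems \ref{thm1} and \ref{thm7}. The verifications you add (invariance of the $L^{p_i}$ and Luxemburg norms under absolute values, monotonicity of the weak-type quasi-norm, and the range checks for $p$) are exactly the routine points implicitly used there.
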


For the critical case $p=n/{\alpha}$, we can also show that the multilinear fractional maximal operator $\mathcal{M}_{\Omega,\alpha;m}$ is bounded from $L^{p_1}(\mathbb R^n)\times L^{p_2}(\mathbb R^n)\times\cdots\times L^{p_m}(\mathbb R^n)$ into $L^{\infty}(\mathbb R^n)$.

\begin{thm}\label{maximalthm2}
Let $\vec{\Omega}=(\Omega_1,\Omega_2,\dots,\Omega_m)\in L^s(\mathbf{S}^{n-1})$ with $1<s\leq\infty$. If
\begin{equation*}
0<\alpha<mn,\qquad  s'\leq p_1,p_2,\dots,p_m<\infty,
\end{equation*}
and
\begin{equation*}
{\alpha}/{n}=1/{p_1}+1/{p_2}+\cdots+1/{p_m},
\end{equation*}
then the following inequality
\begin{equation*}
\big\|\mathcal{M}_{\Omega,\alpha;m}(\vec{f})\big\|_{L^{\infty}}\lesssim\prod_{i=1}^m\big\|f_i\big\|_{L^{p_i}}
\end{equation*}
holds for all $\vec{f}=(f_1,f_2,\dots,f_m)\in L^{p_1}(\mathbb R^n)\times L^{p_2}(\mathbb R^n)\times\cdots\times L^{p_m}(\mathbb R^n)$.
\end{thm}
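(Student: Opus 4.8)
The plan is to bound each of the $m$ factors defining $\mathcal{M}_{\Omega,\alpha;m}(\vec{f})(x)$ directly by a power of $r$ times the relevant norms, and then observe that upon taking the product the total power of $r$ collapses to zero precisely because $\alpha=n/p$. Unlike the subcritical situation of Theorem \ref{thm1}, there is no need here to split the kernel into near and far parts or to balance two competing terms with a free parameter $\sigma$; the critical relation $\alpha/n=1/p$ makes the estimate uniform in $r$ automatically, which is exactly the mechanism producing an $L^{\infty}$ bound rather than an $L^q$ bound with finite $q$.

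First I would fix $x\in\mathbb R^n$ and $r>0$ and estimate the $i$-th average. Since $p_i\geq s'$ is equivalent to $p_i'\leq s$, each $\Omega_i$ belongs to $L^{p_i'}(\mathbf{S}^{n-1})$ with $\|\Omega_i\|_{L^{p_i'}(\mathbf{S}^{n-1})}\lesssim\|\Omega_i\|_{L^s(\mathbf{S}^{n-1})}$. Applying H\"older's inequality with exponents $p_i'$ and $p_i$ together with the kernel estimate \eqref{omega88}, exactly as in \eqref{maines1}, gives
\[
\int_{|x-y_i|<r}|\Omega_i(x-y_i)f_i(y_i)|\,dy_i
\lesssim\|\Omega_i\|_{L^s(\mathbf{S}^{n-1})}\,r^{n/p_i'}\,\|f_i\|_{L^{p_i}}.
\]
Dividing by $m(B(x,r))^{1-\alpha/(mn)}=c_n\,r^{n-\alpha/m}$ and taking the product over $i=1,\dots,m$ via \eqref{prod1}, the total power of $r$ becomes $\sum_{i=1}^m n/p_i'-m\bigl(n-\alpha/m\bigr)=\sum_{i=1}^m n/p_i'-(mn-\alpha)$.

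The key step, and the only place where criticality is used, is the exponent arithmetic: by \eqref{equality1} one has $\sum_{i=1}^m n/p_i'=mn-n/p$, so the total power of $r$ equals $(mn-n/p)-(mn-\alpha)=\alpha-n/p$, which vanishes exactly under the hypothesis $\alpha/n=1/p$. Hence the product of the $m$ factors is bounded by $\|\vec{\Omega}\|_{L^s(\mathbf{S}^{n-1})}\prod_{i=1}^m\|f_i\|_{L^{p_i}}$ with a constant independent of both $r$ and $x$. Taking the supremum over $r>0$ and then over $x\in\mathbb R^n$ yields the claimed $L^{\infty}$ estimate. I expect no genuine obstacle beyond this bookkeeping; the one point requiring mild care is the endpoint $p_i=s'$, where H\"older is applied with $s=p_i'$ holding as an equality, but the estimate \eqref{omega88} and the comparison of norms on $\mathbf{S}^{n-1}$ remain valid there.
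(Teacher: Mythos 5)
Your proof is correct and takes essentially the same approach as the paper's: fix $x$ and $r$, bound each of the $m$ averages by H\"older's inequality together with the kernel estimate \eqref{omega88}, and note that the resulting powers of $r$ cancel exactly because $\sum_{i=1}^m 1/{p_i}=\alpha/n$, so the bound is uniform in $r$ and $x$. The only cosmetic difference is that you apply H\"older once with the pair $(p_i',p_i)$ (using $\|\Omega_i\|_{L^{p_i'}(\mathbf{S}^{n-1})}\lesssim\|\Omega_i\|_{L^{s}(\mathbf{S}^{n-1})}$, as in \eqref{maines1}), whereas the paper applies it with $(s,s')$ and then upgrades the $L^{s'}$ average of $f_i$ to the $L^{p_i}$ norm by a second H\"older, split into the cases $p_i=s'$ and $p_i>s'$; the two computations are equivalent, and yours avoids the case distinction.
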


\begin{proof}[Proof of Theorem $\ref{maximalthm2}$]
Given a ball $B(x,r)$ with center $x\in\mathbb R^n$ and radius $r\in(0,\infty)$, by using H\"{o}lder's inequality and \eqref{omega88}, we get
\begin{equation*}
\begin{split}
&\prod_{i=1}^m\frac{1}{m(B(x,r))^{1-\alpha/{mn}}}\int_{|x-y_i|<r}|\Omega_i(x-y_i)|\cdot|f_i(y_i)|\,dy_i\\
&\leq\prod_{i=1}^m\frac{1}{m(B(x,r))^{1-\alpha/{mn}}}\bigg(\int_{|x-y_i|<r}|\Omega_i(x-y_i)|^sdy_i\bigg)^{1/s}
\bigg(\int_{|x-y_i|<r}|f_i(y_i)|^{s'}dy_i\bigg)^{1/{s'}}\\
&\lesssim\big\|\vec{\Omega}\big\|_{L^s(\mathbf{S}^{n-1})}
\prod_{i=1}^m\frac{1}{m(B(x,r))^{1-\alpha/{mn}}}
\cdot m(B(x,r))^{1/s}\bigg(\int_{|x-y_i|<r}|f_i(y_i)|^{s'}dy_i\bigg)^{1/{s'}}.
\end{split}
\end{equation*}

Two cases are considered below.

\textbf{Case 1:} $p_i=s'$ for some $1\leq i\leq m$. In this case, one has $p'_i=s$, then we certainly have
\begin{equation*}
m(B(x,r))^{1/s}\bigg(\int_{|x-y_i|<r}|f_i(y_i)|^{s'}dy_i\bigg)^{1/{s'}}
\leq m(B(x,r))^{1-1/{p_i}}\big\|f_i\big\|_{L^{p_i}}.
\end{equation*}

\textbf{Case 2:} $p_i>s'$ for some $1\leq i\leq m$. A direct computation shows that
\begin{equation*}
\begin{split}
\frac{1}{s'({p_i}/{s'})'}=\frac{1}{s'}\cdot\frac{p_i-s'}{p_i}=\frac{1}{s'}-\frac{1}{p_i}.
\end{split}
\end{equation*}
This, combined with H\"{o}lder's inequality, yields
\begin{equation*}
\begin{split}
\bigg(\int_{|x-y_i|<r}|f_i(y_i)|^{s'}dy_i\bigg)^{1/{s'}}
&\leq\bigg(\int_{B(x,r)}|f_i(y_i)|^{p_i}\,dy_i\bigg)^{1/{p_i}}\Big[m(B(x,r))\Big]^{\frac{1}{s'({p_i}/{s'})'}}\\
&=\bigg(\int_{B(x,r)}|f_i(y_i)|^{p_i}\,dy_i\bigg)^{1/{p_i}}\Big[m(B(x,r))\Big]^{1/{s'}-1/{p_i}},
\end{split}
\end{equation*}
and hence
\begin{equation*}
m(B(x,r))^{1/s}\bigg(\int_{|x-y_i|<r}|f_i(y_i)|^{s'}dy_i\bigg)^{1/{s'}}
\leq m(B(x,r))^{1-1/{p_i}}\big\|f_i\big\|_{L^{p_i}}.
\end{equation*}
Summing up the above estimates, we conclude that for any $x\in\mathbb R^n$ and $r>0$,
\begin{equation*}
\begin{split}
&\prod_{i=1}^m\frac{1}{m(B(x,r))^{1-\alpha/{mn}}}\int_{|x-y_i|<r}|\Omega_i(x-y_i)|\cdot|f_i(y_i)|\,dy_i\\
&\lesssim\|\vec{\Omega}\|_{L^s(\mathbf{S}^{n-1})}
\prod_{i=1}^m\frac{1}{m(B(x,r))^{1-\alpha/{mn}}}\times m(B(x,r))^{1-1/{p_i}}\big\|f_i\big\|_{L^{p_i}}\\
&=\|\vec{\Omega}\|_{L^s(\mathbf{S}^{n-1})}\prod_{i=1}^m\big\|f_i\big\|_{L^{p_i}}
\frac{1}{m(B(x,r))^{m-\alpha/n}}\cdot m(B(x,r))^{\sum_{i=1}^m(1-1/{p_i})}\\
&=\|\vec{\Omega}\|_{L^s(\mathbf{S}^{n-1})}\prod_{i=1}^m\big\|f_i\big\|_{L^{p_i}},
\end{split}
\end{equation*}
where in the last step we have used the fact that $\sum_{i=1}^m 1/{p_i}=\alpha/n$. By taking the supremum for all $r>0$, we are done.
\end{proof}

A locally integrable function $f$ is said to be in the bounded mean oscillation space $\mathrm{BMO}(\mathbb R^n)$ (see \cite{john}), if
\begin{equation*}
\|f\|_{\mathrm{BMO}}:=\sup_{B\subset\mathbb R^n}\frac{1}{m(B)}\int_B|f(x)-f_B|\,dx<+\infty,
\end{equation*}
where $f_B$ denotes the average of $f$ on $B$, i.e.,
\begin{equation*}
f_B:=\frac{1}{m(B)}\int_B f(y)\,dy
\end{equation*}
and the supremum is taken over all balls $B$ in $\mathbb R^n$. Modulo constants, the space $\mathrm{BMO}(\mathbb R^n)$ is a Banach space with respect to the norm $\|\cdot\|_{\mathrm{BMO}}$. For the critical case $p=n/{\alpha}$, it can be shown that the multilinear fractional integral operator $\mathcal{T}_{\Omega,\alpha;m}$ is bounded from $L^{p_1}(\mathbb R^n)\times L^{p_2}(\mathbb R^n)\times\cdots\times L^{p_m}(\mathbb R^n)$ into $\mathrm{BMO}(\mathbb R^n)$, provided $\Omega_1=\cdots=\Omega_m\equiv1$.

\begin{thm}\label{maximalthm3}
Let $0<\alpha<mn$, $1\leq p_1,p_2,\dots,p_m<\infty$, and
\begin{equation*}
{\alpha}/{n}=1/{p_1}+1/{p_2}+\cdots+1/{p_m}.
\end{equation*}
Suppose that $\Omega_1=\Omega_2=\cdots=\Omega_m\equiv1$, then the following inequality
\begin{equation*}
\big\|\mathcal{T}_{\Omega,\alpha;m}(\vec{f})\big\|_{\mathrm{BMO}}\lesssim\prod_{i=1}^m\big\|f_i\big\|_{L^{p_i}}
\end{equation*}
holds for all $\vec{f}=(f_1,f_2,\dots,f_m)\in L^{p_1}(\mathbb R^n)\times L^{p_2}(\mathbb R^n)\times\cdots\times L^{p_m}(\mathbb R^n)$.
\end{thm}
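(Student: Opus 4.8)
The plan is to fix an arbitrary ball $B=B(x_0,r)$ and to bound $\frac{1}{m(B)}\int_B\big|\mathcal{T}_{\Omega,\alpha;m}(\vec f)(x)-c_B\big|\,dx$ by $C\prod_i\|f_i\|_{L^{p_i}}$ for a suitably chosen constant $c_B$, then take the supremum over all $B$. Since $\Omega_1=\cdots=\Omega_m\equiv1$, the operator is $\mathcal{I}_{\alpha;m}$ with kernel $K(x;\vec y):=\|(x-y_1,\dots,x-y_m)\|^{-(mn-\alpha)}$. Writing each $f_i=f_i^0+f_i^\infty$ with $f_i^0:=f_i\chi_{2B}$ and $f_i^\infty:=f_i\chi_{(2B)^{\complement}}$ and expanding by multilinearity, $\mathcal{I}_{\alpha;m}(\vec f)=\sum_{\vec\varepsilon\in\{0,\infty\}^m}\mathcal{I}_{\alpha;m}(f_1^{\varepsilon_1},\dots,f_m^{\varepsilon_m})$. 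I will isolate the purely local term $\vec\varepsilon=\vec0$ from the $2^m-1$ terms in which at least one index equals $\infty$, and choose $c_B$ to be the sum, evaluated at the centre $x_0$, of the latter terms; each such term is then processed through the kernel difference $K(x;\vec y)-K(x_0;\vec y)$, so only convergent differences are ever used (the individual centre values may diverge at the critical exponent, exactly as in the linear case $I_\alpha\colon L^{n/\alpha}\to\mathrm{BMO}$).

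For the local term I would estimate $\frac{1}{m(B)}\int_B|\mathcal{I}_{\alpha;m}(\vec f^{0})|\,dx$ by the sub-critical strong-type bound of Theorem \ref{kenigstein}. Since $\sum_i 1/p_i=\alpha/n<m$, at least one $p_i>1$, so I may pick exponents $\tilde p_i\le p_i$ with $\sum_i 1/\tilde p_i=\alpha/n+\delta$ for small $\delta>0$; the target exponent $\tilde q=1/\delta$ is then finite. Because $f_i^0$ is supported in $2B$, Hölder's inequality gives $\|f_i^0\|_{L^{\tilde p_i}}\lesssim m(2B)^{1/\tilde p_i-1/p_i}\|f_i\|_{L^{p_i}}$, and Hölder on $B$ together with Theorem \ref{kenigstein} yields $\frac{1}{m(B)}\int_B|\mathcal{I}_{\alpha;m}(\vec f^{0})|\le m(B)^{-1/\tilde q}\|\mathcal{I}_{\alpha;m}(\vec f^{0})\|_{L^{\tilde q}}\lesssim r^{-n\delta}\cdot r^{n\delta}\prod_i\|f_i\|_{L^{p_i}}$, where the powers of $r$ cancel. (If some $p_i=1$, keep $\tilde p_i=1$ there, use the weak-type part of Theorem \ref{kenigstein}, and replace Hölder on $B$ by Kolmogorov's inequality; the computation is identical.)

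The heart of the matter is the oscillation of each non-local term. For $x\in B$ and $\vec y$ with at least one coordinate outside $2B$, the segment $[x_0,x]$ never reaches the singularity of $K$: if $|x_0-y_{j_0}|\ge 2r$ then, by the triangle inequality and \eqref{geq}, $\|(\xi-y_1,\dots,\xi-y_m)\|\approx\|(x_0-y_1,\dots,x_0-y_m)\|$ uniformly for $\xi\in[x_0,x]$. Writing $K(x;\vec y)=\Phi(x-y_1,\dots,x-y_m)$ with $\Phi(z)=\|z\|^{-(mn-\alpha)}$ and $|\nabla\Phi(z)|\lesssim\|z\|^{-(mn-\alpha+1)}$, the mean value theorem gives
\begin{equation*}
\big|K(x;\vec y)-K(x_0;\vec y)\big|\lesssim\frac{r}{\|(x_0-y_1,\dots,x_0-y_m)\|^{mn-\alpha+1}}.
\end{equation*}
Substituting this, the averaging factor $\frac{1}{m(B)}\int_B(\cdots)\,dx$ collapses to $r\int_{E_{\vec\varepsilon}}\|(x_0-y_1,\dots,x_0-y_m)\|^{-(mn-\alpha+1)}\prod_i|f_i(y_i)|\,d\vec y$, where $E_{\vec\varepsilon}$ is a product region on which $\|(x_0-y_1,\dots,x_0-y_m)\|\ge 2r$. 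Decomposing into dyadic shells $2^{k}r\le\|(x_0-y_1,\dots,x_0-y_m)\|<2^{k+1}r$ ($k\ge1$), on which each $y_i\in B(x_0,2^{k+1}r)$, Hölder's inequality and the critical relation $\sum_i 1/p_i'=m-\alpha/n$ give $\int_{\mathrm{shell}}\prod_i|f_i|\,d\vec y\le\prod_i\|f_i\|_{L^{p_i}}\,m(B(x_0,2^{k+1}r))^{m-\alpha/n}\approx (2^kr)^{mn-\alpha}\prod_i\|f_i\|_{L^{p_i}}$. Hence the $k$-th shell contributes at most $r\cdot\frac{(2^kr)^{mn-\alpha}}{(2^kr)^{mn-\alpha+1}}\prod_i\|f_i\|_{L^{p_i}}=2^{-k}\prod_i\|f_i\|_{L^{p_i}}$, and summing the convergent series $\sum_{k\ge1}2^{-k}$ bounds each non-local term by $\lesssim\prod_i\|f_i\|_{L^{p_i}}$.

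Combining the local term with the finitely many ($2^m-1$) non-local terms and taking the supremum over balls gives $\|\mathcal{T}_{\Omega,\alpha;m}(\vec f)\|_{\mathrm{BMO}}\lesssim\prod_i\|f_i\|_{L^{p_i}}$. I expect the oscillation estimate to be the main obstacle: one must establish the comparability of $\|(\xi-y_1,\dots,\xi-y_m)\|$ along the entire segment $[x_0,x]$ when only a single coordinate is certified to lie outside $2B$, and then verify that the critical balance $\sum_i 1/p_i=\alpha/n$ makes the measure power $(2^kr)^{mn-\alpha}$ exactly absorb the kernel growth, the one extra derivative supplying the summable factor $2^{-k}$. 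It is precisely here that $\Omega\equiv1$ is indispensable: for a merely $L^s(\mathbf S^{n-1})$ kernel the difference $K(x;\vec y)-K(x_0;\vec y)$ cannot be dominated via $\nabla K$, and the smoothness estimate—hence the $\mathrm{BMO}$ bound—fails.
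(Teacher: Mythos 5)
The paper never proves Theorem \ref{maximalthm3}: it states the result and refers the reader to Tang \cite{tang} (``This result was first given in \cite{tang}''), so there is no in-house argument to compare against, and your proposal is a self-contained substitute for that citation. Your route is the natural one --- the multilinear version of the classical argument that $I_\alpha$ maps $L^{n/\alpha}$ into $\mathrm{BMO}$ --- and its core estimates are correct. The local term works exactly as you computed: $\alpha/n<m$ forces some $p_i>1$, so exponents $\tilde p_i\le p_i$ with $\sum_i 1/\tilde p_i=\alpha/n+\delta$ exist, Theorem \ref{kenigstein} applies at the sub-critical exponent $\tilde q=1/\delta$, and the factors $m(B)^{-\delta}$ and $m(B)^{\delta}$ cancel (with Kolmogorov's inequality covering $\min_i p_i=1$). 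The comparability you flag as the main obstacle does hold: if $|x_0-y_{j_0}|\ge 2r$ and $\xi\in[x_0,x]$ with $x\in B$, then $|\xi-y_{j_0}|\ge\tfrac12|x_0-y_{j_0}|$, while any coordinate with $|x_0-y_j|<2r$ is itself dominated by $|x_0-y_{j_0}|$, whence $\|(x_0-y_1,\dots,x_0-y_m)\|\le 2\sqrt{m+1}\,\|(\xi-y_1,\dots,\xi-y_m)\|$; combined with $|\nabla\Phi(z)|\lesssim\|z\|^{-(mn-\alpha+1)}$ this justifies your kernel-difference bound. On the dyadic shells the critical relation $\sum_i 1/p_i'=m-\alpha/n$ produces the factor $(2^kr)^{mn-\alpha}$, which against $(2^kr)^{-(mn-\alpha+1)}$ and the factor $r$ leaves exactly $2^{-k}$, summable over the $2^m-1$ non-local terms. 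Note also that your argument gives the full range $0<\alpha<mn$ directly, consistent with the paper's remark (after Theorem \ref{maximalthm6}) that the restriction $(m-1)n<\alpha<mn$ in \cite{tang} can be removed.

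The one genuine gap is the definition of $c_B$. You take $c_B$ to be the sum of the non-local terms evaluated at $x_0$, and then concede that ``the individual centre values may diverge''; but if they diverge, $c_B$ is not a constant and $\frac{1}{m(B)}\int_B|\mathcal{T}_{\Omega,\alpha;m}(\vec f)(x)-c_B|\,dx$ is meaningless --- indeed, for general $\vec f$ in the critical product space, $\mathcal{T}_{\Omega,\alpha;m}(\vec f)$ can be identically $+\infty$, a defect already latent in the statement of the theorem itself. The standard repair: first assume each $f_i$ is bounded with compact support. Then each non-local term at $x_0$ is an integral of a bounded function (its integrand satisfies $\|(x_0-y_1,\dots,x_0-y_m)\|\ge 2r$ on the support) over a bounded set, hence finite, so your $c_B$ is a legitimate constant; the local term is finite a.e.\ on $B$ because your own estimate places it in $L^{\tilde q}(B)$. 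The bound $\lesssim\prod_{i=1}^m\|f_i\|_{L^{p_i}}$, being uniform over this dense class, then extends to all $\vec f$ with the operator at the critical exponent understood modulo constants, exactly as in the linear theory. With this routine addition your proof is complete.
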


This result was first given in \cite{tang}(as far as we know). For the linear (weighted) case, see \cite{muckenhoupt2}.

\section{Boundedness of $\mathcal{T}_{\Omega,\alpha;m}$ and $\mathcal{M}_{\Omega,\alpha;m}$ on Morrey spaces}
\label{3}
In this section, we investigate the boundedness properties of multilinear fractional integral operator and fractional maximal operator on Morrey spaces.
Let us begin with a lemma, which can be found in \cite{chiarenza} and \cite{mizuhara}. For the weighted version, see \cite{komori}.
\begin{lem}\label{MM}
Let $1<p<\infty$ and $0<\kappa<1$. Then the Hardy--Littlewood maximal operator $M$ is bounded on $L^{p,\kappa}(\mathbb R^n)$, and bounded from $L^{1,\kappa}(\mathbb R^n)$ into $WL^{1,\kappa}(\mathbb R^n)$.
\end{lem}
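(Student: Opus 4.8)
The plan is to run the standard local decomposition argument adapted to Morrey spaces, reducing both assertions to the classical mapping properties of $M$ on $\mathbb{R}^n$ that have already been recalled (the strong $(p,p)$ inequality for $1<p<\infty$ and the weak $(1,1)$ inequality for $p=1$). Both statements follow the same scheme; the only difference is which endpoint bound is fed into the local piece. To control $\|Mf\|_{L^{p,\kappa}}$ (respectively $\|Mf\|_{WL^{1,\kappa}}$), I would fix an arbitrary ball $B=B(x_0,r)\subset\mathbb{R}^n$ and estimate $m(B)^{-\kappa/p}\|Mf\cdot\chi_B\|_{L^p}$ (respectively the distributional quantity) uniformly in $B$. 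The first move is to split $f=f_1+f_2$ with $f_1:=f\cdot\chi_{2B}$ and $f_2:=f\cdot\chi_{(2B)^{\complement}}$; since $Mf\le Mf_1+Mf_2$ by sublinearity, it suffices to dominate each piece by $\|f\|_{L^{p,\kappa}}$.

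For the near piece $f_1$, in the case $1<p<\infty$ I would invoke the strong $(p,p)$ boundedness of $M$ to write $\|Mf_1\|_{L^p(B)}\le\|Mf_1\|_{L^p}\lesssim\|f_1\|_{L^p}=\|f\|_{L^p(2B)}$, and then use the Morrey bound $\|f\|_{L^p(2B)}\le m(2B)^{\kappa/p}\|f\|_{L^{p,\kappa}}\lesssim m(B)^{\kappa/p}\|f\|_{L^{p,\kappa}}$; dividing by $m(B)^{\kappa/p}$ closes this piece. For $p=1$ the identical computation, with the weak $(1,1)$ inequality in place of the strong bound, gives $\lambda\cdot m(\{x\in B:Mf_1(x)>\lambda\})\lesssim\|f\|_{L^1(2B)}\lesssim m(B)^{\kappa}\|f\|_{L^{1,\kappa}}$, which is exactly what the weak Morrey norm requires.

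The substantive part is the far piece $f_2$, where the hypothesis $\kappa<1$ is genuinely used. For $x\in B$ and any radius $\rho>0$ for which $B(x,\rho)$ meets $\mathrm{supp}\,f_2\subset(2B)^{\complement}$, a triangle-inequality argument forces $\rho>r$ and $B(x,\rho)\subset B(x_0,2\rho)$. Applying H\"older's inequality on $B(x_0,2\rho)$ together with the Morrey estimate $\|f\|_{L^p(B(x_0,2\rho))}\le m(B(x_0,2\rho))^{\kappa/p}\|f\|_{L^{p,\kappa}}$, the corresponding average is bounded by $C\|f\|_{L^{p,\kappa}}\,\rho^{\,n(\kappa-1)/p}$. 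Because $\kappa<1$ this exponent is \emph{negative}, so the supremum over the admissible range $\rho>r$ is finite and controlled by its value near $\rho=r$, yielding the pointwise bound $Mf_2(x)\lesssim\|f\|_{L^{p,\kappa}}\,m(B)^{(\kappa-1)/p}$ for every $x\in B$. For $p>1$ I would integrate this constant over $B$, producing $\|Mf_2\|_{L^p(B)}\lesssim\|f\|_{L^{p,\kappa}}\,m(B)^{\kappa/p}$, and for $p=1$ I would insert the constant bound into the distribution function, splitting according to whether $\lambda$ exceeds it or not; in both cases the far piece is absorbed into $C\|f\|_{L^{p,\kappa}}$. Combining the two pieces and taking the supremum over all balls $B$ finishes the proof.

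I expect the only real obstacle to be this $f_2$ estimate: one must verify carefully that every ball $B(x,\rho)$ reaching outside $2B$ satisfies $\rho\gtrsim r$, and that the resulting power $n(\kappa-1)/p$ of $\rho$ is strictly negative. This is precisely where $0<\kappa<1$ is indispensable — at the forbidden endpoint $\kappa=1$ the exponent vanishes and the supremum over $\rho$ diverges, which is consistent with $L^{p,1}=L^{\infty}$ and the boundedness of $M$ there being a separate (trivial) matter.
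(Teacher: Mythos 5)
Your proof is correct, and it is exactly the classical Chiarenza--Frasca argument: the paper itself gives no proof of Lemma \ref{MM}, citing \cite{chiarenza} and \cite{mizuhara} instead, and the decomposition $f=f\chi_{2B}+f\chi_{(2B)^{\complement}}$ with the classical strong $(p,p)$/weak $(1,1)$ bounds for the near piece and the pointwise bound $Mf_2(x)\lesssim\|f\|_{L^{p,\kappa}}\,m(B)^{(\kappa-1)/p}$ for the far piece is precisely the proof found in those references. All steps check out, including the geometric observation that a ball centered in $B$ meeting $(2B)^{\complement}$ has radius exceeding $r$, and your identification of $\kappa<1$ as the point where the far-piece supremum converges.
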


Based on Lemma \ref{MM}, we can establish the corresponding estimates for the multilinear maximal operator $\mathcal{M}$ on Morrey spaces, which plays an important role in our proofs of main theorems.
\begin{lem}\label{multimaxmorrey}
Let $0<\kappa<1$, $1\leq p_1,p_2,\dots,p_m<\infty$, $0<p<\infty$ and
\begin{equation*}
1/p=1/{p_1}+1/{p_2}+\cdots+1/{p_m}.
\end{equation*}
\begin{enumerate}
  \item If $1<p_i<\infty$, $i=1,2,\dots,m$, then the inequality
  \begin{equation*}
    \big\|\mathcal{M}(\vec{f})\big\|_{L^{p,\kappa}}\leq C\prod_{i=1}^m\big\|f_i\big\|_{L^{p_i,\kappa}}
  \end{equation*}
  holds for every $\vec{f}=(f_1,f_2,\dots,f_m)\in L^{p_1,\kappa}(\mathbb R^n)\times L^{p_2,\kappa}(\mathbb R^n)\times \cdots\times L^{p_m,\kappa}(\mathbb R^n)$.
  \item If $\min\{p_1,p_2,\dots,p_m\}=1$, then the inequality
\begin{equation*}
\big\|\mathcal{M}(\vec{f})\big\|_{WL^{p,\kappa}}\leq C\prod_{i=1}^m\big\|f_i\big\|_{L^{p_i,\kappa}}
\end{equation*}
holds for every $\vec{f}=(f_1,f_2,\dots,f_m)\in L^{p_1,\kappa}(\mathbb R^n)\times L^{p_2,\kappa}(\mathbb R^n)\times \cdots\times L^{p_m,\kappa}(\mathbb R^n)$.
\end{enumerate}
\end{lem}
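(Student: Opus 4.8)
The plan is to reduce the whole statement to the linear Hardy--Littlewood maximal operator $M$ by means of the elementary pointwise domination
\begin{equation*}
\mathcal{M}(\vec{f})(x)=\sup_{r>0}\prod_{i=1}^m\frac{1}{m(B(x,r))}\int_{B(x,r)}|f_i(y_i)|\,dy_i\leq\prod_{i=1}^m Mf_i(x),\quad\forall~x\in\mathbb R^n,
\end{equation*}
which holds because a supremum of a product taken over a \emph{common} radius $r$ is at most the product of the individual suprema (this is exactly the assertion recorded after \eqref{wangM} that $\mathcal{M}$ is smaller than the $m$-fold product of $M$). With this in hand the problem decouples into $m$ single-function estimates, each governed by Lemma \ref{MM}, once a H\"{o}lder-type inequality adapted to the Morrey scale is available.

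The second ingredient I would record is a Morrey H\"{o}lder inequality: if $1/p=\sum_{i=1}^m 1/{p_i}$, then $\big\|g_1\cdots g_m\big\|_{L^{p,\kappa}}\leq\prod_{i=1}^m\big\|g_i\big\|_{L^{p_i,\kappa}}$. This is a one-line computation on each ball $B$: the ordinary H\"{o}lder inequality gives $\|(g_1\cdots g_m)\chi_B\|_{L^p}\leq\prod_i\|g_i\chi_B\|_{L^{p_i}}$, while the normalizing factor splits as $m(B)^{-\kappa/p}=\prod_i m(B)^{-\kappa/{p_i}}$ precisely because $\kappa/p=\sum_i\kappa/{p_i}$; distributing the factors and taking the supremum over $B$ yields the claim. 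Combining the pointwise bound with this inequality and then the strong-type part of Lemma \ref{MM} (applicable since each $p_i>1$) gives
\begin{equation*}
\big\|\mathcal{M}(\vec{f})\big\|_{L^{p,\kappa}}\leq\Big\|\prod_{i=1}^m Mf_i\Big\|_{L^{p,\kappa}}\leq\prod_{i=1}^m\big\|Mf_i\big\|_{L^{p_i,\kappa}}\leq C\prod_{i=1}^m\big\|f_i\big\|_{L^{p_i,\kappa}},
\end{equation*}
which settles part (1).

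For part (2) the difficulty is that when some $p_i=1$ only the weak bound $\|Mf_i\|_{WL^{1,\kappa}}\lesssim\|f_i\|_{L^{1,\kappa}}$ from Lemma \ref{MM} is at our disposal, so a weak-type version of the argument is needed. I would work ball by ball as dictated by the weak Morrey norm, write $(\prod_i Mf_i)\chi_B=\prod_i(Mf_i\cdot\chi_B)$, and apply a weak-type H\"{o}lder inequality $\|\prod_i h_i\|_{L^{p,\infty}}\lesssim\prod_i\|h_i\|_{L^{p_i,\infty}}$ (valid for $1/p=\sum_i 1/{p_i}$, obtained by iterating the two-function case). Distributing $m(B)^{-\kappa/p}=\prod_i m(B)^{-\kappa/{p_i}}$ then reduces each factor to a localized Morrey quantity: for the indices with $p_i=1$ one recognizes exactly the weak Morrey norm and invokes the weak part of Lemma \ref{MM}, whereas for the indices with $p_i>1$ one uses the trivial embedding $\|Mf_i\chi_B\|_{L^{p_i,\infty}}\leq\|Mf_i\chi_B\|_{L^{p_i}}$ together with the strong part of Lemma \ref{MM}. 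Taking the supremum over all balls $B$ completes the proof. I expect the main obstacle to be correctly formulating and applying the weak-type H\"{o}lder inequality in the Morrey setting, together with the bookkeeping needed to handle the mixed regime in which some exponents equal $1$ and the rest exceed $1$; the remaining steps are a routine transcription of the strong-type computation.
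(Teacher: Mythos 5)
Your proposal is correct and follows essentially the same route as the paper's own proof: the pointwise domination $\mathcal{M}(\vec{f})\leq\prod_{i=1}^m Mf_i$, the Morrey--H\"older inequality obtained by splitting $m(B)^{-\kappa/p}=\prod_i m(B)^{-\kappa/p_i}$ on each ball, Lemma \ref{MM} for the single-operator bounds, and a ball-by-ball weak-type H\"older argument for part (2). The only cosmetic difference is that the paper invokes a mixed strong/weak H\"older inequality (strong $L^{p_j}$ factors for $p_j>1$, weak $L^{p_i,\infty}$ factors for $p_i=1$) in one step, whereas you use the all-weak H\"older inequality followed by the embedding $\|\cdot\|_{L^{p_i,\infty}}\leq\|\cdot\|_{L^{p_i}}$ — these are trivially equivalent.
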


\begin{proof}
We first claim that the following inequality
\begin{equation}\label{multimowang}
\Big\|\prod_{i=1}^m \mathcal{F}_i\Big\|_{L^{p,\kappa}}\leq \prod_{i=1}^m\big\|\mathcal{F}_i\big\|_{L^{p_i,\kappa}}
\end{equation}
holds for any $\mathcal{F}_i\in L^{p_i,\kappa}$ with $1\leq p_i<\infty$ and $0<\kappa<1$, $i=1,2,\dots,m$. In fact, for any given ball $\mathcal{B}$ in $\mathbb R^n$, by using H\"{o}lder's inequality and the definition of $L^{p,\kappa}(\mathbb R^n)$, we can deduce that
\begin{equation*}
\begin{split}
&\bigg(\frac{1}{m(\mathcal{B})^{\kappa}}\int_{\mathcal{B}}\big|\mathcal{F}_1(x)\mathcal{F}_2(x)\cdots \mathcal{F}_m(x)\big|^p\,dx\bigg)^{1/p}\\
&=\frac{1}{m(\mathcal{B})^{\kappa/p}}\Big\|\prod_{i=1}^m \mathcal{F}_i\Big\|_{L^p(\mathcal{B})}
\leq\frac{1}{m(\mathcal{B})^{\kappa/p}}\prod_{i=1}^m\big\|\mathcal{F}_i\big\|_{L^{p_i}(\mathcal{B})}\\
&\leq\frac{1}{m(\mathcal{B})^{\kappa/p}}\prod_{i=1}^m\big\|\mathcal{F}_i\big\|_{L^{p_i,\kappa}}
\cdot m(\mathcal{B})^{\kappa/{p_i}}=\prod_{i=1}^m\big\|\mathcal{F}_i\big\|_{L^{p_i,\kappa}},
\end{split}
\end{equation*}
where in the last step we have used the fact that $1/p=1/{p_1}+1/{p_2}+\cdots+1/{p_m}$. Thus, the desired inequality \eqref{multimowang} follows by taking the supremum over all balls $\mathcal{B}$ in $\mathbb R^n$. Recall that by definition,
\begin{equation}\label{wang830729}
\mathcal{M}(f_1,f_2,\dots,f_m)(x)\leq \prod_{i=1}^m M(f_i)(x),\quad \forall~x\in\mathbb R^n.
\end{equation}
Therefore, it follows directly from Lemma \ref{MM} and \eqref{multimowang} that
\begin{equation*}
\begin{split}
\big\|\mathcal{M}(\vec{f})\big\|_{L^{p,\kappa}}&\leq \Big\|\prod_{i=1}^m M(f_i)\Big\|_{L^{p,\kappa}}\\
&\leq \prod_{i=1}^m\big\|M(f_i)\big\|_{L^{p_i,\kappa}}\leq C\prod_{i=1}^m\big\|f_i\big\|_{L^{p_i,\kappa}}.
\end{split}
\end{equation*}
Let us now turn to the proof of part (2). Without loss of generality, we may assume that $p_1=\cdots=p_{\ell}=1$ and $p_{\ell+1}=\cdots=p_m>1$, $1\leq\ell\leq m$. Applying H\"{o}lder's inequality for weak $L^p$ spaces (see \cite[p.15]{grafakos}) and the definition of $WL^{p,\kappa}(\mathbb R^n)$, we can deduce that
\begin{equation*}
\begin{split}
&\frac{1}{m(\mathcal{B})^{\kappa/p}}\Big\|\prod_{i=1}^m M(f_i)\cdot\chi_{\mathcal{B}}\Big\|_{L^{p,\infty}}\\
&\lesssim\frac{1}{m(\mathcal{B})^{\kappa/p}}\prod_{i=1}^{\ell}\big\|M(f_i)\cdot\chi_{\mathcal{B}}\big\|_{L^{p_i,\infty}}
\cdot\prod_{j=\ell+1}^m\big\|M(f_j)\cdot\chi_{\mathcal{B}}\big\|_{L^{p_j}}\\
&\leq\frac{1}{m(\mathcal{B})^{\kappa/p}}\prod_{i=1}^{\ell}\big\|M(f_i)\big\|_{WL^{p_i,\kappa}}m(\mathcal{B})^{\kappa/{p_i}}
\cdot\prod_{j=\ell+1}^m\big\|M(f_j)\big\|_{L^{p_j,\kappa}}m(\mathcal{B})^{\kappa/{p_j}}\\
&=\prod_{i=1}^{\ell}\big\|M(f_i)\big\|_{WL^{p_i,\kappa}}\cdot
\prod_{j=\ell+1}^m\big\|M(f_j)\big\|_{L^{p_j,\kappa}},
\end{split}
\end{equation*}
where the last equality follows from the fact that $1/{p_1}+\cdots+1/{p_{\ell}}+1/{p_{\ell+1}}+\cdots+1/{p_m}=1/p$. Therefore, by the pointwise estimate \eqref{wang830729} and Lemma \ref{MM}, we get
\begin{equation*}
\begin{split}
&\frac{1}{m(\mathcal{B})^{\kappa/p}}\big\|\mathcal{M}(\vec{f})\cdot\chi_{\mathcal{B}}\big\|_{L^{p,\infty}}
\leq\frac{1}{m(\mathcal{B})^{\kappa/p}}\Big\|\prod_{i=1}^m M(f_i)\cdot\chi_{\mathcal{B}}\Big\|_{L^{p,\infty}}\\
\leq& C\prod_{i=1}^{\ell}\big\|M(f_i)\big\|_{WL^{p_i,\kappa}}\cdot
\prod_{j=\ell+1}^m\big\|M(f_j)\big\|_{L^{p_j,\kappa}}\\
\leq& C\prod_{i=1}^{\ell}\big\|f_i\big\|_{L^{p_i,\kappa}}\cdot\prod_{j=\ell+1}^m\big\|f_j\big\|_{L^{p_j,\kappa}}
=C\prod_{i=1}^{m}\big\|f_i\big\|_{L^{p_i,\kappa}}.
\end{split}
\end{equation*}
Finally, by taking the supremum over all balls $\mathcal{B}$ in $\mathbb R^n$, we are done.
\end{proof}

\begin{thm}\label{thm2}
Let $\vec{\Omega}=(\Omega_1,\Omega_2,\dots,\Omega_m)\in L^s(\mathbf{S}^{n-1})$ with $1<s\leq\infty$. Let $0<\alpha<mn$, $s'\leq p_1,p_2,\dots,p_m<\infty$, $0<\kappa<1-{(\alpha p)}/n$,
\begin{equation*}
1/p=1/{p_1}+1/{p_2}+\cdots+1/{p_m}\quad \mathrm{and}\quad 1/q=1/p-\alpha/{[n(1-\kappa)]}.
\end{equation*}

$(i)$ If $s'<p_1,p_2,\dots,p_m<\infty$, then for every $p\in(s'/m,n/{\alpha})$, the following inequality
\begin{equation*}
\big\|\mathcal{T}_{\Omega,\alpha;m}(\vec{f})\big\|_{L^{q,\kappa}}\lesssim\prod_{i=1}^m\big\|f_i\big\|_{L^{p_i,\kappa}}
\end{equation*}
holds for all $\vec{f}=(f_1,f_2,\dots,f_m)\in L^{p_1,\kappa}(\mathbb R^n)\times L^{p_2,\kappa}(\mathbb R^n)\times \cdots\times L^{p_m,\kappa}(\mathbb R^n)$.

$(ii)$ If $s'=\min\{p_1,p_2,\dots,p_m\}$, then for every $p\in[s'/m,n/{\alpha})$, the following inequality
\begin{equation*}
\big\|\mathcal{T}_{\Omega,\alpha;m}(\vec{f})\big\|_{WL^{q,\kappa}}\lesssim\prod_{i=1}^m\big\|f_i\big\|_{L^{p_i,\kappa}}
\end{equation*}
holds for all $\vec{f}=(f_1,f_2,\dots,f_m)\in L^{p_1,\kappa}(\mathbb R^n)\times L^{p_2,\kappa}(\mathbb R^n)\times \cdots\times L^{p_m,\kappa}(\mathbb R^n)$.
\end{thm}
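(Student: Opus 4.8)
The plan is to follow the Adams--Hedberg scheme and reduce everything to a single pointwise estimate, namely the Morrey analogue of \eqref{wanghua1},
\begin{equation*}
\big|\mathcal{T}_{\Omega,\alpha;m}(\vec{f})(x)\big|\lesssim
\Big[\mathcal{M}_{s'}(\vec{f})(x)\Big]^{p/q}\cdot\Big[\prod_{i=1}^m\|f_i\|_{L^{p_i,\kappa}}\Big]^{1-p/q},\quad\forall~x\in\mathbb{R}^n,
\end{equation*}
where, as in the proof of Theorem \ref{thm1}, $\mathcal{M}_{s'}(\vec{f})=[\mathcal{M}((\vec{f})^{s'})]^{1/{s'}}$, and the exponent $p/q$ is now governed by the Adams index $1/q=1/p-\alpha/[n(1-\kappa)]$. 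The difference from the Lebesgue case is that the global norms $\|f_i\|_{L^{p_i}}$ are replaced by the Morrey norms $\|f_i\|_{L^{p_i,\kappa}}$. Once this bound is established, both conclusions follow by feeding it into the mapping properties of $\mathcal{M}$ on Morrey spaces recorded in Lemma \ref{multimaxmorrey}.

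To obtain the pointwise inequality I would split the defining integral exactly as before, writing $|\mathcal{T}_{\Omega,\alpha;m}(\vec{f})(x)|\leq\mathrm{I}+\mathrm{II}$ according to whether $\|(x-y_1,\dots,x-y_m)\|<\sigma$ or $\geq\sigma$. The near piece $\mathrm{I}$ is untouched by the Morrey setting: the dyadic decomposition together with \eqref{geq}, H\"older's inequality and \eqref{omega88} again gives $\mathrm{I}\lesssim\sigma^{\alpha}\mathcal{M}_{s'}(\vec{f})(x)$. The essential change occurs in the far piece $\mathrm{II}$. After applying H\"older and \eqref{omega88} to extract the factor $\|\Omega_i\|_{L^s(\mathbf{S}^{n-1})}(2^j\sigma)^{n/{p_i'}}$, rather than bounding the remaining factor by the global $L^{p_i}$ norm I would invoke the definition of the Morrey norm on the ball $B(x,2^j\sigma)$:
\begin{equation*}
\bigg(\int_{B(x,2^j\sigma)}|f_i(y_i)|^{p_i}\,dy_i\bigg)^{1/{p_i}}\leq\|f_i\|_{L^{p_i,\kappa}}\,m\big(B(x,2^j\sigma)\big)^{\kappa/{p_i}}.
\end{equation*}
This inserts the extra factor $(2^j\sigma)^{n\kappa/{p_i}}$; summing the exponents via the identity $\sum_{i}(n/{p_i'}+n\kappa/{p_i})=mn-n(1-\kappa)/p$ and running the geometric series yields $\mathrm{II}\lesssim\sigma^{\alpha-n(1-\kappa)/p}\prod_{i=1}^m\|f_i\|_{L^{p_i,\kappa}}$. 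The series converges precisely because $\alpha-n(1-\kappa)/p<0$, which is equivalent to the standing hypothesis $0<\kappa<1-{(\alpha p)}/n$; this is exactly where the restriction on $\kappa$ is forced.

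Adding the two bounds produces the Morrey analogue of \eqref{main1}, and I would then optimize in $\sigma$ by choosing $\sigma^{n(1-\kappa)/p}=\prod_i\|f_i\|_{L^{p_i,\kappa}}/\mathcal{M}_{s'}(\vec{f})(x)$ so the two terms balance; simplifying the exponents via $p/q=1-{\alpha p}/[n(1-\kappa)]$ gives the displayed pointwise estimate. For part $(i)$, where every $p_i>s'$, I would bound the Morrey norm ball by ball: for a fixed ball $B$,
\begin{equation*}
\bigg(\frac{1}{m(B)^{\kappa}}\int_B\big|\mathcal{T}_{\Omega,\alpha;m}(\vec{f})\big|^q\,dx\bigg)^{1/q}
\lesssim\bigg(\frac{1}{m(B)^{\kappa}}\int_B\big[\mathcal{M}((\vec{f})^{s'})\big]^{p/{s'}}\,dx\bigg)^{1/q}\cdot\Big[\prod_{i=1}^m\|f_i\|_{L^{p_i,\kappa}}\Big]^{1-p/q}.
\end{equation*}
Since each $p_i/{s'}>1$, the strong-type bound of Lemma \ref{multimaxmorrey}(1) applied to the vector $(\vec{f})^{s'}$, together with the scaling identity $\||f_i|^{s'}\|_{L^{p_i/{s'},\kappa}}=\|f_i\|_{L^{p_i,\kappa}}^{s'}$, controls the first factor by $[\prod_i\|f_i\|_{L^{p_i,\kappa}}]^{p/q}$, and taking the supremum over $B$ closes the estimate.

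The genuinely delicate point is part $(ii)$, the endpoint $s'=\min\{p_1,\dots,p_m\}$, where the target is a weak Morrey bound. The plan is to fix a ball $B$ and a level $\lambda>0$ and use the pointwise inequality to contain $\{x\in B:|\mathcal{T}_{\Omega,\alpha;m}(\vec{f})(x)|>\lambda\}$ inside the super-level set $\{x\in B:\mathcal{M}((\vec{f})^{s'})(x)>\Lambda\}$ with the correctly scaled threshold $\Lambda=(\lambda/[\prod_i\|f_i\|_{L^{p_i,\kappa}}]^{1-p/q})^{{s'q}/p}$. Now $\min_i(p_i/{s'})=1$, so I would apply the weak-type Morrey bound of Lemma \ref{multimaxmorrey}(2) to $(\vec{f})^{s'}$ and then unwind the algebra. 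The main obstacle is purely the bookkeeping of the exponents $P=p/{s'}$ and $P/q=p/{(s'q)}$ and of the threshold $\Lambda$, arranged so that the factors $\lambda$ and $m(B)^{\kappa}$ cancel exactly and leave $\prod_i\|f_i\|_{L^{p_i,\kappa}}$ after taking suprema over $\lambda$ and over all balls $B$. Apart from this careful tracking of powers, the argument is structurally identical to the weak-type Lebesgue case already carried out in Theorem \ref{thm1}.
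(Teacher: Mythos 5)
Your proposal is correct and follows essentially the same route as the paper: the Adams-type pointwise bound $|\mathcal{T}_{\Omega,\alpha;m}(\vec{f})(x)|\lesssim[\mathcal{M}_{s'}(\vec{f})(x)]^{p/q}[\prod_i\|f_i\|_{L^{p_i,\kappa}}]^{1-p/q}$ obtained by inserting the Morrey norm on the ball $B(x,2^j\sigma)$ in the far piece, optimizing in $\sigma$ via $\sigma^{n(1-\kappa)/p}=\prod_i\|f_i\|_{L^{p_i,\kappa}}/\mathcal{M}_{s'}(\vec{f})(x)$, and then invoking the strong/weak Morrey bounds for $\mathcal{M}$ from Lemma \ref{multimaxmorrey} together with the scaling identity $\||f_i|^{s'}\|_{L^{p_i/s',\kappa}}=\|f_i\|_{L^{p_i,\kappa}}^{s'}$. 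Your convergence condition for the geometric series, the exponent identity $\sum_i(n/p_i'+n\kappa/p_i)=mn-n(1-\kappa)/p$, and the threshold $\Lambda$ in the weak-type argument all coincide with the paper's computations, so there is nothing to correct.
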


\begin{rem}
For the linear case $m=1$, a few historical remarks are illustrated below.
\begin{itemize}
  \item In \cite{peetre}, Peetre established the boundedness properties of the Riesz potential operator $I_{\alpha}=\mathcal{I}_{\alpha;1}$ and fractional maximal operator $M_{\alpha}=\mathcal{M}_{\alpha;1}$ on Morrey spaces. Let $0<\alpha<n$, $1<p<n/{\alpha}$ and $1/q=1/p-\alpha/n$. Peetre showed that if $0<\kappa<p/q$, then the fractional maximal operator $M_\alpha$ and the Riesz potential operator $I_{\alpha}$ are bounded from $L^{p,\kappa}(\mathbb R^n)$ to $L^{q,\kappa^{*}}(\mathbb R^n)$ with $\kappa^{*}:={(\kappa q)}/p$. Moreover, if $p=1$ and $0<\kappa<1/q$, then the operators $M_{\alpha}$ and $I_{\alpha}$ are bounded from $L^{1,\kappa}(\mathbb R^n)$ to $WL^{q,\kappa^{*}}(\mathbb R^n)$ with $q:=n/{(n-\alpha)}$ and $\kappa^{*}:=\kappa q$ (see also \cite{komori} for the weighted version).
  \item In \cite{adams}, Adams showed that if $0<\kappa<1-{(\alpha p)}/n$ and $1/q=1/p-\alpha/{[n(1-\kappa)]}$, then the fractional maximal operator $M_\alpha$ and the Riesz potential operator $I_{\alpha}$ are bounded from $L^{p,\kappa}(\mathbb R^n)$ to $L^{q,\kappa}(\mathbb R^n)$, when $0<\alpha<n$ and $1<p<n/{\alpha}$. Moreover, if $p=1$ and $0<\kappa<1-\alpha/n$, then the operators $M_{\alpha}$ and $I_{\alpha}$ are bounded from $L^{1,\kappa}(\mathbb R^n)$ to $WL^{q,\kappa}(\mathbb R^n)$ with $1/q:=1-\alpha/{[n(1-\kappa)]}$.
      It is worth pointing out that Adams's result is an essential improvement of Peetre's result. It can be shown that for $(q,\kappa)\in(1,\infty)\times(0,1)$, the inclusion relation
\begin{equation}\label{inclusion1}
L^{q,\kappa}(\mathbb R^n)\subset L^{q^{*},\kappa^{*}}(\mathbb R^n)
\end{equation}
holds for $q^{*}<q$ and $1-\kappa^{*}=(1-\kappa)(q^{*}/q)$, and
\begin{equation*}
\big\|f\big\|_{L^{q^{*},\kappa^{*}}}\leq\big\|f\big\|_{L^{q,\kappa}}.
\end{equation*}
Indeed, by H\"{o}lder's inequality, we know that for any ball $B$ in $\mathbb R^n$ and $1\leq q^{*}<q<\infty$,
\begin{equation*}
\begin{split}
&\bigg(\frac{1}{m(B)^{\kappa^{*}}}\int_B|f(x)|^{q^*}\,dx\bigg)^{1/{q^*}}\\
&=m(B)^{{(1-\kappa^*)}/{q^*}}\bigg(\frac{1}{m(B)}\int_B|f(x)|^{q^*}\,dx\bigg)^{1/{q^*}}\\
&\leq m(B)^{{(1-\kappa^*)}/{q^*}}\bigg(\frac{1}{m(B)}\int_B|f(x)|^{q}\,dx\bigg)^{1/{q}}\\
&=\bigg(\frac{1}{m(B)^{\kappa}}\int_B|f(x)|^{q}\,dx\bigg)^{1/{q}},
\end{split}
\end{equation*}
where in the last equality we have used the fact that $\kappa/q=1/q-{(1-\kappa^*)}/{q^*}$. This gives \eqref{inclusion1}. Moreover, using H\"{o}lder's inequality for weak $L^q$ spaces and repeating the argument above, we can also obtain
\begin{equation}\label{inclusion2}
WL^{q,\kappa}(\mathbb R^n)\subset WL^{q^{*},\kappa^{*}}(\mathbb R^n)
\end{equation}
whenever $1\leq q^{*}<q<\infty$ and $1-\kappa^{*}=(1-\kappa)(q^{*}/q)$, and
\begin{equation*}
\big\|f\big\|_{WL^{q^{*},\kappa^{*}}}\leq\big\|f\big\|_{WL^{q,\kappa}}.
\end{equation*}
Thus, by \eqref{inclusion1} and \eqref{inclusion2}, we can see that for given $f\in L^{p,\kappa}(\mathbb R^n)$,
\begin{equation*}
I_{\alpha}(f)(\mathrm{or}~ M_{\alpha}(f))\in L^{q,\kappa}(\mathbb R^n)\Longrightarrow
I_{\alpha}(f)(\mathrm{or}~ M_{\alpha}(f))\in L^{q^{*},\kappa^{*}}(\mathbb R^n)
\end{equation*}
whenever $1/{q^*}=1/p-\alpha/n$ and $\kappa^{*}={(\kappa q^{*})}/p$, and for $f\in L^{1,\kappa}(\mathbb R^n)$,
\begin{equation*}
I_{\alpha}(f)(\mathrm{or}~ M_{\alpha}(f))\in WL^{q,\kappa}(\mathbb R^n)\Longrightarrow
I_{\alpha}(f)(\mathrm{or}~ M_{\alpha}(f))\in WL^{q^{*},\kappa^{*}}(\mathbb R^n)
\end{equation*}
whenever $q^{*}=n/{(n-\alpha)}$ and $\kappa^{*}=\kappa {q^*}$.
 \item Moreover, in \cite{lu2}, Lu, Yang and Zhou showed that if $0<\kappa<p/q$, $s'<p<n/{\alpha}$ and $1/q=1/p-{\alpha}/n$, then the fractional integral operator $T_{\Omega,\alpha}=\mathcal{T}_{\Omega,\alpha;1}$ is bounded from $L^{p,\kappa}(\mathbb R^n)$ into $L^{q,\kappa^{*}}(\mathbb R^n)$ with $\kappa^{*}={(\kappa q)}/p$, when $0<\alpha<n$ and $\Omega\in L^s(\mathbf{S}^{n-1})$ with $1<s\leq\infty$ (see also \cite{mizuhara} and \cite{wang} for the weighted case).
\end{itemize}
\end{rem}

\begin{proof}[Proof of Theorem $\ref{thm2}$]
The idea of the proof is due to Adams (see \cite{adams} and \cite{adams1}). We will prove the following result.
\begin{equation}\label{wanghua2}
\begin{split}
\big|\mathcal{T}_{\Omega,\alpha;m}(\vec{f})(x)\big|\lesssim
\Big[\mathcal{M}_{s'}(\vec{f})(x)\Big]^{p/q}\cdot\Big[\prod_{i=1}^m\big\|f_i\big\|_{L^{p_i,\kappa}}\Big]^{1-p/q},
\quad \forall~x\in\mathbb R^n.
\end{split}
\end{equation}
Let $\vec{f}\in L^{p_1,\kappa}(\mathbb R^n)\times \cdots\times L^{p_m,\kappa}(\mathbb R^n)$ with $s'/m\leq p<n/{\alpha}$ and $0<\kappa<1-{(\alpha p)}/n$. In order to prove \eqref{wanghua2}, for any $x\in\mathbb R^n$, we write
\begin{equation*}
\begin{split}
&\big|\mathcal{T}_{\Omega,\alpha;m}(\vec{f})(x)\big|\leq
\int_{(\mathbb R^n)^m}\frac{|\Omega_1(x-y_1)\cdots
\Omega_m(x-y_m)|}{\|(x-y_1,\dots,x-y_m)\|^{mn-\alpha}}
\big|f_1(y_1)\cdots f_m(y_m)\big|\,dy_1\cdots dy_m\\
&=\int_{\|(x-y_1,\dots,x-y_m)\|<\sigma}\frac{|\Omega_1(x-y_1)\cdots\Omega_m(x-y_m)|}{\|(x-y_1,\dots,x-y_m)\|^{mn-\alpha}}
\big|f_1(y_1)\cdots f_m(y_m)\big|\,dy_1\cdots dy_m\\
&+\int_{\|(x-y_1,\dots,x-y_m)\|\geq\sigma}\frac{|\Omega_1(x-y_1)\cdots\Omega_m(x-y_m)|}{\|(x-y_1,\dots,x-y_m)\|^{mn-\alpha}}
\big|f_1(y_1)\cdots f_m(y_m)\big|\,dy_1\cdots dy_m\\
&:=\mathrm{III+IV},
\end{split}
\end{equation*}
where $\sigma$ is a positive number to be fixed below. By using the same argument involving $\mathrm{I}$ as in Theorem \ref{thm1}, we then have
\begin{equation*}
\begin{split}
\mathrm{III}&=\sum_{j=1}^\infty\int_{2^{-j}\sigma\leq\|(x-y_1,\dots,x-y_m)\|<2^{-j+1}\sigma}
\frac{|\Omega_1(x-y_1)\cdots\Omega_m(x-y_m)|}{\|(x-y_1,\dots,x-y_m)\|^{mn-\alpha}}\prod_{i=1}^m\big|f_i(y_i)\big|\,dy_i\\
&\lesssim\sum_{j=1}^\infty\frac{1}{(2^{-j}\sigma)^{mn-\alpha}}\cdot\prod_{i=1}^m
\big(2^{-j+1}\sigma\big)^{n}\mathcal{M}_{s'}(\vec{f})(x)\lesssim\sigma^{\alpha}\mathcal{M}_{s'}(\vec{f})(x).
\end{split}
\end{equation*}
Let us now give the estimate for the other term $\mathrm{IV}$. It is easy to see that
\begin{equation*}
\begin{split}
\mathrm{IV}&=\sum_{j=1}^\infty\int_{2^{j-1}\sigma\leq\|(x-y_1,\dots,x-y_m)\|<2^j\sigma}
\frac{|\Omega_1(x-y_1)\cdots\Omega_m(x-y_m)|}{\|(x-y_1,\dots,x-y_m)\|^{mn-\alpha}}\prod_{i=1}^m\big|f_i(y_i)\big|\,dy_i\\
&\leq\sum_{j=1}^\infty\frac{1}{(2^{j-1}\sigma)^{mn-\alpha}}
\int_{2^{j-1}\sigma\leq\|(x-y_1,\dots,x-y_m)\|<2^j\sigma}\prod_{i=1}^m|\Omega_i(x-y_i)|\cdot|f_i(y_i)|\,dy_i\\
&\leq\sum_{j=1}^\infty\frac{1}{(2^{j-1}\sigma)^{mn-\alpha}}
\prod_{i=1}^m\int_{|x-y_i|<2^j\sigma}|\Omega_i(x-y_i)|\cdot|f_i(y_i)|\,dy_i,
\end{split}
\end{equation*}
where in the last inequality we have used \eqref{geq}. Since $\Omega_i\in L^s(\mathbf{S}^{n-1})$ with $s\geq p'_i$, then $\Omega_i\in L^{p'_i}(\mathbf{S}^{n-1})$. Using H\"{o}lder's inequality and the estimate \eqref{omega88}, the last integral can be estimated as follows:
\begin{equation}\label{maines4}
\begin{split}
&\int_{|x-y_i|<2^j\sigma}|\Omega_i(x-y_i)|\cdot|f_i(y_i)|\,dy_i\\
&\leq\bigg(\int_{|x-y_i|<2^j\sigma}|\Omega_i(x-y_i)|^{p'_i}\,dy_i\bigg)^{1/{p'_i}}
\bigg(\int_{|x-y_i|<2^j\sigma}|f_i(y_i)|^{p_i}\,dy_i\bigg)^{1/{p_i}}\\
&\lesssim\|\Omega_i\|_{L^s(\mathbf{S}^{n-1})}\big(2^j\sigma\big)^{n/{p'_i}}
\cdot\big(2^j\sigma\big)^{{\kappa n}/{p_i}}\big\|f_i\big\|_{L^{p_i,\kappa}},
\end{split}
\end{equation}
for each integer $j\geq1$. Therefore, by \eqref{maines4}, \eqref{prod1} and \eqref{equality1}, we get
\begin{equation*}
\begin{split}
\mathrm{IV}&\lesssim\big\|\vec{\Omega}\big\|_{L^s(\mathbf{S}^{n-1})}
\sum_{j=1}^\infty\frac{1}{(2^{j-1}\sigma)^{mn-\alpha}}\prod_{i=1}^m\big(2^j\sigma\big)^{n/{p'_i}}
\cdot\big(2^j\sigma\big)^{{\kappa n}/{p_i}}\big\|f_i\big\|_{L^{p_i,\kappa}}\\
&=\big\|\vec{\Omega}\big\|_{L^s(\mathbf{S}^{n-1})}\sum_{j=1}^\infty\frac{1}{(2^{j-1}\sigma)^{mn-\alpha}}
\cdot\big(2^j\sigma\big)^{\sum_{i=1}^m n/{p'_i}}\cdot\big(2^j\sigma\big)^{\sum_{i=1}^m{\kappa n}/{p_i}}
\prod_{i=1}^m\big\|f_i\big\|_{L^{p_i,\kappa}}\\
\end{split}
\end{equation*}
\begin{equation*}
\begin{split}
&\lesssim\big\|\vec{\Omega}\big\|_{L^s(\mathbf{S}^{n-1})}
\sum_{j=1}^\infty\frac{1}{(2^{j}\sigma)^{n/p-{\kappa n}/p-\alpha}}\prod_{i=1}^m\big\|f_i\big\|_{L^{p_i,\kappa}}\\
&\lesssim\sigma^{\alpha-{(1-\kappa)n}/p}\prod_{i=1}^m\big\|f_i\big\|_{L^{p_i,\kappa}},
\end{split}
\end{equation*}
where the last inequality follows from the fact that ${(1-\kappa)}/p-\alpha/n>0$. Collecting all these estimates for $\mathrm{III}$ and $\mathrm{IV}$, we conclude that for any $x\in\mathbb R^n$,
\begin{equation}\label{main2}
\big|\mathcal{T}_{\Omega,\alpha;m}(\vec{f})(x)\big|
\lesssim\Big[\sigma^{\alpha}\mathcal{M}_{s'}(\vec{f})(x)+\sigma^{\alpha-{(1-\kappa)n}/p}
\prod_{i=1}^m\big\|f_i\big\|_{L^{p_i,\kappa}}\Big].
\end{equation}
We now choose $\sigma>0$ such that
\begin{equation*}
\sigma^{\alpha}\mathcal{M}_{s'}(\vec{f})(x)=\sigma^{\alpha-{(1-\kappa)n}/p}
\prod_{i=1}^m\big\|f_i\big\|_{L^{p_i,\kappa}},
\end{equation*}
that is,
\begin{equation*}
\sigma^{{(1-\kappa)n}/p}=\frac{\prod_{i=1}^m\|f_i\|_{L^{p_i,\kappa}}}{\mathcal{M}_{s'}(\vec{f})(x)}.
\end{equation*}
Putting this value of $\sigma$ back into \eqref{main2} and noting that
\begin{equation*}
1-{\alpha p}/{(1-\kappa)n}=p\cdot\big[1/p-\alpha/{(1-\kappa)n}\big]=p/q,
\end{equation*}
we obtain
\begin{equation*}
\begin{split}
\big|\mathcal{T}_{\Omega,\alpha;m}(\vec{f})(x)\big|\lesssim\sigma^{\alpha}\mathcal{M}_{s'}(\vec{f})(x)
&=\Big[\frac{\prod_{i=1}^m\|f_i\|_{L^{p_i,\kappa}}}{\mathcal{M}_{s'}(\vec{f})(x)}\Big]^{{\alpha p}/{(1-\kappa)n}}
\cdot\mathcal{M}_{s'}(\vec{f})(x)\\
&=\Big[\mathcal{M}_{s'}(\vec{f})(x)\Big]^{p/q}\cdot\Big[\prod_{i=1}^m\big\|f_i\big\|_{L^{p_i,\kappa}}\Big]^{1-p/q}.
\end{split}
\end{equation*}
Thus, \eqref{wanghua2} holds. The conclusion of Theorem \ref{thm2} then follows from \eqref{wanghua2} and the boundedness of $\mathcal{M}$ on Morrey spaces (Lemma \ref{multimaxmorrey}).
\begin{itemize}
  \item If $s'<p_1,p_2,\dots,p_m$ and $p<n/{\alpha}$, we first observe that
\begin{equation}\label{fcase}
\begin{split}
\big\||f_i|^{s'}\big\|_{L^{{p_i}/{s'},\kappa}}
=&\sup_{\mathcal{B}\subseteq\mathbb R^n}\bigg(\frac{1}{m(\mathcal{B})^{\kappa}}
\int_{\mathcal{B}}|f_i(x)|^{s'\cdot({p_i}/{s'})}\,dx\bigg)^{{s'}/{p_i}}\\
=&\bigg[\sup_{\mathcal{B}\subseteq\mathbb R^n}
\bigg(\frac{1}{m(\mathcal{B})^{\kappa}}\int_{\mathcal{B}}|f_i(x)|^{p_i}\,dx\bigg)^{1/{p_i}}\bigg]^{s'}\\
=&\Big[\big\|f_i\big\|_{L^{p_i,\kappa}}\Big]^{s'}.
\end{split}
\end{equation}
An application of \eqref{wanghua2} yields
\begin{equation*}
\begin{split}
\big\|\mathcal{T}_{\Omega,\alpha;m}(\vec{f})\big\|_{L^{q,\kappa}}
&\lesssim\sup_{\mathcal{B}\subseteq\mathbb R^n}\bigg(\frac{1}{m(\mathcal{B})^{\kappa}}
\int_{\mathcal{B}}\big|\mathcal{M}_{s'}(\vec{f})(x)\big|^{p}dx\bigg)^{1/q}
\cdot\Big[\prod_{i=1}^m\big\|f_i\big\|_{L^{p_i,\kappa}}\Big]^{1-p/q}\\
&=\sup_{\mathcal{B}\subseteq\mathbb R^n}\bigg(\frac{1}{m(\mathcal{B})^{\kappa}}\int_{\mathcal{B}}
\big|\mathcal{M}(f_1^{s'},f_2^{s'},\dots,f_m^{s'})(x)\big|^{p/{s'}}\,dx\bigg)^{1/{q}}
\cdot\Big[\prod_{i=1}^m\big\|f_i\big\|_{L^{p_i,\kappa}}\Big]^{1-p/q}.
\end{split}
\end{equation*}
Moreover, in view of \eqref{fcase}, \eqref{prod1} and part (1) of Lemma \ref{multimaxmorrey}, we have
\begin{equation*}
\begin{split}
\big\|\mathcal{T}_{\Omega,\alpha;m}(\vec{f})\big\|_{L^{q,\kappa}}
&\lesssim\Big[\prod_{i=1}^m\big\||f_i|^{s'}\big\|_{L^{{p_i}/{s'},\kappa}}\Big]^{p/{(s'q)}}
\cdot\Big[\prod_{i=1}^m\big\|f_i\big\|_{L^{p_i,\kappa}}\Big]^{1-p/q}\\
&=\Big[\prod_{i=1}^m\big\|f_i\big\|_{L^{p_i,\kappa}}^{s'}\Big]^{p/{(s'q)}}
\cdot\Big[\prod_{i=1}^m\big\|f_i\big\|_{L^{p_i,\kappa}}\Big]^{1-p/q}\\
&=\Big[\prod_{i=1}^m\big\|f_i\big\|_{L^{p_i,\kappa}}\Big]^{p/q}
\cdot\Big[\prod_{i=1}^m\big\|f_i\big\|_{L^{p_i,\kappa}}\Big]^{1-p/q}
=\prod_{i=1}^m\big\|f_i\big\|_{L^{p_i,\kappa}}.
\end{split}
\end{equation*}
This shows part $(i)$.
  \item If $s'=\min\{p_1,p_2,\dots,p_m\}$ and $p<n/{\alpha}$, we then follow the same arguments as above and find that \eqref{fcase} remains true in this case. By using the estimate \eqref{wanghua2}, for any given ball $\mathcal{B}$ in $\mathbb R^n$, then we have
\begin{equation*}
\begin{split}
&\frac{1}{m(\mathcal{B})^{\kappa/q}}
\sup_{\lambda>0}\lambda\cdot m\big(\big\{x\in \mathcal{B}:\big|\mathcal{T}_{\Omega,\alpha;m}(\vec{f})(x)\big|>\lambda\big\}\big)^{1/q}\\
&\leq\frac{1}{m(\mathcal{B})^{\kappa/q}}\sup_{\lambda>0}\lambda\cdot m\bigg(\bigg\{x\in \mathcal{B}:\big|\mathcal{M}_{s'}(\vec{f})(x)\big|^{p/q}>\frac{\lambda}{C\prod_{i=1}^m\|f_i\|^{1-p/q}_{L^{p_i,\kappa}}}\bigg\}\bigg)^{1/q}\\
&=\bigg[\frac{1}{m(\mathcal{B})^{(\kappa s')/p}}\sup_{\lambda>0}\lambda^{{(s'q)}/p}
\cdot m\bigg(\bigg\{x\in \mathcal{B}:\big|\mathcal{M}(\vec{f})^{s'}(x)\big|>
\bigg(\frac{\lambda}{C\prod_{i=1}^m\|f_i\|^{1-p/q}_{L^{p_i,\kappa}}}\bigg)^{{(s'q)}/p}\bigg\}\bigg)^{\frac{1}{p/{s'}}}\bigg]^{\frac{p}{(s'q)}}.
\end{split}
\end{equation*}
In view of \eqref{fcase}, \eqref{prod1} and part (2) of Lemma \ref{multimaxmorrey}, the above expression is bounded by
\begin{equation*}
\begin{split}
&C\bigg[\Big(\prod_{i=1}^m\big\|f_i\big\|^{1-p/q}_{L^{p_i,\kappa}}\Big)^{{(s'q)}/p}\bigg]^{\frac{p}{(s'q)}}
\cdot\bigg[\Big\|\mathcal{M}(f_1^{s'},f_2^{s'},\dots,f_m^{s'})\Big\|_{WL^{p/{s'},\kappa}}\bigg]^{\frac{p}{(s'q)}}\\
&\leq C\bigg[\prod_{i=1}^m\big\|f_i\big\|_{L^{p_i,\kappa}}\bigg]^{1-p/q}
\cdot\bigg[\prod_{i=1}^m\big\||f_i|^{s'}\big\|_{L^{{p_i}/{s'},\kappa}}\bigg]^{\frac{p}{(s'q)}}\\
&=C\bigg[\prod_{i=1}^m\big\|f_i\big\|_{L^{p_i,\kappa}}\bigg]^{1-p/q}
\cdot\bigg[\prod_{i=1}^m\big\|f_i\big\|_{L^{p_i,\kappa}}^{s'}\bigg]^{\frac{p}{(s'q)}}\\
&=C\bigg[\prod_{i=1}^m\big\|f_i\big\|_{L^{p_i,\kappa}}\bigg]^{1-p/q}
\cdot\bigg[\prod_{i=1}^m\big\|f_i\big\|_{L^{p_i,\kappa}}\bigg]^{p/q}=C\prod_{i=1}^m\big\|f_i\big\|_{L^{p_i,\kappa}}.
\end{split}
\end{equation*}
This shows part $(ii)$ by taking the supremum over all balls $\mathcal{B}$ in $\mathbb R^n$.
\end{itemize}
The proof of Theorem \ref{thm2} is now complete.
\end{proof}

Next we deal with the endpoint case within the framework of the Morrey space. For $1\leq p<\infty$ and $0<\kappa<1$, we now define the space $(L\log L)^{p,\kappa}(\mathbb R^n)$ as the set of all locally integrable functions $f$ on $\mathbb R^n$ such that
\begin{equation*}
\begin{split}
\big\|f\big\|_{(L\log L)^{p,\kappa}}:=&\sup_{B\subset\mathbb R^n}\frac{1}{m(B)^{\kappa/p}}\big\|f\big\|_{L^p\log L(B)}<+\infty.
\end{split}
\end{equation*}
When $p=1$, we write $(L\log L)^{1,\kappa}(\mathbb R^n)$ and $\|\cdot\|_{(L\log L)^{1,\kappa}}$. This new space was defined and investigated in \cite{sa} and \cite{lida}. By definition, it is obvious that for $(p,\kappa)\in[1,\infty)\times(0,1)$, $(L\log L)^{p,\kappa}(\mathbb R^n)\subset L^{p,\kappa}(\mathbb R^n)$, and
\begin{equation}\label{lplog2}
\big\|f\big\|_{L^{p,\kappa}}\leq \big\|f\big\|_{(L\log L)^{p,\kappa}}.
\end{equation}
In addition, it is easy to verify that for any $1\leq p<\infty$ and $0<\kappa<1$,
\begin{equation}\label{pp2}
\big\||f|^p\big\|_{(L\log L)^{1,\kappa}}\leq \big\|f\big\|^p_{(L\log L)^{p,\kappa}}.
\end{equation}
Recall that the following statement is true:
\begin{equation}\label{Mpp2}
\big\|M(f)\big\|_{L^{1,\kappa}}\leq C\big\|f\big\|_{(L\log L)^{1,\kappa}},
\end{equation}
for all $f\in (L\log L)^{1,\kappa}(\mathbb R^n)$ with $0<\kappa<1$ (see \cite[Proposition 6.1]{lida} and \cite[Corollary 2.21]{sa}). Based on the results mentioned above, we can prove the following result for the endpoint case $\min\{p_1,p_2,\dots,p_m\}=s'$.

\begin{thm}\label{thm8}
Let $\vec{\Omega}=(\Omega_1,\Omega_2,\dots,\Omega_m)\in L^s(\mathbf{S}^{n-1})$ with $1<s\leq\infty$. Let $0<\alpha<mn$, $0<\kappa<1-{(\alpha p)}/n$
\begin{equation*}
1/p=1/{p_1}+1/{p_2}+\cdots+1/{p_m}\quad \mathrm{and}\quad 1/q=1/p-\alpha/{[n(1-\kappa)]}.
\end{equation*}
If $s'=\min\{p_1,p_2,\dots,p_m\}$ and $p<n/{\alpha}$, then the inequality
\begin{equation*}
\big\|\mathcal{T}_{\Omega,\alpha;m}(\vec{f})\big\|_{L^{q,\kappa}}\lesssim
\prod_{i=1}^{\ell}\big\|f_i\big\|_{(L\log L)^{p_i,\kappa}}\cdot\prod_{j=\ell+1}^m\big\|f_j\big\|_{L^{p_j,\kappa}}
\end{equation*}
holds true. Here we suppose that $p_1=\cdots=p_{\ell}=s'$ and $p_{\ell+1}=\cdots=p_m>s'$.
\end{thm}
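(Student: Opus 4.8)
The plan is to run exactly the scheme of the proof of Theorem \ref{thm7}, replacing every Lebesgue ingredient by its Morrey counterpart. The starting point is the pointwise Adams--Hedberg estimate \eqref{wanghua2}, which holds under the present hypotheses. First I would raise both sides to the $q$-th power, average over an arbitrary ball $\mathcal{B}$, take $1/q$-th powers and then the supremum; after rewriting $[\mathcal{M}_{s'}(\vec{f})]^{p}=[\mathcal{M}(f_1^{s'},\dots,f_m^{s'})]^{p/{s'}}$ this yields
\begin{equation*}
\big\|\mathcal{T}_{\Omega,\alpha;m}(\vec{f})\big\|_{L^{q,\kappa}}\lesssim \Big[\big\|\mathcal{M}(f_1^{s'},\dots,f_m^{s'})\big\|_{L^{p/{s'},\kappa}}\Big]^{(p/{s'})\cdot(1/q)}\cdot\Big[\prod_{i=1}^m\|f_i\|_{L^{p_i,\kappa}}\Big]^{1-p/q}.
\end{equation*}
This reduces everything to bounding the single Morrey norm $\|\mathcal{M}(f_1^{s'},\dots,f_m^{s'})\|_{L^{p/{s'},\kappa}}$.

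For that bound I would invoke the pointwise domination \eqref{wang830} together with the multilinear Morrey Hölder inequality \eqref{multimowang} to factor
\begin{equation*}
\big\|\mathcal{M}(f_1^{s'},\dots,f_m^{s'})\big\|_{L^{p/{s'},\kappa}}\le\prod_{i=1}^m\big\|M(|f_i|^{s'})\big\|_{L^{{p_i}/{s'},\kappa}},
\end{equation*}
and then split the indices into two regimes. For $j=\ell+1,\dots,m$ with $p_j>s'$ one has ${p_j}/{s'}>1$, so Lemma \ref{MM} gives the strong boundedness of $M$ on $L^{{p_j}/{s'},\kappa}$, and \eqref{fcase} converts this into $\|M(|f_j|^{s'})\|_{L^{{p_j}/{s'},\kappa}}\lesssim\|f_j\|_{L^{p_j,\kappa}}^{s'}$. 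For the critical indices $i=1,\dots,\ell$ with $p_i=s'$ the exponent is ${p_i}/{s'}=1$, where $M$ is no longer bounded on $L^{1,\kappa}$; here I would instead use the $L\log L$ Morrey endpoint estimate \eqref{Mpp2} followed by \eqref{pp2} to get $\|M(|f_i|^{s'})\|_{L^{1,\kappa}}\lesssim\big\||f_i|^{s'}\big\|_{(L\log L)^{1,\kappa}}\le\|f_i\|_{(L\log L)^{p_i,\kappa}}^{s'}$.

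Combining the two regimes and raising to the power $(p/{s'})\cdot(1/q)$, the exponent $s'$ cancels and the first factor collapses to $\big[\prod_{i=1}^\ell\|f_i\|_{(L\log L)^{p_i,\kappa}}\cdot\prod_{j=\ell+1}^m\|f_j\|_{L^{p_j,\kappa}}\big]^{p/q}$. Finally I would apply \eqref{lplog2} to dominate each $L^{p_i,\kappa}$ norm in the second factor $\big[\prod_{i=1}^m\|f_i\|_{L^{p_i,\kappa}}\big]^{1-p/q}$ by the corresponding $(L\log L)^{p_i,\kappa}$ norm (noting $1-p/q>0$ since $q>p$), so that the two factors recombine, via $p/q+(1-p/q)=1$, into the single product on the right-hand side of the claimed inequality. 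The only genuinely delicate point is the endpoint exponent ${p_i}/{s'}=1$: this is precisely where the weak-type failure of $M$ on $L^{1,\kappa}$ forces the passage to the space $(L\log L)^{p_i,\kappa}$, and carrying the Luxemburg-norm bookkeeping correctly through \eqref{pp2} and \eqref{Mpp2} is the crux of the argument.
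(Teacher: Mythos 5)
Your proposal is correct and coincides essentially step for step with the paper's own proof: the same reduction via \eqref{wanghua2}, the same factorization through \eqref{wang830} and \eqref{multimowang}, the same split into the regimes $p_j>s'$ (Lemma \ref{MM} plus \eqref{fcase}) and $p_i=s'$ (\eqref{Mpp2} with \eqref{pp2}), and the same final recombination using \eqref{lplog2} and $p/q+(1-p/q)=1$. Nothing is missing; your identification of the endpoint exponent $p_i/s'=1$ as the crux is exactly where the paper's estimates \eqref{wang834}--\eqref{wang835} diverge as well.
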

\begin{proof}[Proof of Theorem $\ref{thm8}$]
Taking into account \eqref{wanghua2}, we obtain
\begin{equation*}
\begin{split}
\big\|\mathcal{T}_{\Omega,\alpha;m}(\vec{f})\big\|_{L^{q,\kappa}}
&\lesssim\sup_{\mathcal{B}\subset\mathbb R^n}
\bigg(\frac{1}{m(\mathcal{B})^{\kappa}}\int_{\mathcal{B}}\big|\mathcal{M}_{s'}(\vec{f})(x)\big|^{p}dx\bigg)^{1/q} \cdot\Big[\prod_{i=1}^m\big\|f_i\big\|_{L^{p_i,\kappa}}\Big]^{1-p/q}\\
&=\sup_{\mathcal{B}\subset\mathbb R^n}\bigg(\frac{1}{m(\mathcal{B})^{\kappa}}
\int_{\mathcal{B}}\big|\mathcal{M}(\vec{f})^{s'}(x)\big|^{p/{s'}}\,dx\bigg)^{1/{q}}
\cdot\Big[\prod_{i=1}^m\big\|f_i\big\|_{L^{p_i,\kappa}}\Big]^{1-p/q}.
\end{split}
\end{equation*}
When $p_i>s'$ for some $1\leq i\leq m$ and $0<\kappa<1$, then by the strong-type boundedness of $M$ on Morrey spaces (see Lemma \ref{MM}) and \eqref{fcase}, we get
\begin{equation}\label{wang834}
\big\|M(|f_i|^{s'})\big\|_{L^{p_i/{s'},\kappa}}\leq C\big\||f_i|^{s'}\big\|_{L^{p_i/{s'},\kappa}}=C\big\|f_i\big\|_{L^{p_i,\kappa}}^{s'}.
\end{equation}
On the other hand, when $p_i=s'$ for some $1\leq i\leq m$, it then follows from \eqref{pp2} and \eqref{Mpp2} that
\begin{equation}\label{wang835}
\begin{split}
\big\|M(|f_i|^{s'})\big\|_{L^{p_i/{s'},\kappa}}&=\big\|M(|f_i|^{p_i})\big\|_{L^{1,\kappa}}\\
&\leq C\big\||f_i|^{p_i}\big\|_{(L\log L)^{1,\kappa}}\leq C\big\|f_i\big\|_{(L\log L)^{p_i,\kappa}}^{p_i}.
\end{split}
\end{equation}
Note that
\begin{equation*}
\frac{1}{p/{s'}}=\frac{1}{p_1/{s'}}+\frac{1}{p_2/{s'}}+\cdots+\frac{1}{p_m/{s'}}.
\end{equation*}
When $\min\{p_1,p_2,\dots,p_m\}=s'$, without loss of generality, we may assume that
\begin{equation*}
p_1=p_2=\cdots=p_{\ell}=s',\quad \mbox{and} \quad p_{\ell+1}=p_{\ell+2}=\cdots=p_m>s'.
\end{equation*}
As we have already shown in Section \ref{sec2}, the following pointwise estimate holds.
\begin{equation*}
\mathcal{M}(f_1^{s'},f_2^{s'},\dots,f_m^{s'})(x)\leq \prod_{i=1}^m M(|f_i|^{s'})(x),\quad \forall~x\in\mathbb R^n.
\end{equation*}
Hence, it is concluded from the inequality \eqref{multimowang}(can be viewed as H\"older's inequality for Morrey spaces) that
\begin{equation*}
\begin{split}
\big\|\mathcal{T}_{\Omega,\alpha;m}(\vec{f})\big\|_{L^{q,\kappa}}
&\lesssim\Big[\Big\|\mathcal{M}(f_1^{s'},f_2^{s'},\dots,f_m^{s'})\Big\|_{L^{p/{s'},\kappa}}\Big]^{(p/{s'})\cdot(1/q)}
\cdot\Big[\prod_{i=1}^m\big\|f_i\big\|_{L^{p_i,\kappa}}\Big]^{1-p/q}\\
&\leq\Big[\Big\|\prod_{i=1}^m M(|f_i|^{s'})\Big\|_{L^{p/{s'},\kappa}}\Big]^{(p/{s'})\cdot(1/q)}
\cdot\Big[\prod_{i=1}^m\big\|f_i\big\|_{L^{p_i,\kappa}}\Big]^{1-p/q}\\
&\leq\Big[\prod_{i=1}^m\big\|M(|f_i|^{s'})\big\|_{L^{p_i/{s'},\kappa}}\Big]^{(p/{s'})\cdot(1/q)}
\cdot\Big[\prod_{i=1}^m\big\|f_i\big\|_{L^{p_i,\kappa}}\Big]^{1-p/q}.\\
\end{split}
\end{equation*}
Furthermore, by using the estimates \eqref{lplog2}, \eqref{wang834} and \eqref{wang835}, we finally obtain
\begin{equation*}
\begin{split}
\big\|\mathcal{T}_{\Omega,\alpha;m}(\vec{f})\big\|_{L^{q,\kappa}}
&\lesssim\Big[\prod_{i=1}^{\ell}\big\|f_i\big\|^{s'}_{(L\log L)^{p_i,\kappa}}
\cdot\prod_{j=\ell+1}^m\big\|f_j\big\|_{L^{p_j,\kappa}}^{s'}\Big]^{(p/{s'})\cdot(1/q)}\\
&\times\Big[\prod_{i=1}^m\big\|f_i\big\|_{L^{p_i,\kappa}}\Big]^{1-p/q}\\
&\leq \prod_{i=1}^{\ell}\big\|f_i\big\|_{(L\log L)^{p_i,\kappa}}\cdot\prod_{j=\ell+1}^m\big\|f_j\big\|_{L^{p_j,\kappa}}.
\end{split}
\end{equation*}
This completes the proof of Theorem \ref{thm8}.
\end{proof}

\section{The critical case $\kappa=1-{(\alpha p)}/n$}

We have already shown that $\mathcal{M}_{\Omega,\alpha;m}(\vec{f})$ is controlled by $\mathcal{T}_{|\Omega|,\alpha;m}(\vec{f})$ for any $\vec{f}$.
Thus, as a direct consequence of Theorems \ref{thm2} and \ref{thm8}, we can obtain the corresponding results for the multilinear fractional maximal operator $\mathcal{M}_{\Omega,\alpha;m}$ on Morrey spaces.

\begin{thm}\label{maximalthm4}
Let $\vec{\Omega}=(\Omega_1,\Omega_2,\dots,\Omega_m)\in L^s(\mathbf{S}^{n-1})$ with $1<s\leq\infty$. Let $0<\alpha<mn$, $s'\leq p_1,p_2,\dots,p_m<\infty$, $0<\kappa<1-{(\alpha p)}/n$,
\begin{equation*}
1/p=1/{p_1}+1/{p_2}+\cdots+1/{p_m}\quad \mathrm{and}\quad 1/q=1/p-\alpha/{[n(1-\kappa)]}.
\end{equation*}

$(i)$ If $s'<p_1,p_2,\dots,p_m<\infty$, then for every $p\in(s'/m,n/{\alpha})$, the following inequality
\begin{equation*}
\big\|\mathcal{M}_{\Omega,\alpha;m}(\vec{f})\big\|_{L^{q,\kappa}}\lesssim\prod_{i=1}^m\big\|f_i\big\|_{L^{p_i,\kappa}}
\end{equation*}
holds for all $\vec{f}=(f_1,f_2,\dots,f_m)\in L^{p_1,\kappa}(\mathbb R^n)\times L^{p_2,\kappa}(\mathbb R^n)\times \cdots\times L^{p_m,\kappa}(\mathbb R^n)$.

$(ii)$ If $s'=\min\{p_1,p_2,\dots,p_m\}$, then for every $p\in[s'/m,n/{\alpha})$, the following inequality
\begin{equation*}
\big\|\mathcal{M}_{\Omega,\alpha;m}(\vec{f})\big\|_{WL^{q,\kappa}}\lesssim\prod_{i=1}^m\big\|f_i\big\|_{L^{p_i,\kappa}}
\end{equation*}
holds for all $\vec{f}=(f_1,f_2,\dots,f_m)\in L^{p_1,\kappa}(\mathbb R^n)\times L^{p_2,\kappa}(\mathbb R^n)\times \cdots\times L^{p_m,\kappa}(\mathbb R^n)$.

$(iii)$ If $s'=\min\{p_1,p_2,\dots,p_m\}$ and $p<n/{\alpha}$, then the inequality
\begin{equation*}
\big\|\mathcal{M}_{\Omega,\alpha;m}(\vec{f})\big\|_{L^{q,\kappa}}\lesssim
\prod_{i=1}^{\ell}\big\|f_i\big\|_{(L\log L)^{p_i,\kappa}}\cdot\prod_{j=\ell+1}^m\big\|f_j\big\|_{L^{p_j,\kappa}}
\end{equation*}
holds true. Here we suppose that $p_1=\cdots=p_{\ell}=s'$ and $p_{\ell+1}=\cdots=p_m>s'$.
\end{thm}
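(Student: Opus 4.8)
The plan is to deduce all three parts directly from the corresponding statements for the multilinear fractional integral $\mathcal{T}_{\Omega,\alpha;m}$ (Theorems \ref{thm2} and \ref{thm8}), exploiting the pointwise domination
\begin{equation*}
\mathcal{M}_{\Omega,\alpha;m}(\vec{f})(x)\lesssim\mathcal{T}_{|\Omega|,\alpha;m}(|\vec{f}|)(x),\quad\forall~x\in\mathbb R^n,
\end{equation*}
which was recalled at the beginning of this section. The whole argument rests on two elementary observations. First, replacing each $\Omega_i$ by $|\Omega_i|$ does not alter the hypotheses, since $|\Omega_i|$ is still homogeneous of degree zero and $\big\||\Omega_i|\big\|_{L^s(\mathbf{S}^{n-1})}=\|\Omega_i\|_{L^s(\mathbf{S}^{n-1})}$. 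Second, the input norms on the right-hand sides are unaffected by taking absolute values, i.e. $\big\||f_i|\big\|_{L^{p_i,\kappa}}=\|f_i\|_{L^{p_i,\kappa}}$ and $\big\||f_i|\big\|_{(L\log L)^{p_i,\kappa}}=\|f_i\|_{(L\log L)^{p_i,\kappa}}$, because all of these norms depend only on $|f_i|$.

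First I would record the monotonicity of the target norms: if $0\leq g\leq h$ pointwise, then $\|g\|_{L^{q,\kappa}}\leq\|h\|_{L^{q,\kappa}}$ and $\|g\|_{WL^{q,\kappa}}\leq\|h\|_{WL^{q,\kappa}}$, both being immediate from the defining formulas in Section \ref{sec12}. Combining this monotonicity with the pointwise domination gives, for either target quasi-norm,
\begin{equation*}
\big\|\mathcal{M}_{\Omega,\alpha;m}(\vec{f})\big\|_{L^{q,\kappa}}\lesssim\big\|\mathcal{T}_{|\Omega|,\alpha;m}(|\vec{f}|)\big\|_{L^{q,\kappa}}\quad\text{and}\quad\big\|\mathcal{M}_{\Omega,\alpha;m}(\vec{f})\big\|_{WL^{q,\kappa}}\lesssim\big\|\mathcal{T}_{|\Omega|,\alpha;m}(|\vec{f}|)\big\|_{WL^{q,\kappa}}.
\end{equation*}
For part $(i)$ I would apply Theorem \ref{thm2}$(i)$ to the kernel vector $|\vec{\Omega}|$ and the data $|\vec{f}|$, bounding the right-hand side by $\prod_{i=1}^m\big\||f_i|\big\|_{L^{p_i,\kappa}}=\prod_{i=1}^m\|f_i\|_{L^{p_i,\kappa}}$. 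For part $(ii)$ I would invoke Theorem \ref{thm2}$(ii)$ in exactly the same manner, now with the weak Morrey norm. For part $(iii)$, keeping the same exponents, I would invoke Theorem \ref{thm8}, whose right-hand side carries the $(L\log L)^{p_i,\kappa}$ norms for the indices $i\leq\ell$ with $p_i=s'$ and ordinary Morrey norms for the remaining indices; by the invariance identities above these are unchanged under $f_i\mapsto|f_i|$, which yields the stated bound.

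Since every genuine analytic difficulty has already been resolved in the proofs of Theorems \ref{thm2} and \ref{thm8}, the present argument is purely formal and I do not expect any serious obstacle. The only points needing a line of justification are the two norm-invariance identities and the monotonicity of the (weak) Morrey quasi-norms under pointwise domination, all of which are immediate from the definitions. A final bookkeeping check is that the exponent relations in the hypotheses---namely $1/p=\sum_{i=1}^m 1/p_i$ and $1/q=1/p-\alpha/[n(1-\kappa)]$ together with $0<\kappa<1-(\alpha p)/n$---are precisely those required by Theorems \ref{thm2} and \ref{thm8}, so no adjustment of parameters is needed and the conclusion follows at once.
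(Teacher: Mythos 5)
Your proposal is correct and is essentially the paper's own argument: the paper likewise deduces Theorem \ref{maximalthm4} from Theorems \ref{thm2} and \ref{thm8} via the pointwise domination $\mathcal{M}_{\Omega,\alpha;m}(\vec{f})\lesssim\mathcal{T}_{|\Omega|,\alpha;m}(|\vec{f}|)$ established at the start of Section 3 and recalled before the theorem. The only difference is that you spell out the routine justifications (monotonicity of the (weak) Morrey quasi-norms and invariance of all the norms under $\Omega_i\mapsto|\Omega_i|$, $f_i\mapsto|f_i|$) that the paper leaves implicit.
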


For the critical case $\kappa=1-{(\alpha p)}/n$, by using the same procedure as in the proof of Theorem \ref{maximalthm2}, we are able to show that
the multilinear fractional maximal operator $\mathcal{M}_{\Omega,\alpha;m}$ is bounded from $L^{p_1,\kappa}(\mathbb R^n)\times L^{p_2,\kappa}(\mathbb R^n)\times\cdots\times L^{p_m,\kappa}(\mathbb R^n)$ into $L^{\infty}(\mathbb R^n)$.

\begin{thm}\label{maximalthm5}
Let $\vec{\Omega}=(\Omega_1,\Omega_2,\dots,\Omega_m)\in L^s(\mathbf{S}^{n-1})$ with $1<s\leq\infty$. Suppose that
\begin{equation*}
0<\alpha<mn,\qquad  s'\leq p_1,p_2,\dots,p_m<\infty,
\end{equation*}
and
\begin{equation*}
1/p=1/{p_1}+1/{p_2}+\cdots+1/{p_m},\quad 1/q=1/p-{\alpha}/n.
\end{equation*}
If $\kappa=p/q=1-{(\alpha p)}/n$, then the following inequality
\begin{equation*}
\big\|\mathcal{M}_{\Omega,\alpha;m}(\vec{f})\big\|_{L^{\infty}}\lesssim\prod_{i=1}^m\big\|f_i\big\|_{L^{p_i,\kappa}}
\end{equation*}
holds for all $\vec{f}=(f_1,f_2,\dots,f_m)\in L^{p_1,\kappa}(\mathbb R^n)\times L^{p_2,\kappa}(\mathbb R^n)
\times\cdots\times L^{p_m,\kappa}(\mathbb R^n)$.
\end{thm}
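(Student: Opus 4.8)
The plan is to follow the scheme of the proof of Theorem \ref{maximalthm2} almost verbatim, replacing the Lebesgue bound on each factor by its Morrey analogue and then checking that the powers of $m(B(x,r))$ collapse precisely because of the critical relation $\kappa=1-{(\alpha p)}/n$. First I would fix $x\in\mathbb R^n$ and a radius $r>0$ and estimate each of the $m$ factors
\[
\frac{1}{m(B(x,r))^{1-\alpha/{mn}}}\int_{|x-y_i|<r}|\Omega_i(x-y_i)|\cdot|f_i(y_i)|\,dy_i
\]
by H\"older's inequality with exponents $s$ and $s'$. The kernel factor $\big(\int_{|x-y_i|<r}|\Omega_i(x-y_i)|^s\,dy_i\big)^{1/s}$ is controlled by $\|\Omega_i\|_{L^s(\mathbf{S}^{n-1})}\,m(B(x,r))^{1/s}$ via \eqref{omega88}, exactly as before.

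Next I would bound the remaining factor $\big(\int_{|x-y_i|<r}|f_i|^{s'}\big)^{1/{s'}}$ using the Morrey norm instead of the Lebesgue norm. When $p_i=s'$ this is immediate from the definition of $L^{p_i,\kappa}$; when $p_i>s'$ I would first apply H\"older with exponents $p_i/{s'}$ and $(p_i/{s'})'$ (as in Case 2 of Theorem \ref{maximalthm2}, using $\frac{1}{s'(p_i/{s'})'}=\frac{1}{s'}-\frac{1}{p_i}$) and then insert the Morrey bound $\big(\int_{B(x,r)}|f_i|^{p_i}\big)^{1/{p_i}}\leq\|f_i\|_{L^{p_i,\kappa}}\,m(B(x,r))^{\kappa/{p_i}}$. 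Both cases unify into
\[
\bigg(\int_{|x-y_i|<r}|f_i(y_i)|^{s'}\,dy_i\bigg)^{1/{s'}}\leq\|f_i\|_{L^{p_i,\kappa}}\,m(B(x,r))^{\kappa/{p_i}+1/{s'}-1/{p_i}},
\]
so that multiplying by $m(B(x,r))^{1/s}$ and using $1/s+1/{s'}=1$ produces the clean exponent $1-{(1-\kappa)}/{p_i}$ on $m(B(x,r))$.

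Finally I would take the product over $i=1,\dots,m$. After dividing by $m(B(x,r))^{1-\alpha/{mn}}$ in each factor, the accumulated power of $m(B(x,r))$ is
\[
\sum_{i=1}^m\Big[\frac{\alpha}{mn}-\frac{1-\kappa}{p_i}\Big]=\frac{\alpha}{n}-\frac{1-\kappa}{p},
\]
and here the critical hypothesis $\kappa=1-{(\alpha p)}/n$, that is ${(1-\kappa)}/p=\alpha/n$, forces this exponent to vanish. Hence the product is bounded by $C\|\vec{\Omega}\|_{L^s(\mathbf{S}^{n-1})}\prod_{i=1}^m\|f_i\|_{L^{p_i,\kappa}}$ uniformly in $x$ and $r$; taking the supremum over all $r>0$ and then the essential supremum over $x\in\mathbb R^n$ yields the claimed $L^{\infty}$ estimate.

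The only delicate point is the exponent bookkeeping together with the uniform treatment of the two ranges $p_i=s'$ and $p_i>s'$; both are routine once one recognizes that the critical balance ${(1-\kappa)}/p=\alpha/n$ is exactly what cancels the $m(B(x,r))$-dependence, so no genuinely new idea beyond the proof of Theorem \ref{maximalthm2} is required.
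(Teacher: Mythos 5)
Your proposal is correct and follows essentially the same argument as the paper's proof: H\"older's inequality with exponents $s,s'$ on each factor, the kernel bound \eqref{omega88}, the two cases $p_i=s'$ and $p_i>s'$ yielding the exponent $1-{(1-\kappa)}/{p_i}$ on $m(B(x,r))$, and the cancellation of all powers of $m(B(x,r))$ via ${(1-\kappa)}/p=\alpha/n$. The exponent bookkeeping in your unified formula matches the paper's case-by-case computation exactly, so nothing further is needed.
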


\begin{proof}[Proof of Theorem $\ref{maximalthm5}$]
For any given ball $B(x,r)\subset\mathbb R^n$ with center $x\in\mathbb R^n$ and radius $r\in(0,\infty)$, by using H\"{o}lder's inequality and \eqref{omega88}, we get
\begin{equation*}
\begin{split}
&\prod_{i=1}^m\frac{1}{m(B(x,r))^{1-\alpha/{mn}}}\int_{|x-y_i|<r}|\Omega_i(x-y_i)|\cdot|f_i(y_i)|\,dy_i\\
&\leq\prod_{i=1}^m\frac{1}{m(B(x,r))^{1-\alpha/{mn}}}\bigg(\int_{|x-y_i|<r}|\Omega_i(x-y_i)|^sdy_i\bigg)^{1/s}
\bigg(\int_{|x-y_i|<r}|f_i(y_i)|^{s'}dy_i\bigg)^{1/{s'}}\\
&\lesssim\big\|\vec{\Omega}\big\|_{L^s(\mathbf{S}^{n-1})}
\prod_{i=1}^m\frac{1}{m(B(x,r))^{1-\alpha/{mn}}}
\cdot m(B(x,r))^{1/s}\bigg(\int_{B(x,r)}|f_i(y_i)|^{s'}dy_i\bigg)^{1/{s'}}.
\end{split}
\end{equation*}
We now consider the following two cases for the index $p_i$.

\textbf{Case 1:} $p_i=s'$ for some $1\leq i\leq m$. In this case, one has
\begin{equation*}
m(B(x,r))^{1/s}\bigg(\int_{B(x,r)}|f_i(y_i)|^{s'}dy_i\bigg)^{1/{s'}}
\leq \big\|f_i\big\|_{L^{p_i,\kappa}}\Big[m(B(x,r))\Big]^{1-{(1-\kappa)}/{p_i}}.
\end{equation*}

\textbf{Case 2:} $p_i>s'$ for some $1\leq i\leq m$. A straightforward computation leads to that
\begin{equation*}
\begin{split}
\frac{1}{s'({p_i}/{s'})'}=\frac{1}{s'}\cdot\frac{p_i-s'}{p_i}=\frac{1}{s'}-\frac{1}{p_i}.
\end{split}
\end{equation*}
This, together with H\"{o}lder's inequality, implies that
\begin{equation*}
\begin{split}
\bigg(\int_{B(x,r)}|f_i(y_i)|^{s'}dy_i\bigg)^{1/{s'}}
&\leq\bigg(\int_{B(x,r)}|f_i(y_i)|^{p_i}\,dy_i\bigg)^{1/{p_i}}\Big[m(B(x,r))\Big]^{\frac{1}{s'({p_i}/{s'})'}}\\
&=\bigg(\int_{B(x,r)}|f_i(y_i)|^{p_i}\,dy_i\bigg)^{1/{p_i}}\Big[m(B(x,r))\Big]^{1/{s'}-1/{p_i}}\\
&\leq\big\|f_i\big\|_{L^{p_i,\kappa}}\Big[m(B(x,r))\Big]^{1/{s'}-{(1-\kappa)}/{p_i}},
\end{split}
\end{equation*}
and hence
\begin{equation*}
m(B(x,r))^{1/s}\bigg(\int_{B(x,r)}|f_i(y_i)|^{s'}dy_i\bigg)^{1/{s'}}
\leq \big\|f_i\big\|_{L^{p_i,\kappa}}\Big[m(B(x,r))\Big]^{1-{(1-\kappa)}/{p_i}}.
\end{equation*}
Summing up the above estimates, we conclude that for any $x\in\mathbb R^n$ and $r>0$,
\begin{equation*}
\begin{split}
&\prod_{i=1}^m\frac{1}{m(B(x,r))^{1-\alpha/{mn}}}\int_{|x-y_i|<r}|\Omega_i(x-y_i)|\cdot|f_i(y_i)|\,dy_i\\
&\lesssim\big\|\vec{\Omega}\big\|_{L^s(\mathbf{S}^{n-1})}
\prod_{i=1}^m\frac{1}{m(B(x,r))^{1-\alpha/{mn}}}\times m(B(x,r))^{1-{(1-\kappa)}/{p_i}}\big\|f_i\big\|_{L^{p_i,\kappa}}\\
&=\big\|\vec{\Omega}\big\|_{L^s(\mathbf{S}^{n-1})}\prod_{i=1}^m\big\|f_i\big\|_{L^{p_i,\kappa}}
\frac{1}{m(B(x,r))^{m-\alpha/n}}\cdot m(B(x,r))^{\sum_{i=1}^m[1-{(1-\kappa)}/{p_i}]}\\
&=\big\|\vec{\Omega}\big\|_{L^s(\mathbf{S}^{n-1})}\prod_{i=1}^m\big\|f_i\big\|_{L^{p_i,\kappa}},
\end{split}
\end{equation*}
where in the last equation we have used the fact that
\begin{equation*}
\sum_{i=1}^m {(1-\kappa)}/{p_i}={(1-\kappa)}/p=\alpha/n.
\end{equation*}
The desired result follows by taking the supremum for all $r>0$. We are done.
\end{proof}

As for the multilinear fractional integral operator $\mathcal{T}_{\Omega,\alpha;m}$, the following result can also be proved with an additional assumption.
\begin{thm}\label{maximalthm6}
Let $0<\alpha<mn$, $1\leq p_1,p_2,\dots,p_m<\infty,$ and
\begin{equation*}
1/p=1/{p_1}+1/{p_2}+\cdots+1/{p_m},\quad 1/q=1/p-{\alpha}/n.
\end{equation*}
Suppose that $\Omega_1=\Omega_2=\cdots=\Omega_m\equiv1$ and $\kappa=p/q=1-{(\alpha p)}/n$, then the following inequality
\begin{equation*}
\big\|\mathcal{T}_{\Omega,\alpha;m}(\vec{f})\big\|_{\mathrm{BMO}}\lesssim\prod_{i=1}^m\big\|f_i\big\|_{L^{p_i,\kappa}}
\end{equation*}
holds for all $\vec{f}=(f_1,f_2,\dots,f_m)\in L^{p_1,\kappa}(\mathbb R^n)\times L^{p_2,\kappa}(\mathbb R^n)
\times\cdots\times L^{p_m,\kappa}(\mathbb R^n)$.
\end{thm}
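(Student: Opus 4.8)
The plan is to reduce the claim to a uniform bound on the mean oscillation of $\mathcal{T}_{\Omega,\alpha;m}(\vec f)=\mathcal{I}_{\alpha;m}(\vec f)$ (recall $\Omega_i\equiv1$) over an arbitrary ball $B=B(x_0,r)$. Fixing such a ball and setting $B^{*}:=2\sqrt{m}\,B$, I would split each $f_i=f_i^{0}+f_i^{\infty}$ with $f_i^{0}:=f_i\chi_{B^{*}}$ and $f_i^{\infty}:=f_i\chi_{(B^{*})^{\complement}}$, and expand $\mathcal{I}_{\alpha;m}(\vec f)$ by multilinearity into $2^{m}$ terms $\mathcal{I}_{\alpha;m}(f_1^{a_1},\dots,f_m^{a_m})$ with $a_i\in\{0,\infty\}$, writing $\vec f^{\vec a}:=(f_1^{a_1},\dots,f_m^{a_m})$. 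As the constant against which to measure the oscillation I would take $c_B:=\sum_{\vec a\neq(0,\dots,0)}\mathcal{I}_{\alpha;m}(\vec f^{\vec a})(x_0)$, so that it remains to bound, uniformly in $B$, the quantity $\frac1{m(B)}\int_B|\mathcal{I}_{\alpha;m}(f_1^0,\dots,f_m^0)(x)|\,dx$ together with $\frac1{m(B)}\int_B|\mathcal{I}_{\alpha;m}(\vec f^{\vec a})(x)-\mathcal{I}_{\alpha;m}(\vec f^{\vec a})(x_0)|\,dx$ for each $\vec a\neq(0,\dots,0)$.

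For the purely local term, the functions $f_i^{0}$ belong to $L^{p_i}(\mathbb R^n)$ with $\|f_i^{0}\|_{L^{p_i}}=\|f_i\|_{L^{p_i}(B^{*})}\le\|f_i\|_{L^{p_i,\kappa}}\,m(B^{*})^{\kappa/p_i}$. I would invoke the Kenig--Stein bound (Theorem \ref{kenigstein}): when all $p_i>1$ it gives $\|\mathcal{I}_{\alpha;m}(f_1^0,\dots,f_m^0)\|_{L^q}\lesssim\prod_i\|f_i^0\|_{L^{p_i}}$, and when some $p_i=1$ the same holds with $L^{q,\infty}$ in place of $L^q$. In either case Hölder's (respectively Kolmogorov's) inequality yields $\frac1{m(B)}\int_B|\mathcal{I}_{\alpha;m}(f_1^0,\dots,f_m^0)|\lesssim m(B)^{-1/q}\prod_i\|f_i^0\|_{L^{p_i}}$. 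Collecting the powers of the measure and using $\prod_i m(B^{*})^{\kappa/p_i}=C\,m(B)^{\kappa/p}$ together with the critical identity $\kappa=p/q$ (so that $-1/q+\kappa/p=0$) makes all factors $m(B)$ disappear, leaving $\lesssim\prod_i\|f_i\|_{L^{p_i,\kappa}}$ independently of $B$.

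For the mixed and fully global terms the decisive tool is the smoothness of the kernel. Writing $F_{\vec y}(x):=\|(x-y_1,\dots,x-y_m)\|^{-(mn-\alpha)}$, a direct differentiation gives $|\nabla_x F_{\vec y}(x)|\lesssim\|(x-y_1,\dots,x-y_m)\|^{-(mn-\alpha+1)}$, whence by the mean value theorem $|F_{\vec y}(x)-F_{\vec y}(x_0)|\lesssim |x-x_0|\,\|(x_0-y_1,\dots,x_0-y_m)\|^{-(mn-\alpha+1)}$ whenever $x\in B$ and $\|(x_0-y_1,\dots,x_0-y_m)\|$ is large compared with $r$; the choice $B^{*}=2\sqrt m\,B$ guarantees that any term with $\vec a\neq(0,\dots,0)$ has at least one far argument, so that $\|(x_0-y_1,\dots,x_0-y_m)\|\ge 2\sqrt m\,r$ on its domain of integration. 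I would then decompose that domain into the dyadic shells $\{\,2^{k-1}\cdot2\sqrt m\,r\le\|(x_0-y_1,\dots,x_0-y_m)\|<2^{k}\cdot2\sqrt m\,r\,\}$, $k\ge1$, bound the kernel difference by $r\,(2^{k}r)^{-(mn-\alpha+1)}$ on each, and estimate $\prod_i|f_i|$ by Hölder, controlling the local factors over $B^{*}$ and the global factors over $B(x_0,C2^{k}r)$ through the Morrey norms. Keeping track of the exponents and again inserting $\kappa=1-\alpha p/n$, the $(2^{k}r)$-power on the $k$-th shell turns out to be $2^{k\beta}$ with $\beta=-1-n\sum_{i:\,a_i=0}(\kappa/p_i+1/p_i')\le-1$, while every power of $r$ cancels; summing the resulting geometric series gives a bound $\lesssim\prod_i\|f_i\|_{L^{p_i,\kappa}}$, uniform in $B$.

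The main obstacle I anticipate is the bookkeeping for the mixed terms: unlike the fully local or fully global cases, one must separate the local arguments (integrated over the fixed ball $B^{*}$) from the global ones (integrated over the growing balls $B(x_0,C2^{k}r)$) and verify that the exact critical relation $\kappa=1-\alpha p/n$ forces every surviving power of $r$ to cancel and every $2^{k}$-exponent to be strictly negative; this is precisely where the hypothesis on $\kappa$ is used. A secondary, more routine point is to justify the absolute convergence of the integrals defining $c_B$ and each term, which follows a posteriori from the same shell estimates. Combining the three estimates and taking the supremum over all balls $B$ then yields $\|\mathcal{T}_{\Omega,\alpha;m}(\vec f)\|_{\mathrm{BMO}}\lesssim\prod_{i=1}^m\|f_i\|_{L^{p_i,\kappa}}$, as desired.
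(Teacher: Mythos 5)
You should know at the outset that the paper contains no proof of Theorem \ref{maximalthm6}: it attributes the result to Tang \cite{tang} and merely asserts that Tang's restriction $(m-1)n<\alpha<mn$ can be removed ``with only minor modifications.'' So your proposal has to stand on its own, and its skeleton is the right one --- the Hedberg--Adams pointwise method used elsewhere in the paper cannot work at the critical index (the far-part geometric series has exponent zero), so an oscillation estimate with a local part handled by Theorem \ref{kenigstein} and a far part handled by kernel smoothness is the standard route. Your analysis of the mixed and fully global terms is correct: the gradient bound, the choice $B^{*}=2\sqrt{m}\,B$, the dyadic shells, and the bookkeeping (shell exponent $\beta=-1-n\sum_{i:\,a_i=0}(\kappa/p_i+1/p_i')\leq -1$, all powers of $r$ cancelling under $\kappa=1-\alpha p/n$) all check out. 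The genuine gap is the local term, and it sits precisely in the range that this theorem adds to Tang's. Your passage from the Kenig--Stein bound to $\frac{1}{m(B)}\int_B|\mathcal{I}_{\alpha;m}(f_1^0,\dots,f_m^0)|\lesssim m(B)^{-1/q}\prod_i\|f_i^0\|_{L^{p_i}}$ uses H\"older, which needs $q\geq 1$, or Kolmogorov at exponent $1$, which needs $q>1$. But the hypotheses allow $q\leq 1$: since $1/p\leq m$, the inequality $1/q=1/p-\alpha/n\geq 1$ is achievable exactly when $\alpha\leq (m-1)n$; for instance $m=2$, $p_1=p_2=1$, $\alpha=n/2$ gives $q=2/3$, $\kappa=3/4$, and $m=3$, $p_1=p_2=p_3=2$, $\alpha=n/4$ gives $q=4/5$ with all $p_i>1$. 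When $q<1$, an $L^q$ or $L^{q,\infty}$ bound gives no control whatsoever of the $L^1(B)$ average ($L^q(B)\not\subset L^1(B)$), so as written your argument proves the theorem only under Tang's original hypothesis $\alpha>(m-1)n$, which is equivalent to having $q>1$ for all admissible $p_i$. The standard repair --- presumably the ``minor modification'' the paper alludes to --- is to estimate the $L^s$-oscillation $\inf_{c}\bigl(\frac{1}{m(B)}\int_B|\mathcal{T}_{\Omega,\alpha;m}(\vec f)-c|^s\,dx\bigr)^{1/s}$ for a fixed $0<s<\min\{q,1\}$, which Kolmogorov at exponent $s<q$ does provide for the local term (your pointwise far-term estimates are unaffected), and then invoke the John--Str\"omberg-type fact that a uniform bound on the $L^s$-oscillation over all balls, $0<s<1$, already implies the $\mathrm{BMO}$ bound with comparable norm.

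A second flaw concerns your constant $c_B=\sum_{\vec a\neq(0,\dots,0)}\mathcal{I}_{\alpha;m}(\vec f^{\vec a})(x_0)$: it need not be finite, and its finiteness does not ``follow a posteriori from the same shell estimates.'' For the fully global term $\vec a=(\infty,\dots,\infty)$ your own computation gives shell exponent $-(mn-\alpha)+n\sum_{i=1}^m(\kappa/p_i+1/p_i')=0$, so the shell sum diverges, and the divergence is real: already for $m=1$ the function $f(y)=|y|^{-\alpha}\chi_{\{|y|>1\}}$ belongs to $L^{p,\kappa}(\mathbb R^n)$ with $\kappa=1-\alpha p/n$, yet $\int f(y)|x-y|^{\alpha-n}\,dy=+\infty$ for every $x$. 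What the shells control is only the difference $\mathcal{I}_{\alpha;m}(\vec f^{\vec a})(x)-\mathcal{I}_{\alpha;m}(\vec f^{\vec a})(x_0)$. To make the argument rigorous you must either interpret $\mathcal{T}_{\Omega,\alpha;m}(\vec f)$ through a renormalized (difference) kernel, or assume it is finite a.e.\ and bound the double average $\frac{1}{m(B)^2}\int_B\int_B|\mathcal{T}_{\Omega,\alpha;m}(\vec f)(x)-\mathcal{T}_{\Omega,\alpha;m}(\vec f)(x')|\,dx\,dx'$, where only differences of the global terms ever appear. This well-definedness issue is admittedly glossed over by the theorem statement itself, but your write-up asserts absolute convergence at a point where it can genuinely fail.
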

This result has been obtained by Tang in \cite{tang}. Actually, we can remove the restriction of $(m-1)n<\alpha<mn$ in \cite{tang}, and prove that the same conclusion also holds for $0<\alpha<mn$ with only minor modifications.

We now claim that the inclusion relation
\begin{equation*}
L^{p,\infty}(\mathbb R^n)\subset L^{q,\kappa}(\mathbb R^n)
\end{equation*}
holds with $1\leq q<p$ and $\kappa=1-q/p$. Moreover, for any $f\in L^{p,\infty}(\mathbb R^n)$ with $1<p<\infty$,  there exists a constant $C=C(p,q)>0$ such that
\begin{equation}\label{cpq}
\big\|f\big\|_{L^{q,\kappa}}\leq C\big\|f\big\|_{L^{p,\infty}}
\end{equation}
holds with $1\leq q<p$ and $\kappa=1-q/p$. In fact, for any ball $\mathcal{B}$ in $\mathbb R^n$, we have
\begin{equation*}
\begin{split}
\big\|f\cdot\chi_{\mathcal{B}}\big\|_{L^q}&=\bigg(\int_{\mathcal{B}}|f(x)|^q\,dx\bigg)^{1/q}\\
&=\bigg(\int_0^{\infty}q\lambda^{q-1}m\big(\big\{x\in \mathcal{B}:|f(x)|>\lambda\big\}\big)\,d\lambda\bigg)^{1/q}\\
&=\bigg(\int_{0}^{\sigma}\cdots+\int_{\sigma}^{\infty}\cdots\bigg)^{1/q},
\end{split}
\end{equation*}
where $\sigma>0$ is a constant to be chosen later. Clearly, the first integral in the bracket is bounded by
\begin{equation*}
\begin{split}
\int_0^{\sigma}q\lambda^{q-1}m(\mathcal{B})\,d\lambda=\sigma^q\cdot m(\mathcal{B}).
\end{split}
\end{equation*}
Using the fact that $q-p<0$, the second integral in the bracket is then controlled by
\begin{equation*}
\begin{split}
&\int_{\sigma}^{\infty}q\lambda^{q-1}m\big(\big\{x\in\mathbb R^n:|f(x)|>\lambda\big\}\big)\,d\lambda\\
&\leq\int_{\sigma}^{\infty}q\lambda^{q-1}\Big(\frac{\|f\|_{L^{p,\infty}}}{\lambda}\Big)^pd\lambda\\
&=\frac{q}{p-q}\Big(\sigma^{q-p}\cdot\big\|f\big\|^p_{L^{p,\infty}}\Big).
\end{split}
\end{equation*}
We now choose $\sigma>0$ such that
\begin{equation*}
\sigma^q\cdot m(\mathcal{B})=\sigma^{q-p}\cdot\big\|f\big\|^p_{L^{p,\infty}}.
\end{equation*}
That is,
\begin{equation*}
\sigma:=\frac{\|f\|_{L^{p,\infty}}}{m(\mathcal{B})^{1/p}}.
\end{equation*}
Hence,
\begin{equation*}
\begin{split}
\big\|f\cdot\chi_{\mathcal{B}}\big\|_{L^q}
&\leq\Big[1+\frac{q}{p-q}\Big]^{1/q}\Big[\sigma^q\cdot m(\mathcal{B})\Big]^{1/q}\\
&=\Big[\frac{p}{p-q}\Big]^{1/q}m(\mathcal{B})^{1/q-1/p}\big\|f\big\|_{L^{p,\infty}},
\end{split}
\end{equation*}
which is equivalent to that
\begin{equation*}
\frac{1}{m(\mathcal{B})^{1/q-1/p}}\big\|f\cdot\chi_{\mathcal{B}}\big\|_{L^q}
\leq \Big[\frac{p}{p-q}\Big]^{1/q}\big\|f\big\|_{L^{p,\infty}}.
\end{equation*}
This gives the desired estimate \eqref{cpq} and completes the proof of the claim, by taking the supremum over all balls $\mathcal{B}$ in $\mathbb R^n$.

As an immediate consequence of \eqref{cpq}, we obtain that for any given $\vec{f}=(f_1,f_2,\dots,f_m)\in L^{p^{*}_1,\infty}(\mathbb R^n)\times L^{p^{*}_2,\infty}(\mathbb R^n)\times\cdots\times L^{p^{*}_m,\infty}(\mathbb R^n)$ with
\begin{equation*}
s'<p^{*}_1,p^{*}_2,\dots,p^{*}_m<\infty\quad \mbox{and} \quad 1<s\leq\infty,
\end{equation*}
the inequalities
\begin{equation}\label{cpq2}
\big\|f_i\big\|_{L^{p_i,\kappa}}\leq C\big\|f_i\big\|_{L^{p^{*}_i,\infty}},\quad i=1,2,\dots,m
\end{equation}
hold, whenever $s'\leq p_i<p^{*}_i$ and $\kappa=1-{p_i}/{p^{*}_i}$. Suppose that $0<\alpha<mn$, $0<p<n/{\alpha}$,
\begin{equation*}
\frac{\alpha}{\,n\,}=\frac{1}{p^{*}_1}+\frac{1}{p^{*}_2}+\cdots+\frac{1}{p^{*}_m}\quad \&
\quad \frac{1}{\,p\,}=\frac{1}{p_1}+\frac{1}{p_2}+\cdots+\frac{1}{p_m}.
\end{equation*}
By a simple calculation, one has
\begin{equation}\label{cpq3}
\begin{split}
&\frac{\kappa}{\,p\,}=\kappa\cdot\sum_{i=1}^m\frac{1}{p_i}
=\sum_{i=1}^m\Big(\frac{1}{p_i}-\frac{1}{p^{*}_i}\Big)=\frac{1}{\,p\,}-\frac{\alpha}{\,n\,}.\\
\Longrightarrow &\kappa=1-\frac{\alpha p}{n}.
\end{split}
\end{equation}

Therefore, in view of \eqref{cpq2} and \eqref{cpq3}, we have the following results.

\begin{cor}\label{cor16}
Let $\vec{\Omega}=(\Omega_1,\Omega_2,\dots,\Omega_m)\in L^s(\mathbf{S}^{n-1})$ with $1<s\leq\infty$. Suppose that
\begin{equation*}
0<\alpha<mn,\qquad  s'<p_1,p_2,\dots,p_m<\infty,
\end{equation*}
and
\begin{equation*}
{\alpha}/n=1/{p_1}+1/{p_2}+\cdots+1/{p_m}.
\end{equation*}
Then the following inequality
\begin{equation*}
\big\|\mathcal{M}_{\Omega,\alpha;m}(\vec{f})\big\|_{L^{\infty}}\lesssim\prod_{i=1}^m\big\|f_i\big\|_{L^{p_i,\infty}}
\end{equation*}
holds for all $\vec{f}=(f_1,f_2,\dots,f_m)\in L^{p_1,\infty}(\mathbb R^n)\times L^{p_2,\infty}(\mathbb R^n)
\times\cdots\times L^{p_m,\infty}(\mathbb R^n)$.
\end{cor}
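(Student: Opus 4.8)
The plan is to derive Corollary \ref{cor16} directly from the Morrey-space endpoint estimate of Theorem \ref{maximalthm5} by passing through the embedding \eqref{cpq2}, the point being to introduce an auxiliary smoothness parameter $\kappa$ and intermediate Morrey exponents that land us exactly on the critical index $\kappa=1-\alpha\tilde p/n$ demanded by that theorem. I would keep the weak exponents $p_1,\dots,p_m$ of the statement (these play the role of the $p_i^{*}$ in \eqref{cpq2}) and manufacture the Morrey exponents as suitable dilations of them.

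First I would fix the parameter. Since $p_i>s'$ for every $i$, the number $1-s'/\min_{1\le i\le m}p_i$ is strictly positive, so I may choose $\kappa$ with $0<\kappa\le 1-s'/\min_{1\le i\le m}p_i$ and set $\tilde p_i:=(1-\kappa)p_i$ for $i=1,2,\dots,m$. This single choice guarantees $s'\le \tilde p_i<p_i<\infty$ together with $\kappa=1-\tilde p_i/p_i$ for \emph{all} $i$ at once, which is precisely the configuration in \eqref{cpq2}. Hence, for each $i$,
\begin{equation*}
\big\|f_i\big\|_{L^{\tilde p_i,\kappa}}\le C\big\|f_i\big\|_{L^{p_i,\infty}}.
\end{equation*}

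Next I would carry out the exponent bookkeeping, which is the same computation already recorded in \eqref{cpq3}. Putting $1/\tilde p:=\sum_{i=1}^m 1/\tilde p_i$ and $1/\tilde q:=1/\tilde p-\alpha/n$, the relations $\tilde p_i=(1-\kappa)p_i$ and $\alpha/n=\sum_{i=1}^m 1/p_i$ give $1/\tilde p=\frac{1}{1-\kappa}\cdot\frac{\alpha}{n}$, so that $\alpha\tilde p/n=1-\kappa\in(0,1)$; consequently $\tilde p<n/\alpha$ and $\kappa=\tilde p/\tilde q=1-\alpha\tilde p/n$. Thus $(\tilde p_i,\tilde p,\kappa)$ meets every hypothesis of Theorem \ref{maximalthm5}, which then yields
\begin{equation*}
\big\|\mathcal{M}_{\Omega,\alpha;m}(\vec f)\big\|_{L^{\infty}}\lesssim\prod_{i=1}^m\big\|f_i\big\|_{L^{\tilde p_i,\kappa}}.
\end{equation*}
Combining this with the previous display over all $i$ produces $\big\|\mathcal{M}_{\Omega,\alpha;m}(\vec f)\big\|_{L^{\infty}}\lesssim\prod_{i=1}^m\big\|f_i\big\|_{L^{p_i,\infty}}$, as claimed.

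I do not expect a genuine obstacle here; the only step that needs attention is the \emph{simultaneous} choice of one parameter $\kappa$ valid for all indices. It must be taken small enough to keep every intermediate exponent $\tilde p_i=(1-\kappa)p_i$ at or above $s'$, and the strict inequalities $p_i>s'$ are exactly what make room for this. Once such a $\kappa$ is fixed, the critical identity $\kappa=1-\alpha\tilde p/n$ holds automatically, so no further compatibility condition beyond $p_i>s'$ and $\alpha/n=\sum_{i=1}^m 1/p_i$ is required.
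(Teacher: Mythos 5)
Your proposal is correct and is essentially the paper's own argument: the paper derives Corollary \ref{cor16} precisely by combining the weak-Lebesgue-to-Morrey embedding \eqref{cpq2} with the exponent identity \eqref{cpq3} and then invoking the critical-case Theorem \ref{maximalthm5}. Your explicit choice $\tilde p_i=(1-\kappa)p_i$ with $0<\kappa\le 1-s'/\min_i p_i$ simply makes concrete the simultaneous choice of $\kappa$ that the paper leaves implicit, and your bookkeeping matches \eqref{cpq3} exactly.
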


\begin{cor}\label{cor18}
Let $0<\alpha<mn$, $1<p_1,p_2,\dots,p_m<\infty,$ and
\begin{equation*}
{\alpha}/n=1/{p_1}+1/{p_2}+\cdots+1/{p_m}.
\end{equation*}
Suppose that $\Omega_1=\Omega_2=\cdots=\Omega_m\equiv1$. Then the following inequality
\begin{equation*}
\big\|\mathcal{T}_{\Omega,\alpha;m}(\vec{f})\big\|_{\mathrm{BMO}}\lesssim\prod_{i=1}^m\big\|f_i\big\|_{L^{p_i,\infty}}
\end{equation*}
holds for all $\vec{f}=(f_1,f_2,\dots,f_m)\in L^{p_1,\infty}(\mathbb R^n)\times L^{p_2,\infty}(\mathbb R^n)
\times\cdots\times L^{p_m,\infty}(\mathbb R^n)$.
\end{cor}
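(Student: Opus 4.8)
The plan is to deduce Corollary~\ref{cor18} from the Morrey-space endpoint estimate Theorem~\ref{maximalthm6} combined with the weak-Lebesgue--to--Morrey embedding \eqref{cpq}, exactly as prepared by the exponent computation \eqref{cpq2}--\eqref{cpq3}. The key observation is that the hypothesis $\alpha/n=\sum_{i=1}^m 1/p_i$ with each $p_i>1=s'$ (here $s=\infty$ since $\Omega_i\equiv1$) places the \emph{weak} data $f_i\in L^{p_i,\infty}$ in the critical regime; the embedding \eqref{cpq} then converts each weak datum into a Morrey datum of a strictly smaller Lebesgue exponent, for which Theorem~\ref{maximalthm6} already furnishes a $\mathrm{BMO}$ bound.

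First I would fix the auxiliary Morrey indices. Since each $p_i>1$, I can choose $\kappa\in(0,1)$ small enough that $\tilde p_i:=(1-\kappa)p_i>1$ for every $i$; concretely, any $0<\kappa<1-\max_{1\le i\le m}(1/p_i)$ works. Defining $\tilde p$ by $1/\tilde p:=\sum_{i=1}^m 1/\tilde p_i$, I compute
\[
\frac{1}{\tilde p}=\frac{1}{1-\kappa}\sum_{i=1}^m\frac{1}{p_i}=\frac{\alpha}{n(1-\kappa)},
\]
so that $\tilde p<n/\alpha$ (because $\kappa>0$) and, rearranging, $\kappa=1-\alpha\tilde p/n$. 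Hence the tuple $(\tilde p_1,\dots,\tilde p_m)$ together with $\kappa$ satisfies exactly the hypotheses of Theorem~\ref{maximalthm6} (with $1/\tilde q=1/\tilde p-\alpha/n$ and $\kappa=\tilde p/\tilde q=1-\alpha\tilde p/n$), while $\Omega_1=\cdots=\Omega_m\equiv1$ is assumed.

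Next I would chain the two estimates. Applying Theorem~\ref{maximalthm6} to the Morrey data yields
\[
\big\|\mathcal{T}_{\Omega,\alpha;m}(\vec f)\big\|_{\mathrm{BMO}}\lesssim\prod_{i=1}^m\big\|f_i\big\|_{L^{\tilde p_i,\kappa}},
\]
and since $\kappa=1-\tilde p_i/p_i$ for each $i$ by the choice $\tilde p_i=(1-\kappa)p_i$, the embedding \eqref{cpq2} (that is, \eqref{cpq}) gives $\|f_i\|_{L^{\tilde p_i,\kappa}}\le C\|f_i\|_{L^{p_i,\infty}}$. Multiplying these bounds over $i$ produces $\|\mathcal{T}_{\Omega,\alpha;m}(\vec f)\|_{\mathrm{BMO}}\lesssim\prod_{i=1}^m\|f_i\|_{L^{p_i,\infty}}$, which is the assertion.

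The only point requiring care, and thus the ``main obstacle,'' is the bookkeeping of exponents: one must pick a \emph{single} parameter $\kappa$ that serves all $m$ embeddings at once, which forces the proportionality $\tilde p_i=(1-\kappa)p_i$, and one must then verify that this same $\kappa$ is precisely the critical Morrey parameter $1-\alpha\tilde p/n$ demanded by Theorem~\ref{maximalthm6}. Both are guaranteed by the algebraic identity \eqref{cpq3}, so beyond invoking the two already-established ingredients there is no genuine analytic difficulty.
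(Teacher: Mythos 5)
Your proof is correct and takes essentially the same route as the paper: the paper likewise deduces Corollary~\ref{cor18} by combining the embedding \eqref{cpq2} with the exponent identity \eqref{cpq3} and then invoking Theorem~\ref{maximalthm6}, and your auxiliary exponents $\tilde p_i=(1-\kappa)p_i$ are exactly the paper's Morrey exponents $p_i=(1-\kappa)p_i^{*}$. Your explicit choice $0<\kappa<1-\max_{1\le i\le m}(1/p_i)$ merely makes concrete the admissible parameter range that the paper leaves implicit.
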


Finally, we consider the (\textbf{non-centered}) multilinear fractional maximal operator $\mathcal{M}'_{\Omega,\alpha;m}$, which is given by
\begin{equation*}
\mathcal{M}'_{\Omega,\alpha;m}(\vec{f})(x):=\sup_{B\ni x}\prod_{i=1}^m
\frac{1}{m(B)^{1-\alpha/{mn}}}\int_{B}\big|\Omega_i(x-y_i)f_i(y_i)\big|\,dy_i,
\end{equation*}
where the supremum is taken over all balls $B$ containing the point $x$ in $\mathbb R^n$. By using similar arguments, we can see that all the results derived above are also true for the multilinear operator $\mathcal{M}'_{\Omega,\alpha;m}$, when $\vec{\Omega}=(\Omega_1,\Omega_2,\dots,\Omega_m)\in L^s(\mathbf{S}^{n-1})$ with $1<s\leq\infty$ and $0<\alpha<mn$ with $m\geq2$. The details are omitted here.

\section{Applications to several integral inequalities on $\mathbb R^n$}
\subsection{Hardy--Littlewood--Sobolev inequalities}
The classical Hardy--Littlewood--Sobolev inequality on $\mathbb R^n$ says that for any $0<\lambda<n$ and all measurable functions
$(f,g)\in L^p(\mathbb R^n)\times L^q(\mathbb R^n)$, we have
\begin{equation}\label{HLS}
\bigg|\int_{\mathbb R^n}\int_{\mathbb R^n}\frac{f(x)\cdot g(y)}{|x-y|^{\lambda}}\,dxdy\bigg|\leq C({p,q,\lambda,n})\|f\|_{L^p}\cdot\|g\|_{L^q},
\end{equation}
whenever $1<p,q<\infty$ and $1/p+1/q+\lambda/n=2$. This inequality appears in many areas of analysis(as a special case of Young's inequality), often in their dual forms as Sobolev inequalities. The sharp version of inequality \eqref{HLS} with optimal constant was proved by Lieb in \cite{lieb}. It was shown in \cite{lieb} that if $p=q={2n}/{(2n-\lambda)}$ and $0<\lambda<n$, then there exists a sharp constant $C(n,\lambda)$, independent of $f$ and $g$, such that (see also \cite{li})
\begin{equation}\label{HLSsharp}
\bigg|\int_{\mathbb R^n}\int_{\mathbb R^n}\frac{f(x)\cdot g(y)}{|x-y|^{\lambda}}\,dxdy\bigg|\leq C({n,\lambda})\big\|f\big\|_{L^{\frac{2n}{2n-\lambda}}}\cdot\big\|g\big\|_{L^{\frac{2n}{2n-\lambda}}},
\end{equation}
where the sharp constant $C({n,\lambda})$ is given by
\begin{equation*}
C({n,\lambda}):=\pi^{\lambda/2}\cdot\frac{\Gamma(n/2-\lambda/2)}{\Gamma(n-\lambda/2)}
\left(\frac{\Gamma(n/2)}{\Gamma(n)}\right)^{-1+\lambda/n}.
\end{equation*}
In this case, the equality in \eqref{HLSsharp} holds if and only if $f$ and $g$ can be written as $\mathcal{A}(\gamma^2+|x-x_0|^2)^{{(\lambda-2n)}/2}$ for some $\mathcal{A}\in \mathbb{C}$, $0\neq\gamma\in\mathbb R$ and $x_0\in\mathbb R^n$.
For more information about the sharp constants and the optimizers in \eqref{HLS}, as well as some generalizations, the reader is referred to \cite{li} and \cite{lieb2}.

By the results shown in Section \ref{sec2} (in the linear case), we can extend \eqref{HLS} to the following:
\begin{thm}\label{31}
Suppose that $\Omega\in L^s(\mathbf{S}^{n-1})$ with $1<s\leq\infty$. Let $0<\lambda<n$, $s'<p,q<\infty$ and $1/p+1/q+\lambda/n=2$. Then for any $f\in L^p(\mathbb R^n)$ and $g\in L^q(\mathbb R^n)$, there exists a constant $C>0$ independent of $f$ and $g$ such that
\begin{equation}\label{HLSWang}
\bigg|\int_{\mathbb R^n}\int_{\mathbb R^n}\frac{\Omega(x-y)f(x)\cdot g(y)}{|x-y|^{\lambda}}\,dxdy\bigg|
\leq C\|f\|_{L^p}\cdot\|g\|_{L^q}.
\end{equation}
\end{thm}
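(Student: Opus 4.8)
The plan is to recognize the double integral in \eqref{HLSWang} as a dual pairing against the linear homogeneous fractional integral $T_{\Omega,\alpha}=\mathcal{T}_{\Omega,\alpha;1}$, and then to invoke the Hardy--Littlewood--Sobolev-type boundedness already established in the linear ($m=1$) case of Theorem \ref{thm1}. First I would set $\alpha:=n-\lambda$, so that $0<\alpha<n$ because $0<\lambda<n$, and observe that
\[T_{\Omega,\alpha}g(x)=\int_{\mathbb R^n}\frac{\Omega(x-y)}{|x-y|^{n-\alpha}}g(y)\,dy=\int_{\mathbb R^n}\frac{\Omega(x-y)}{|x-y|^{\lambda}}g(y)\,dy.\]
Hence, after an application of Fubini's theorem, the left-hand side of \eqref{HLSWang} is nothing but the pairing $\int_{\mathbb R^n}f(x)\,T_{\Omega,\alpha}g(x)\,dx$. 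To justify the interchange of the order of integration rigorously I would first run the entire argument with $|\Omega|$, $|f|$, $|g|$ in place of $\Omega,f,g$; the resulting finite bound legitimizes Fubini, and the general (possibly complex-valued) case then follows by the triangle inequality.

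Next I would apply H\"older's inequality to the pairing, which gives
\[\left|\int_{\mathbb R^n}f(x)\,T_{\Omega,\alpha}g(x)\,dx\right|\le \|f\|_{L^p}\,\big\|T_{\Omega,\alpha}g\big\|_{L^{p'}}.\]
The remaining task is to bound $\|T_{\Omega,\alpha}g\|_{L^{p'}}$ by $\|g\|_{L^q}$, which is precisely part $(i)$ of Theorem \ref{thm1} with $m=1$ applied to the single function $g$, provided the exponents are compatible. That boundedness produces the mapping $L^q\to L^{\tilde q}$ with $1/\tilde q=1/q-\alpha/n$, so I must verify that $\tilde q=p'$. Substituting $\alpha=n-\lambda$ yields $1/\tilde q=1/q-1+\lambda/n$, and the requirement $1/\tilde q=1/p'=1-1/p$ rearranges exactly to the hypothesis $1/p+1/q+\lambda/n=2$; thus the scaling relation is automatic, with no freedom to adjust.

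The only genuinely delicate point is checking that the admissibility range of Theorem \ref{thm1} is met, namely $s'<q<n/\alpha$. The lower bound $s'<q$ is assumed. For the upper bound $q<n/\alpha=n/(n-\lambda)$, equivalently $1/q>1-\lambda/n$, I would insert the constraint $1/q=2-1/p-\lambda/n$, reducing the inequality to $1/p<1$, i.e. $p>1$; this holds because $p>s'\ge1$ (recall $1<s\le\infty$ forces $1\le s'<\infty$), so in fact $p>1$ strictly. With $s'<q<n/\alpha$ confirmed, Theorem \ref{thm1}$(i)$ furnishes $\|T_{\Omega,\alpha}g\|_{L^{p'}}\lesssim\|g\|_{L^q}$, and combining this with the H\"older estimate above completes the proof. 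I expect no substantial obstacle beyond this bookkeeping of exponents and the routine Fubini justification, since all the analytic content is absorbed into the already-proved boundedness of $\mathcal{T}_{\Omega,\alpha;1}$.
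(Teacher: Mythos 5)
Your proposal is correct and follows essentially the same route as the paper: write the double integral as the pairing $\int f\cdot T_{\Omega,n-\lambda}(g)$, apply H\"older's inequality, and invoke the linear case ($m=1$) of Theorem \ref{thm1}$(i)$, with the exponent identity $1/p'=1/q-(n-\lambda)/n$ coming from $1/p+1/q+\lambda/n=2$. If anything, your write-up is slightly tighter: you explicitly verify the admissibility condition $q<n/(n-\lambda)$ (via $p>s'\geq 1$) and justify Fubini, which the paper asserts implicitly, while the paper additionally performs a redundant dual estimate with $\widetilde{\Omega}(x)=\Omega(-x)$ that your argument correctly shows is unnecessary.
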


\begin{proof}
Using H\"{o}lder's inequality, we have
\begin{equation*}
\begin{split}
&\bigg|\int_{\mathbb R^n}\int_{\mathbb R^n}\frac{\Omega(x-y)f(x)\cdot g(y)}{|x-y|^{\lambda}}\,dxdy\bigg|\\
&=\bigg|\int_{\mathbb R^n}f(x)\bigg(\int_{\mathbb R^n}\frac{\Omega(x-y)g(y)}{|x-y|^{\lambda}}\,dy\bigg)dx\bigg|\\
&=\bigg|\int_{\mathbb R^n}f(x)\cdot T_{\Omega,n-\lambda}(g)(x)\,dx\bigg|\\
&\leq\big\|f\big\|_{L^p}\cdot\big\|T_{\Omega,n-\lambda}(g)\big\|_{L^{p'}}.
\end{split}
\end{equation*}
Observe that $1/{p'}=1/q-{(n-\lambda)}/n$ and $s'<q<n/{(n-\lambda)}$. Applying part $(i)$ of Theorem \ref{thm1}(for the linear case), we get
\begin{equation*}
\big\|T_{\Omega,n-\lambda}(g)\big\|_{L^{p'}}\leq C\big\|g\big\|_{L^q}.
\end{equation*}
Changing the order of integration yields
\begin{equation*}
\int_{\mathbb R^n}f(x)\cdot T_{\Omega,n-\lambda}(g)(x)\,dx=\int_{\mathbb R^n}T_{\widetilde{\Omega},n-\lambda}(f)(y)\cdot g(y)\,dy,
\end{equation*}
where $\widetilde{\Omega}(x):=\Omega(-x)$. Obviously, under the same assumptions as in Theorem \ref{thm1}, the conclusion of Theorem \ref{thm1} (for the linear case) also holds for $\widetilde{\Omega}(x)$. This fact along with H\"{o}lder's inequality yields
\begin{equation*}
\begin{split}
\bigg|\int_{\mathbb R^n}f(x)\cdot T_{\Omega,n-\lambda}(g)(x)\,dx\bigg|
&=\bigg|\int_{\mathbb R^n}T_{\widetilde{\Omega},n-\lambda}(f)(y)\cdot g(y)\,dy\bigg|\\
&\leq \big\|T_{\widetilde{\Omega},n-\lambda}(f)\big\|_{L^{q'}}\big\|g\big\|_{L^q}\\
&\leq C\big\|f\big\|_{L^p}\cdot\big\|g\big\|_{L^q},
\end{split}
\end{equation*}
where in the last inequality we have used the fact that $1/{q'}=1/p-{(n-\lambda)}/n$ and $s'<p<n/{(n-\lambda)}$. From this, the desired inequality follows.
\end{proof}

Observe that \eqref{HLSWang} can be rewritten as
\begin{equation*}
\big\|f\cdot T_{\Omega,n-\lambda}(g)\big\|_{L^1}\leq C\big\|f\big\|_{L^p}\cdot\big\|g\big\|_{L^q},
\end{equation*}
for any $f\in L^p(\mathbb R^n)$ and $g\in L^q(\mathbb R^n)$ with $s'<p,q<\infty$, when $0<\lambda<n$ and $1/p+1/q+\lambda/n=2$. Equivalently, when $0<\alpha<n$ and $1/p+1/q=1+\alpha/n$, we know that
\begin{equation}\label{L1norm}
\big\|f\cdot T_{\Omega,\alpha}(g)\big\|_{L^1}\leq C\big\|f\big\|_{L^p}\cdot\big\|g\big\|_{L^q},
\end{equation}
for any $f\in L^p(\mathbb R^n)$ and $g\in L^q(\mathbb R^n)$ with $s'<p,q<\infty$.

Motivated by the estimate \eqref{L1norm}, let us now consider the multilinear version of the HLS inequality. Recall that the following two estimates hold (see \cite[p.11 and p.16]{grafakos}).

\begin{lem}\label{weakHold}
Let $1\leq p,q<\infty$ and $0<r<\infty$ such that $1/r=1/p+1/q$. Then we have
\begin{enumerate}
  \item H\"older's inequality for $L^p$ spaces:
  \begin{equation*}
  \|\mathcal{F}\cdot \mathcal{G}\|_{L^r}\leq \|\mathcal{F}\|_{L^p}\cdot\|\mathcal{G}\|_{L^q}.
  \end{equation*}
  \item H\"older's inequality for weak $L^p$ spaces:
  \begin{equation*}
  \|\mathcal{F}\cdot \mathcal{G}\|_{L^{r,\infty}}\leq C\|\mathcal{F}\|_{L^p}\cdot\|\mathcal{G}\|_{L^{q,\infty}},
  \end{equation*}
  where $C$ is a positive constant independent of $\mathcal{F}$ and $\mathcal{G}$.
\end{enumerate}
\end{lem}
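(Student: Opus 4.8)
The plan is to establish the two inequalities independently, each by reducing to a classical fact about the distribution function. For part (1), I would raise the left-hand side to the power $r$ and invoke the ordinary Hölder inequality. Since $1/r=1/p+1/q$ forces $r<p$ and $r<q$, the exponents $a:=p/r$ and $b:=q/r$ satisfy $a,b>1$ and $1/a+1/b=1$, so they form a conjugate pair. Applying Hölder with $(a,b)$ to the functions $|\mathcal F|^r$ and $|\mathcal G|^r$ gives
\[
\int_{\mathbb R^n}|\mathcal F(x)\mathcal G(x)|^r\,dx\le\Big(\int_{\mathbb R^n}|\mathcal F(x)|^p\,dx\Big)^{r/p}\Big(\int_{\mathbb R^n}|\mathcal G(x)|^q\,dx\Big)^{r/q}=\|\mathcal F\|_{L^p}^r\,\|\mathcal G\|_{L^q}^r,
\]
and taking the $r$-th root yields (1). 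This step is entirely routine.

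For part (2), the real work lies in estimating the distribution function $d_{\mathcal F\mathcal G}(\lambda):=m(\{x:|\mathcal F(x)\mathcal G(x)|>\lambda\})$. The starting point is the elementary set inclusion, valid for every auxiliary parameter $\mu>0$,
\[
\{x:|\mathcal F(x)\mathcal G(x)|>\lambda\}\subseteq\{x:|\mathcal F(x)|>\mu\}\cup\{x:|\mathcal G(x)|>\lambda/\mu\},
\]
which holds because $|\mathcal F|\le\mu$ and $|\mathcal G|\le\lambda/\mu$ together force $|\mathcal F\mathcal G|\le\lambda$. Taking Lebesgue measures, then bounding the first term by Chebyshev's inequality (equivalently, the embedding $L^p\subset L^{p,\infty}$, so that $d_{\mathcal F}(\mu)\le(\|\mathcal F\|_{L^p}/\mu)^p$) and the second by the definition of the weak norm ($d_{\mathcal G}(\lambda/\mu)\le(\mu\|\mathcal G\|_{L^{q,\infty}}/\lambda)^q$), I would arrive at
\[
d_{\mathcal F\mathcal G}(\lambda)\le\frac{\|\mathcal F\|_{L^p}^p}{\mu^p}+\frac{\mu^q\,\|\mathcal G\|_{L^{q,\infty}}^q}{\lambda^q}.
\]

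The crux is then to optimize the free parameter $\mu>0$. Differentiating the right-hand side and setting the derivative to zero produces a balancing value $\mu^{p+q}$ proportional to $\lambda^{q}\|\mathcal F\|_{L^p}^p\|\mathcal G\|_{L^{q,\infty}}^{-q}$; substituting it back collapses the two terms into a single expression of the form $C_{p,q}\,\lambda^{-r}\|\mathcal F\|_{L^p}^r\|\mathcal G\|_{L^{q,\infty}}^r$, where the scaling relation $r=pq/(p+q)$ (that is, $1/r=1/p+1/q$) is precisely what makes the powers of $\lambda$, $\|\mathcal F\|_{L^p}$ and $\|\mathcal G\|_{L^{q,\infty}}$ recombine correctly. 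Multiplying by $\lambda$ after extracting the $r$-th root and taking the supremum over $\lambda>0$ then delivers the weak-type bound with $C=C_{p,q}^{1/r}$. I expect this exponent bookkeeping in the optimization step to be the only genuinely delicate point: one must verify that the recombined exponent is exactly $r$ and that $C_{p,q}$ remains finite for all admissible $p,q$, while everything else is a direct unwinding of definitions.
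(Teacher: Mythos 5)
Your proposal is correct, but a comparison with ``the paper's proof'' is moot here: the paper never proves Lemma \ref{weakHold} at all --- it simply records the two inequalities with a citation to Grafakos's \emph{Classical Fourier Analysis} (pp.~11 and 16). What you have written is therefore a self-contained substitute for that citation, and it is in fact the standard textbook argument. Part (1) is exactly the usual reduction: since $1/r=1/p+1/q$ with $p,q<\infty$ forces $p/r,\,q/r>1$ to be conjugate exponents, applying ordinary H\"older to $|\mathcal F|^r$ and $|\mathcal G|^r$ and taking $r$-th roots is complete as stated. Part (2) is also sound in every step: the inclusion
\begin{equation*}
\big\{x:|\mathcal F(x)\mathcal G(x)|>\lambda\big\}\subseteq\big\{x:|\mathcal F(x)|>\mu\big\}\cup\big\{x:|\mathcal G(x)|>\lambda/\mu\big\}
\end{equation*}
holds for every $\mu>0$, Chebyshev and the definition of the weak norm give the two-term bound, and the optimization in $\mu$ (the minimizer satisfies $\mu^{p+q}=\tfrac{p}{q}\,\lambda^{q}\|\mathcal F\|_{L^p}^{p}\|\mathcal G\|_{L^{q,\infty}}^{-q}$) collapses both terms to
\begin{equation*}
d_{\mathcal F\mathcal G}(\lambda)\leq C_{p,q}\,\lambda^{-r}\,\|\mathcal F\|_{L^p}^{r}\,\|\mathcal G\|_{L^{q,\infty}}^{r},
\qquad C_{p,q}=\Big(\tfrac{q}{p}\Big)^{p/(p+q)}+\Big(\tfrac{p}{q}\Big)^{q/(p+q)},
\end{equation*}
where the exponent bookkeeping works precisely because $r=pq/(p+q)$, so $\|\mathcal F\|_{L^p}^{pq/(p+q)}=\|\mathcal F\|_{L^p}^{r}$ and likewise for the other two factors; $C_{p,q}$ is manifestly finite for all $1\leq p,q<\infty$. (The degenerate cases where one of the norms is $0$ or $\infty$ are trivial.) Two side remarks: the cited source actually proves the slightly stronger all-weak version $\|\mathcal F\mathcal G\|_{L^{r,\infty}}\leq C\|\mathcal F\|_{L^{p,\infty}}\|\mathcal G\|_{L^{q,\infty}}$, which implies the mixed statement via $\|\mathcal F\|_{L^{p,\infty}}\leq\|\mathcal F\|_{L^p}$, whereas your use of Chebyshev proves the mixed form directly; and your proof pattern (two-term splitting plus optimization of a free parameter) is the same device the paper itself uses repeatedly, e.g.\ in the Hedberg-type argument for \eqref{wanghua1} and in the proof of \eqref{cpq}.
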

As a straightforward consequence of Theorem \ref{thm1} and Lemma \ref{weakHold}, we can show that the $L^1$ norm in \eqref{L1norm} can be replaced by the general $L^r$ norm (or $L^{r,\infty}$ norm), and $T_{\Omega,\alpha}$ replaced by $\mathcal{T}_{\Omega,\alpha;m}$ with $m\geq2$.

\begin{thm}\label{thm15}
Suppose that $\vec{\Omega}=(\Omega_1,\Omega_2,\dots,\Omega_m)\in L^s(\mathbf{S}^{n-1})$ with $1<s\leq\infty$. Let $0<\alpha<mn$, $1\leq p<\infty$, $s'\leq p_1,p_2,\dots,p_m<\infty$ and
\begin{equation*}
1/p+\sum_{i=1}^m 1/{p_i}=1/r+\alpha/n
\end{equation*}
with $0<r<\min\{p,q\}$ and $1/q=\sum_{i=1}^m1/{p_i}-\alpha/n$.

$(i)$ If $s'<p_1,p_2,\dots,p_m<\infty$, then for any $f\in L^p(\mathbb R^n)$ and $\vec{g}=(g_1,g_2,\dots,g_m)\in L^{p_1}(\mathbb R^n)\times L^{p_2}(\mathbb R^n)\times \cdots\times L^{p_m}(\mathbb R^n)$, there exists a constant $C>0$ independent of $f$ and $\vec{g}$ such that
\begin{equation*}
\big\|f\cdot \mathcal{T}_{\Omega,\alpha;m}(\vec{g})\big\|_{L^{r}}\leq C\big\|f\big\|_{L^p}\cdot\Big(\prod_{i=1}^m\big\|g_i\big\|_{L^{p_i}}\Big).
\end{equation*}

$(ii)$ If $s'=\min\{p_1,p_2,\dots,p_m\}<\infty$, then for any $f\in L^p(\mathbb R^n)$ and $\vec{g}=(g_1,g_2,\dots,g_m)\in L^{p_1}(\mathbb R^n)\times L^{p_2}(\mathbb R^n)\times \cdots\times L^{p_m}(\mathbb R^n)$, there exists a constant $C>0$ independent of $f$ and $\vec{g}$ such that
\begin{equation*}
\big\|f\cdot \mathcal{T}_{\Omega,\alpha;m}(\vec{g})\big\|_{L^{r,\infty}}\leq C\big\|f\big\|_{L^p}\cdot\Big(\prod_{i=1}^m\big\|g_i\big\|_{L^{p_i}}\Big).
\end{equation*}
\end{thm}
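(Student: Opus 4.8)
The plan is to factor the product $f\cdot\mathcal{T}_{\Omega,\alpha;m}(\vec{g})$ by separating the two factors with H\"older's inequality and then controlling the $\mathcal{T}_{\Omega,\alpha;m}$ factor by Theorem \ref{thm1}; the whole argument rests on a single exponent identity that links the two results. Writing $1/P:=\sum_{i=1}^m 1/p_i$, the defining relation $1/q=\sum_{i=1}^m 1/p_i-\alpha/n$ is exactly $1/q=1/P-\alpha/n$, which is the target relation appearing in Theorem \ref{thm1} (with $P$ playing the role of the exponent denoted $p$ there). On the other hand, subtracting $\alpha/n$ from the hypothesis $1/p+\sum_{i=1}^m 1/p_i=1/r+\alpha/n$ and using $1/q=\sum_{i=1}^m 1/p_i-\alpha/n$ yields $1/r=1/p+1/q$, which is precisely the H\"older relation. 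Thus, once $\mathcal{T}_{\Omega,\alpha;m}(\vec{g})$ has been placed in $L^q$ (or $L^{q,\infty}$), H\"older with exponents $p$ and $q$ will deliver the desired $L^r$ (or $L^{r,\infty}$) estimate.

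Next I would check that $P$ lies in the admissible range of Theorem \ref{thm1}. Since $q$ is finite, $1/P=\sum_{i=1}^m 1/p_i>\alpha/n$, hence $P<n/\alpha$. For part $(i)$, the strict inequalities $p_i>s'$ give $1/p_i<1/s'$ for each $i$, so $1/P<m/s'$, i.e. $P>s'/m$; therefore $P\in(s'/m,n/\alpha)$ and part $(i)$ of Theorem \ref{thm1} applies, producing $\big\|\mathcal{T}_{\Omega,\alpha;m}(\vec{g})\big\|_{L^q}\lesssim\prod_{i=1}^m\|g_i\|_{L^{p_i}}$. For part $(ii)$, the equality $s'=\min\{p_1,\dots,p_m\}$ gives $1/P\le m/s'$, i.e. $P\ge s'/m$, so $P\in[s'/m,n/\alpha)$ and part $(ii)$ of Theorem \ref{thm1} yields the corresponding weak bound $\big\|\mathcal{T}_{\Omega,\alpha;m}(\vec{g})\big\|_{L^{q,\infty}}\lesssim\prod_{i=1}^m\|g_i\|_{L^{p_i}}$.

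Finally I would combine the two ingredients. For part $(i)$, since $f\in L^p$ with $1\le p<\infty$ and $\mathcal{T}_{\Omega,\alpha;m}(\vec{g})\in L^q$, part (1) of Lemma \ref{weakHold} with $1/r=1/p+1/q$ gives
\begin{equation*}
\big\|f\cdot\mathcal{T}_{\Omega,\alpha;m}(\vec{g})\big\|_{L^r}\le\|f\|_{L^p}\big\|\mathcal{T}_{\Omega,\alpha;m}(\vec{g})\big\|_{L^q}\lesssim\|f\|_{L^p}\prod_{i=1}^m\|g_i\|_{L^{p_i}}.
\end{equation*}
For part $(ii)$, the factor $\mathcal{T}_{\Omega,\alpha;m}(\vec{g})$ is only guaranteed to live in the weak space $L^{q,\infty}$, so I would instead invoke part (2) of Lemma \ref{weakHold} (H\"older for weak $L^p$ spaces), obtaining
\begin{equation*}
\big\|f\cdot\mathcal{T}_{\Omega,\alpha;m}(\vec{g})\big\|_{L^{r,\infty}}\le C\|f\|_{L^p}\big\|\mathcal{T}_{\Omega,\alpha;m}(\vec{g})\big\|_{L^{q,\infty}}\lesssim\|f\|_{L^p}\prod_{i=1}^m\|g_i\|_{L^{p_i}}.
\end{equation*}
There is no substantial analytic obstacle here: the content is entirely the bookkeeping of exponents, and the two points deserving care are the notational clash between the exponent $p$ attached to $f$ and the auxiliary exponent $P=(\sum_{i=1}^m 1/p_i)^{-1}$ that stands in for ``$p$'' inside Theorem \ref{thm1}, and the need to pair the weak-type bound of part $(ii)$ with the \emph{weak} H\"older inequality rather than the strong one, since in that case $\mathcal{T}_{\Omega,\alpha;m}(\vec{g})$ is only known to belong to $L^{q,\infty}$.
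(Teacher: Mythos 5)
Your proposal is correct and follows essentially the same route as the paper: H\"older's inequality (Lemma \ref{weakHold}, strong form for part $(i)$, weak form for part $(ii)$) splits off $\|f\|_{L^p}$, and Theorem \ref{thm1} bounds $\big\|\mathcal{T}_{\Omega,\alpha;m}(\vec{g})\big\|_{L^q}$ (resp. $L^{q,\infty}$). The only differences are that you verify explicitly the exponent-range bookkeeping ($P\in(s'/m,n/\alpha)$, resp. $P\in[s'/m,n/\alpha)$) that the paper leaves implicit, and you write out part $(ii)$ in full, which the paper dismisses as ``treated in the same manner.''
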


\begin{proof}
Using H\"{o}lder's inequality for $L^p$ spaces(part (1) of Lemma \ref{weakHold}), we see that
\begin{equation*}
\big\|f\cdot \mathcal{T}_{\Omega,\alpha;m}(\vec{g})\big\|_{L^{r}}\leq \big\|f\big\|_{L^p}\cdot\big\|\mathcal{T}_{\Omega,\alpha;m}(\vec{g})\big\|_{L^{q}},
\end{equation*}
where the positive number $q$ satisfies $1/p+1/q=1/r$, that is,
\begin{equation*}
1/q=1/r-1/p=\sum_{i=1}^m1/{p_i}-\alpha/n.
\end{equation*}
Therefore, by Theorem \ref{thm1}, we conclude that
\begin{equation*}
\big\|f\cdot \mathcal{T}_{\Omega,\alpha;m}(\vec{g})\big\|_{L^{r}}
\leq C\big\|f\big\|_{L^p}\cdot\Big(\prod_{i=1}^m\big\|g_i\big\|_{L^{p_i}}\Big).
\end{equation*}
This proves part $(i)$. The proof of part $(ii)$ is treated in the same manner.
\end{proof}

As a straightforward consequence of Theorem \ref{thm7} and Lemma \ref{weakHold}, we have the following result.

\begin{thm}\label{thm16}
Suppose that $\vec{\Omega}=(\Omega_1,\Omega_2,\dots,\Omega_m)\in L^s(\mathbf{S}^{n-1})$ with $1<s\leq\infty$. Let $0<\alpha<mn$, $1\leq p<\infty$, $s'=\min\{p_1,p_2,\dots,p_m\}$ and
\begin{equation*}
1/p+\sum_{i=1}^m 1/{p_i}=1/r+\alpha/n
\end{equation*}
with $0<r<\min\{p,q\}$ and $1/q=\sum_{i=1}^m1/{p_i}-\alpha/n$. Then for any given ball $B$ in $\mathbb R^n$, we have
\begin{equation*}
\big\|f\cdot \mathcal{T}_{\Omega,\alpha;m}(\vec{g})\big\|_{L^{r}(B)}
\leq C\big\|f\big\|_{L^p(B)}\cdot\bigg(\prod_{i=1}^{\ell}\big\|g_i\big\|_{L^{p_i}\log L(B)}\cdot
\prod_{j=\ell+1}^m\big\|g_j\big\|_{L^{p_j}(B)}\bigg).
\end{equation*}
Here we suppose that $p_1=\cdots=p_{\ell}=s'$ and $p_{\ell+1}=\cdots=p_m>s'$.
\end{thm}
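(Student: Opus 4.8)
The plan is to reduce the estimate to Theorem \ref{thm7} through a single application of H\"older's inequality on the ball $B$. First I would invoke part (1) of Lemma \ref{weakHold}, applied to the functions $f$ and $\mathcal{T}_{\Omega,\alpha;m}(\vec{g})$ restricted to $B$, with the exponent triple $(p,q,r)$. To justify this I must verify that $1/r=1/p+1/q$. This is immediate from the two hypotheses: subtracting the relation $1/q=\sum_{i=1}^m 1/{p_i}-\alpha/n$ from $1/p+\sum_{i=1}^m 1/{p_i}=1/r+\alpha/n$ leaves precisely $1/r=1/p+1/q$. Hence
\[
\big\|f\cdot \mathcal{T}_{\Omega,\alpha;m}(\vec{g})\big\|_{L^{r}(B)}\leq \big\|f\big\|_{L^p(B)}\cdot\big\|\mathcal{T}_{\Omega,\alpha;m}(\vec{g})\big\|_{L^{q}(B)}.
\]

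Next I would control the second factor by Theorem \ref{thm7} with $g_1,\dots,g_m$ in the roles of $f_1,\dots,f_m$. The exponent $q$ here is exactly the one produced by that theorem: setting $1/{\tilde p}:=\sum_{i=1}^m 1/{p_i}$, the defining relation $1/q=1/{\tilde p}-\alpha/n$ matches its hypothesis, and the endpoint condition $s'=\min\{p_1,\dots,p_m\}$ together with the partition $p_1=\cdots=p_\ell=s'$, $p_{\ell+1}=\cdots=p_m>s'$ is imposed identically in both statements. The only remaining requirement of Theorem \ref{thm7}, namely ${\tilde p}<n/{\alpha}$ (equivalently $\sum_{i=1}^m 1/{p_i}>\alpha/n$, i.e. $1/q>0$), is guaranteed by the hypothesis $0<r<\min\{p,q\}$, which forces $q$ to be a genuine finite positive exponent. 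Applying Theorem \ref{thm7} therefore gives
\[
\big\|\mathcal{T}_{\Omega,\alpha;m}(\vec{g})\big\|_{L^{q}(B)}\lesssim \prod_{i=1}^{\ell}\big\|g_i\big\|_{L^{p_i}\log L(B)}\cdot\prod_{j=\ell+1}^m\big\|g_j\big\|_{L^{p_j}(B)}.
\]

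Combining the two displayed bounds yields the asserted inequality. I do not anticipate any genuine obstacle: the analytic content is entirely absorbed into Theorem \ref{thm7}, and what remains is pure exponent bookkeeping. The only points demanding a little care are the verification $1/r=1/p+1/q$ and the confirmation that the hypotheses of Theorem \ref{thm7} (the value of $q$, the endpoint condition $s'=\min\{p_1,\dots,p_m\}$, and ${\tilde p}<n/{\alpha}$) are met by the data of the present statement. Note that only part (1) of Lemma \ref{weakHold} actually enters; the weak-type H\"older inequality of part (2) is not needed for this strong-type conclusion, although the identical scheme would deliver an $L^{r,\infty}(B)$ variant if one started instead from a weak-type bound for $\mathcal{T}_{\Omega,\alpha;m}(\vec{g})$.
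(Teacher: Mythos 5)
Your proposal is correct and follows essentially the same route as the paper's own proof: a single application of H\"older's inequality on the ball $B$ with the exponent relation $1/r=1/p+1/q$, followed by Theorem \ref{thm7} applied to $\mathcal{T}_{\Omega,\alpha;m}(\vec{g})$ in $L^q(B)$. Your explicit verification that $1/r=1/p+1/q$ and that $\tilde p<n/\alpha$ (via positivity of $1/q$ forced by $0<r<\min\{p,q\}$) is in fact slightly more careful bookkeeping than the paper records.
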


\begin{proof}
It follows directly from H\"{o}lder's inequality that for any given ball $B$ in $\mathbb R^n$,
\begin{equation*}
\big\|f\cdot \mathcal{T}_{\Omega,\alpha;m}(\vec{g})\big\|_{L^{r}(B)}
\leq\big\|f\big\|_{L^p(B)}\cdot\big\|\mathcal{T}_{\Omega,\alpha;m}(\vec{g})\big\|_{L^q(B)},
\end{equation*}
where the positive number $q$ is given by $1/q=1/r-1/p=\sum_{i=1}^m1/{p_i}-\alpha/n$. Therefore, by using Theorem \ref{thm7}, we conclude that
\begin{equation*}
\big\|f\cdot \mathcal{T}_{\Omega,\alpha;m}(\vec{g})\big\|_{L^{r}(B)}
\leq C\big\|f\big\|_{L^p(B)}\cdot\bigg(\prod_{i=1}^{\ell}\big\|g_i\big\|_{L^{p_i}\log L(B)}\cdot
\prod_{j=\ell+1}^m\big\|g_j\big\|_{L^{p_j}(B)}\bigg).
\end{equation*}
This is our desired inequality.
\end{proof}

\subsection{Olsen-type inequalities}
A classical result of Olsen \cite{ol} states that
\begin{equation*}
\big\|f\cdot I_{\alpha}(g)\big\|_{L^{r,\kappa}}
\leq C({p,q,\kappa,n})\big\|f\big\|_{L^{p,\kappa}}\cdot\big\|g\big\|_{L^{q,\kappa}},
\end{equation*}
under certain conditions on the parameters $p,q,r,\kappa$. Olsen considered this type of inequality to investigate the Schr\"{o}dinger equation. Later this inequality (also known as the trace inequality) was studied and sharpened by many authors. For more related results, the reader is referred to \cite{ol}, \cite{lida}, \cite{lida2} and \cite{sa}. There is an analogue of H\"older's inequality for Morrey spaces and weak Morrey spaces. Let $0<\kappa<1$, $1\leq p,q<\infty$ and $0<r<\infty$ with $1/p+1/q=1/r$. We have already shown that
\begin{equation}\label{lastin}
\big\|\mathcal{F}\cdot \mathcal{G}\big\|_{L^{r,\kappa}}
\leq\big\|\mathcal{F}\big\|_{L^{p,\kappa}}\cdot\big\|\mathcal{G}\big\|_{L^{q,\kappa}}.
\end{equation}
It should be pointed out that the above estimate has been proved by Olsen \cite{ol} if $1\leq r<\infty$. We remark that by using similar arguments, the weak version of \eqref{lastin} also holds true. Indeed, by using H\"{o}lder's inequality for weak $L^p$ spaces, we get
\begin{equation*}
\begin{split}
\big\|\mathcal{F}\cdot \mathcal{G}\big\|_{L^{r,\infty}(B)}
&\leq C\big\|\mathcal{F}\big\|_{L^p(B)}\big\|\mathcal{G}\big\|_{L^{q,\infty}(B)}\\
&= C\big\|\mathcal{F}\cdot\chi_{B}\big\|_{L^p(B)}\big\|\mathcal{G}\cdot\chi_{B}\big\|_{L^{q,\infty}(B)}\\
&\leq C\Big(\big\|\mathcal{F}\big\|_{L^{p,\kappa}}m(B)^{\kappa/p}\Big)
\cdot\Big(\big\|\mathcal{G}\big\|_{WL^{q,\kappa}}m(B)^{\kappa/q}\Big)\\
&=C\Big(\big\|\mathcal{F}\big\|_{L^{p,\kappa}}\cdot\big\|\mathcal{G}\big\|_{WL^{q,\kappa}}\Big)m(B)^{\kappa/r},
\end{split}
\end{equation*}
which in turn implies that
\begin{equation}\label{dot}
\frac{1}{m(B)^{\kappa/r}}\big\|\mathcal{F}\cdot \mathcal{G}\big\|_{L^{r,\infty}(B)}
\leq C\Big(\big\|\mathcal{F}\big\|_{L^{p,\kappa}}\cdot\big\|\mathcal{G}\big\|_{WL^{q,\kappa}}\Big).
\end{equation}
Since \eqref{dot} is independent of the choice of the ball $B$, we conclude that $\mathcal{F}\cdot \mathcal{G}$ is in the space $WL^{r,\kappa}(\mathbb R^n)$ and
\begin{equation*}
\big\|\mathcal{F}\cdot \mathcal{G}\big\|_{WL^{r,\kappa}}
\leq C\big\|\mathcal{F}\big\|_{L^{p,\kappa}}\cdot\big\|\mathcal{G}\big\|_{WL^{q,\kappa}}.
\end{equation*}

This gives us the following:
\begin{lem}\label{weakMHold}
Let $0<\kappa<1$, $1\leq p,q<\infty$ and $0<r<\infty$ such that $1/r=1/p+1/q$. Then we have
\begin{enumerate}
  \item H\"older's inequality for Morrey spaces:
\begin{equation*}
\big\|\mathcal{F}\cdot \mathcal{G}\big\|_{L^{r,\kappa}}
\leq \big\|\mathcal{F}\big\|_{L^{p,\kappa}}\cdot\big\|\mathcal{G}\big\|_{L^{q,\kappa}}.
\end{equation*}
  \item H\"older's inequality for weak Morrey spaces:
\begin{equation*}
\big\|\mathcal{F}\cdot \mathcal{G}\big\|_{WL^{r,\kappa}}
\leq C\big\|\mathcal{F}\big\|_{L^{p,\kappa}}\cdot\big\|\mathcal{G}\big\|_{WL^{q,\kappa}}.
\end{equation*}
\end{enumerate}
\end{lem}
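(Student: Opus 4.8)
The plan is to reduce both inequalities to their scalar counterparts (Lemma \ref{weakHold}) by localizing to an arbitrary ball and then renormalizing by the appropriate power of $m(B)$. For part (1), I would fix a ball $B\subset\mathbb R^n$ and apply the strong H\"older inequality for $L^p$ spaces to the restrictions $\mathcal{F}\cdot\chi_B$ and $\mathcal{G}\cdot\chi_B$, obtaining
\[
\big\|\mathcal{F}\cdot\mathcal{G}\cdot\chi_B\big\|_{L^r}\leq\big\|\mathcal{F}\cdot\chi_B\big\|_{L^p}\cdot\big\|\mathcal{G}\cdot\chi_B\big\|_{L^q}.
\]
Dividing by $m(B)^{\kappa/r}$ and using $1/r=1/p+1/q$ to factor $m(B)^{\kappa/r}=m(B)^{\kappa/p}\cdot m(B)^{\kappa/q}$, the right-hand side becomes a product of the two Morrey averages, each bounded by the corresponding Morrey norm. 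Taking the supremum over all balls $B$ then yields the claim; this is precisely the two-function case of the computation already carried out for \eqref{multimowang}, so no new ideas are needed.

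For part (2), I would repeat the same scheme but invoke the weak H\"older inequality (part (2) of Lemma \ref{weakHold}) in place of the strong one. The crucial structural point is that the weak H\"older inequality is \emph{asymmetric}: it requires one factor in the strong space $L^p$ and the other in the weak space $L^{q,\infty}$, which is exactly why the conclusion pairs the strong Morrey norm $\big\|\mathcal{F}\big\|_{L^{p,\kappa}}$ with the weak Morrey norm $\big\|\mathcal{G}\big\|_{WL^{q,\kappa}}$ and carries an absolute constant $C$. Concretely, on a fixed ball $B$ one has
\[
\big\|\mathcal{F}\cdot\mathcal{G}\big\|_{L^{r,\infty}(B)}\leq C\big\|\mathcal{F}\cdot\chi_B\big\|_{L^p}\cdot\big\|\mathcal{G}\cdot\chi_B\big\|_{L^{q,\infty}},
\]
and inserting the Morrey and weak Morrey bounds together with the same factorization of $m(B)^{\kappa/r}$ produces precisely \eqref{dot}. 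Since \eqref{dot} is independent of the choice of $B$, taking the supremum over all balls completes the proof.

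As the derivation leading to \eqref{lastin} and \eqref{dot} is already displayed in the text preceding the statement, there is essentially no remaining obstacle. The only points requiring care are routine bookkeeping: first, ensuring that $m(B)^{\kappa/r}$ splits correctly as $m(B)^{\kappa/p}\cdot m(B)^{\kappa/q}$ under the relation $1/r=1/p+1/q$; and second, in the weak case, keeping the strong/weak roles of $\mathcal{F}$ and $\mathcal{G}$ consistent throughout so that the asymmetry of the weak H\"older inequality is respected and the constant $C$ is tracked correctly.
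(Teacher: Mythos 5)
Your proposal is correct and follows exactly the paper's own route: part (1) is the two-function case of the computation establishing \eqref{multimowang} (localize to a ball, apply H\"older's inequality for $L^p$ spaces, split $m(B)^{\kappa/r}=m(B)^{\kappa/p}\cdot m(B)^{\kappa/q}$, take the supremum), and part (2) is the displayed derivation of \eqref{dot} via H\"older's inequality for weak $L^p$ spaces with the same normalization and supremum over balls. Your remark about respecting the strong/weak asymmetry of the factors matches the paper's treatment, so there is nothing to add.
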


In view of Lemma \ref{weakMHold} and Theorems \ref{thm2} and \ref{thm8}, we can also prove the following Olsen-type inequalities for  $\mathcal{T}_{\Omega,\alpha;m}$ with $m\geq2$.

\begin{thm}\label{thm17}
Suppose that $\vec{\Omega}=(\Omega_1,\Omega_2,\dots,\Omega_m)\in L^s(\mathbf{S}^{n-1})$ with $1<s\leq\infty$. Let $0<\alpha<mn$, $1\leq p<\infty$, $s'\leq p_1,p_2,\dots,p_m<\infty$, $0<\kappa<1$ and
\begin{equation*}
1/p+\sum_{i=1}^m 1/{p_i}=1/r+\alpha/{[n(1-\kappa)]}
\end{equation*}
with $0<r<\min\{p,q\}$ and $1/q=\sum_{i=1}^m 1/{p_i}-\alpha/{[n(1-\kappa)]}$.

$(i)$ If $s'<p_1,p_2,\dots,p_m<\infty$, then for any $f\in L^{p,\kappa}(\mathbb R^n)$ and $\vec{g}=(g_1,g_2,\dots,g_m)\in L^{p_1,\kappa}(\mathbb R^n)\times L^{p_2,\kappa}(\mathbb R^n)\times \cdots\times L^{p_m,\kappa}(\mathbb R^n)$, there exists a constant $C>0$ independent of $f$ and $\vec{g}$ such that
\begin{equation*}
\big\|f\cdot \mathcal{T}_{\Omega,\alpha;m}(\vec{g})\big\|_{L^{r,\kappa}}
\leq C\big\|f\big\|_{L^{p,\kappa}}\cdot\Big(\prod_{i=1}^m\big\|g_i\big\|_{L^{p_i,\kappa}}\Big).
\end{equation*}

$(ii)$ If $s'=\min\{p_1,p_2,\dots,p_m\}<\infty$, then for any $f\in L^{p,\kappa}(\mathbb R^n)$ and $\vec{g}=(g_1,g_2,\dots,g_m)\in L^{p_1,\kappa}(\mathbb R^n)\times L^{p_2,\kappa}(\mathbb R^n)\times \cdots\times L^{p_m,\kappa}(\mathbb R^n)$, there exists a constant $C>0$ independent of $f$ and $\vec{g}$ such that
\begin{equation*}
\big\|f\cdot \mathcal{T}_{\Omega,\alpha;m}(\vec{g})\big\|_{WL^{r,\kappa}}
\leq C\big\|f\big\|_{L^{p,\kappa}}\cdot\Big(\prod_{i=1}^m\big\|g_i\big\|_{L^{p_i,\kappa}}\Big).
\end{equation*}
\end{thm}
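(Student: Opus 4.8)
The plan is to mimic the proof of Theorem \ref{thm15}, simply replacing the Lebesgue-space tools by their Morrey-space analogues. The whole argument rests on factoring the product $f\cdot\mathcal{T}_{\Omega,\alpha;m}(\vec{g})$ by Hölder's inequality and then invoking the Morrey boundedness of $\mathcal{T}_{\Omega,\alpha;m}$ already established in Theorem \ref{thm2}.

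First I would record the exponent bookkeeping. Writing $1/p_0:=\sum_{i=1}^m 1/{p_i}$, the hypothesis $1/p+\sum_{i=1}^m 1/{p_i}=1/r+\alpha/[n(1-\kappa)]$ together with $1/q=\sum_{i=1}^m 1/{p_i}-\alpha/[n(1-\kappa)]$ yields at once
\[
\frac{1}{\,r\,}=\frac{1}{\,p\,}+\frac{1}{\,q\,}.
\]
This is precisely the relation required to apply Hölder's inequality for Morrey spaces (Lemma \ref{weakMHold}).

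For part $(i)$, I would apply part (1) of Lemma \ref{weakMHold} with the triple $(p,q,r)$ to obtain
\[
\big\|f\cdot \mathcal{T}_{\Omega,\alpha;m}(\vec{g})\big\|_{L^{r,\kappa}}\leq \big\|f\big\|_{L^{p,\kappa}}\cdot\big\|\mathcal{T}_{\Omega,\alpha;m}(\vec{g})\big\|_{L^{q,\kappa}}.
\]
Since $1/q=1/{p_0}-\alpha/[n(1-\kappa)]$ with $\vec{g}\in\prod_{i=1}^m L^{p_i,\kappa}(\mathbb R^n)$ and $s'<p_i<\infty$, part $(i)$ of Theorem \ref{thm2} gives $\|\mathcal{T}_{\Omega,\alpha;m}(\vec{g})\|_{L^{q,\kappa}}\lesssim\prod_{i=1}^m\|g_i\|_{L^{p_i,\kappa}}$, and combining this with the display above proves the claim. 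For part $(ii)$ I would instead use part (2) of Lemma \ref{weakMHold}, which factors out $f$ into the $L^{p,\kappa}$ norm and leaves $\mathcal{T}_{\Omega,\alpha;m}(\vec{g})$ measured in $WL^{q,\kappa}$; part $(ii)$ of Theorem \ref{thm2} then controls the latter by $\prod_{i=1}^m\|g_i\|_{L^{p_i,\kappa}}$.

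The only point needing genuine verification — and the nearest thing to an obstacle — is that the hypotheses of Theorem \ref{thm2} are actually met, in particular the constraint $0<\kappa<1-{(\alpha p_0)}/n$ and the range $p_0\in(s'/m,n/{\alpha})$ (resp. $[s'/m,n/{\alpha})$), since the statement of Theorem \ref{thm17} only assumes $0<\kappa<1$. These follow automatically from the standing assumptions: the requirement $0<r<\min\{p,q\}$ forces $1/q>0$, i.e. $1/{p_0}>\alpha/[n(1-\kappa)]$, whence $p_0<n(1-\kappa)/{\alpha}<n/{\alpha}$ and, rearranging the first inequality, $\kappa<1-{(\alpha p_0)}/n$; moreover $s'\leq p_i$ gives $1/{p_0}\leq m/{s'}$, i.e. $p_0\geq s'/m$, with strict inequality exactly when all $p_i>s'$. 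Thus no case analysis beyond that already carried out in Theorem \ref{thm2} is needed, and the proof reduces to the two displayed applications of Hölder's inequality and Theorem \ref{thm2}.
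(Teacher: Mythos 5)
Your proposal is correct and follows essentially the same route as the paper's own proof: H\"older's inequality for Morrey spaces (Lemma \ref{weakMHold}, parts (1) and (2)) to factor off $\|f\|_{L^{p,\kappa}}$, followed by Theorem \ref{thm2} applied to $\mathcal{T}_{\Omega,\alpha;m}(\vec{g})$. Your verification that $0<r<\min\{p,q\}$ forces $\kappa<1-(\alpha p^{*})/n$ and that $s'\leq p_i$ gives $p^{*}\geq s'/m$ is exactly the observation the paper makes (in fact you spell it out more explicitly than the paper does), so nothing is missing.
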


\begin{proof}
We first observe that the condition $1/p+\sum_{i=1}^m 1/{p_i}=1/r+\alpha/{[n(1-\kappa)]}$ with $0<r<\min\{p,q\}$ implies that
\begin{equation*}
0<\kappa<1-{(\alpha p^{*})}/n\quad \& \quad 1/{p^{*}}=\sum_{i=1}^m 1/{p_i}.
\end{equation*}
When $s'<p_1,p_2,\dots,p_m<\infty$, by using part (1) of Lemma \ref{weakMHold}, then we have
\begin{equation*}
\big\|f\cdot \mathcal{T}_{\Omega,\alpha;m}(\vec{g})\big\|_{L^{r,\kappa}}
\leq \big\|f\big\|_{L^{p,\kappa}}\cdot\big\|\mathcal{T}_{\Omega,\alpha;m}(\vec{g})\big\|_{L^{q,\kappa}},
\end{equation*}
where the positive number $q$ satisfies $1/r=1/p+1/{q}$. Notice that
\begin{equation*}
1/{q}=\sum_{i=1}^m 1/{p_i}-\alpha/{[n(1-\kappa)]}=1/{p^{*}}-\alpha/{[n(1-\kappa)]}.
\end{equation*}
Thus, by Theorem \ref{thm2}, we further obtain
\begin{equation*}
\big\|f\cdot \mathcal{T}_{\Omega,\alpha;m}(\vec{g})\big\|_{L^{r,\kappa}}
\leq C\big\|f\big\|_{L^{p,\kappa}}\cdot\Big(\prod_{i=1}^m\big\|g_i\big\|_{L^{p_i,\kappa}}\Big),
\end{equation*}
whenever $0<\kappa<1-{(\alpha p^{*})}/n$. This shows $(i)$.

When $s'=\min\{p_1,p_2,\dots,p_m\}<\infty$, from part (2) of Lemma \ref{weakMHold} and Theorem \ref{thm2}, it then follows that
\begin{equation*}
\begin{split}
\big\|f\cdot \mathcal{T}_{\Omega,\alpha;m}(\vec{g})\big\|_{WL^{r,\kappa}}
&\leq C\big\|f\big\|_{L^{p,\kappa}}\cdot\big\|\mathcal{T}_{\Omega,\alpha;m}(\vec{g})\big\|_{WL^{q,\kappa}}\\
&\leq C\big\|f\big\|_{L^{p,\kappa}}\cdot\Big(\prod_{i=1}^m\big\|g_i\big\|_{L^{p_i,\kappa}}\Big).
\end{split}
\end{equation*}
This shows part $(ii)$ and the proof is complete.
\end{proof}

\begin{thm}\label{thm18}
Suppose that $\vec{\Omega}=(\Omega_1,\Omega_2,\dots,\Omega_m)\in L^s(\mathbf{S}^{n-1})$ with $1<s\leq\infty$. Let $0<\alpha<mn$, $1\leq p<\infty$, $s'=\min\{p_1,p_2,\dots,p_m\}$, $0<\kappa<1$ and
\begin{equation*}
1/p+\sum_{i=1}^m 1/{p_i}=1/r+\alpha/{[n(1-\kappa)]}
\end{equation*}
with $0<r<\min\{p,q\}$ and $1/q=\sum_{i=1}^m 1/{p_i}-\alpha/{[n(1-\kappa)]}$. Then we have
\begin{equation*}
\big\|f\cdot \mathcal{T}_{\Omega,\alpha;m}(\vec{g})\big\|_{L^{r,\kappa}}
\leq C\big\|f\big\|_{L^{p,\kappa}}\cdot\bigg(\prod_{i=1}^{\ell}\big\|g_i\big\|_{(L\log L)^{p_i,\kappa}}\cdot
\prod_{j=\ell+1}^m\big\|g_j\big\|_{L^{p_j,\kappa}}\bigg).
\end{equation*}
Here we suppose that $p_1=\cdots=p_{\ell}=s'$ and $p_{\ell+1}=\cdots=p_m>s'$.
\end{thm}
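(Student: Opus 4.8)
The plan is to follow the same two-step scheme used in the proof of Theorem \ref{thm17}, replacing the strong-type Morrey estimate (Theorem \ref{thm2}) by its endpoint counterpart (Theorem \ref{thm8}). First I would apply H\"older's inequality for Morrey spaces, namely part (1) of Lemma \ref{weakMHold}, with exponents satisfying $1/r = 1/p + 1/q$, to factor
\begin{equation*}
\big\|f\cdot \mathcal{T}_{\Omega,\alpha;m}(\vec{g})\big\|_{L^{r,\kappa}} \leq \big\|f\big\|_{L^{p,\kappa}}\cdot \big\|\mathcal{T}_{\Omega,\alpha;m}(\vec{g})\big\|_{L^{q,\kappa}}.
\end{equation*}
This isolates the Morrey norm of $\mathcal{T}_{\Omega,\alpha;m}(\vec{g})$, which is exactly the quantity controlled by Theorem \ref{thm8} in the endpoint case $s'=\min\{p_1,\dots,p_m\}$.

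The second step is to verify that the parameters meet the hypotheses of Theorem \ref{thm8}. Writing $1/{p^{*}} := \sum_{i=1}^m 1/{p_i}$, the defining relation $1/q = \sum_{i=1}^m 1/{p_i} - \alpha/[n(1-\kappa)]$ becomes $1/q = 1/{p^{*}} - \alpha/[n(1-\kappa)]$, which is precisely the exponent relation in Theorem \ref{thm8} with the role of $p$ there played by $p^{*}$. The only genuine computation---identical to the one already carried out for Theorem \ref{thm17}---is to deduce from $0 < r < \min\{p,q\}$ (equivalently $1/q > 0$) and $0<\kappa<1$ the admissibility range $0 < \kappa < 1 - {(\alpha p^{*})}/n$: from $1/{p^{*}} > \alpha/[n(1-\kappa)]$ one multiplies through by $n(1-\kappa)p^{*} > 0$ to obtain $n(1-\kappa) > \alpha p^{*}$, that is $\kappa < 1 - {(\alpha p^{*})}/n$, while the bound $p^{*} < n/\alpha$ follows at once.

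Having checked the hypotheses, I would invoke Theorem \ref{thm8} directly to bound
\begin{equation*}
\big\|\mathcal{T}_{\Omega,\alpha;m}(\vec{g})\big\|_{L^{q,\kappa}} \lesssim \prod_{i=1}^{\ell}\big\|g_i\big\|_{(L\log L)^{p_i,\kappa}}\cdot\prod_{j=\ell+1}^m\big\|g_j\big\|_{L^{p_j,\kappa}},
\end{equation*}
and combining this with the H\"older factorization from the first step yields the claimed inequality. I do not expect any essential obstacle here: the argument is a direct transcription of the proof of Theorem \ref{thm17}, the substantive analytic work having already been absorbed into Theorem \ref{thm8}. The only point requiring care is the bookkeeping of exponents in the second step, and even that merely repeats the verification made earlier.
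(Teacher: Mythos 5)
Your proposal is correct and matches the paper's own proof essentially verbatim: Hölder's inequality for Morrey spaces (part (1) of Lemma \ref{weakMHold}) with $1/r=1/p+1/q$, followed by the verification that $1/q=1/{p^{*}}-\alpha/[n(1-\kappa)]$ and $0<\kappa<1-(\alpha p^{*})/n$ with $1/{p^{*}}=\sum_{i=1}^m 1/{p_i}$, and then an application of Theorem \ref{thm8}. Your explicit derivation of the condition $\kappa<1-(\alpha p^{*})/n$ from $1/q>0$ is the same bookkeeping the paper performs in the proof of Theorem \ref{thm17} and merely cites in Theorem \ref{thm18}.
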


\begin{proof}
It follows immediately from part (1) of Lemma \ref{weakMHold} that
\begin{equation*}
\big\|f\cdot \mathcal{T}_{\Omega,\alpha;m}(\vec{g})\big\|_{L^{r,\kappa}}
\leq \big\|f\big\|_{L^{p,\kappa}}\cdot\big\|\mathcal{T}_{\Omega,\alpha;m}(\vec{g})\big\|_{L^{q,\kappa}},
\end{equation*}
where $1/r=1/p+1/{q}$. Note that
\begin{equation*}
1/q=\sum_{i=1}^m 1/{p_i}-\alpha/{[n(1-\kappa)]}=1/{p^{*}}-\alpha/{[n(1-\kappa)]}\quad \& \quad 0<\kappa<1-{(\alpha p^{*})}/n
\end{equation*}
with $1/{p^*}=\sum_{i=1}^m 1/{p_i}$. Thus, by using Theorem \ref{thm8}, we deduce that
\begin{equation*}
\big\|f\cdot \mathcal{T}_{\Omega,\alpha;m}(\vec{g})\big\|_{L^{r,\kappa}}
\leq C\big\|f\big\|_{L^{p,\kappa}}\cdot\bigg(\prod_{i=1}^{\ell}\big\|g_i\big\|_{(L\log L)^{p_i,\kappa}}\cdot
\prod_{j=\ell+1}^m\big\|g_j\big\|_{L^{p_j,\kappa}}\bigg).
\end{equation*}
This is our desired inequality, and the proof is complete.
\end{proof}

\section*{Acknowledgment}
The authors were supported by a grant from Xinjiang University under the project``Real-Variable Theory of Function Spaces and Its Applications".This work was supported by the Natural Science Foundation of China (No.XJEDU2020Y002 and 2022D01C407).

\begin{center}
References
\end{center}


\begin{thebibliography}{99}

\bibitem{adams}
D. R. Adams, \emph{A note on Riesz potentials}, Duke Math. J, \textbf{42}(1975), 765--778.
\bibitem{adams1}
D. R. Adams, \emph{Morrey Spaces}, Lecture notes in applied and numerical harmonic analysis, Birkh\"{a}user/Springer, Cham, 2015.
\bibitem{cha}
S. Chanillo, D. Watson and R. L. Wheeden, \emph{Some integral and maximal operators related to starlike sets}, Studia Math., \textbf{107}(1993), 223--255.
\bibitem{chen}
X. Chen and Q. Y. Xue, \emph{Weighted estimates for a class of multilinear fractional type operators}, J. Math. Anal. Appl, \textbf{362}(2010), 355--373.
\bibitem{chiarenza}
F. Chiarenza and M. Frasca, \emph{Morrey spaces and Hardy--Littlewood maximal function}, Rend. Math. Appl, \textbf{7}(1987), 273--279.
\bibitem{ding1}
Y. Ding and S. Z. Lu, \emph{Weighted norm inequalities for fractional integral operators with rough kernel}, Canad. J. Math, \textbf{50}(1998), 29--39.
\bibitem{ding2}
Y. Ding and S. Z. Lu, \emph{Boundedness of homogeneous fractional integrals on $L^p$ for $n/{\alpha}\leq p\leq\infty$}, Nagoya Math. J., \textbf{167}(2002), 17--33.
\bibitem{duoand}
J. Duoandikoetxea, \emph{Fourier Analysis}, American Mathematical Society, Providence, Rhode Island, 2000.
\bibitem{gra1}
L. Grafakos, \emph{On multilinear fractional integrals}, Studia Math., \textbf{102}(1992), 49--56.
\bibitem{gra2}
L. Grafakos and N. Kalton, \emph{Some remarks on multilinear maps and interpolation}, Math. Ann., \textbf{319}(2001), 151--180.
\bibitem{grafakos}
L. Grafakos, \emph{Classical Fourier Analysis}, Third Edition, Springer-Verlag, 2014.
\bibitem{hed}
L. Hedberg, \emph{On certain convolution inequalities}, Proc. Amer. Math. Soc., \textbf{36}(1972), 505--510.
\bibitem{john}
F. John and L. Nirenberg, \emph{On functions of bounded mean oscillation}, Comm. Pure Appl. Math, \textbf{14}(1961), 415--426.
\bibitem{kenig}
C. E. Kenig and E. M. Stein, \emph{Multilinear estimates and fractional integration}, Math. Res. Lett., \textbf{6}(1999), 1--15.
\bibitem{komori}
Y. Komori and S. Shirai, \emph{Weighted Morrey spaces and a singular integral operator}, Math. Nachr., \textbf{282}(2009), 219--231.
\bibitem{lerner}
A. K. Lerner, S. Ombrosi, C. P\'erez, R. H. Torres and R. Trujillo-Gonz\'alez, \emph{New maximal functions and multiple weights for the multilinear Calder\'on--Zygmund theory}, Adv. Math., \textbf{220}(2009), 1222--1264.
\bibitem{lee}
K. W. Li, K. Moen and W. C. Sun, \emph{Sharp weighted inequalities for multilinear fractional maximal operators and fractional integrals}, Math. Nachr., \textbf{288}(2015), 619--632.
\bibitem{lida}
T. Iida, E. Sato, Y. Sawano and H. Tanaka, \emph{Multilinear fractional integrals on Morrey spaces}, Acta Math. Sin. (Engl. Ser.), \textbf{28}(2012), 1375--1384.
\bibitem{lida2}
T. Iida, E. Sato, Y. Sawano and H. Tanaka, \emph{Weighted norm inequalities for multilinear fractional operators on Morrey spaces}, Studia Math., \textbf{205}(2011), 139--170.
\bibitem{lida3}
T. Iida, E. Sato, Y. Sawano and H. Tanaka, \emph{Sharp bounds for multilinear fractional integral operators on Morrey type spaces}, Positivity, \textbf{16}(2012), 339--358.
\bibitem{lieb}
E. H. Lieb, \emph{Sharp constants in the Hardy--Littlewood--Sobolev and related inequalities}, Ann. of Math., \textbf{118}(1983), 349--374.
\bibitem{lieb2}
R. L. Frank and E. H. Lieb, \emph{Sharp constants in several inequalities on the Heisenberg group}, Ann. of Math., \textbf{176}(2012), 349--381.
\bibitem{li}
E. H. Lieb and M. Loss, \emph{Analysis}, Second edition, American Mathematical Society, Providence, Rhode Island, 2001.
\bibitem{lu}
S. Z. Lu, Y. Ding and D. Y. Yan, \emph{Singular Integrals and Related Topics}, World Scientific Publishing, NJ, 2007.
\bibitem{lu2}
S. Z. Lu, D. C. Yang and Z. S. Zhou, \emph{Sublinear operators with rough kernel on generalized Morrey spaces}, Hokkaido Math. J, \textbf{27}(1998), 219--232.
\bibitem{mizuhara}
T. Mizuhara, \emph{Boundedness of some classical operators on generalized Morrey spaces}, Harmonic Analysis, ICM-90 Satellite Conference Proceedings, Springer-Verlag, Tokyo, (1991), 183--189.
\bibitem{moen}
K. Moen, \emph{Weighted inequalities for multilinear fractional integral operators}, Collect. Math., \textbf{60}(2009), 213--238.
\bibitem{morrey}
C. B. Morrey, \emph{On the solutions of quasi-linear elliptic partial differential equations}, Trans. Amer. Math. Soc., \textbf{43}(1938), 126--166.
\bibitem{muckenhoupt1}
B. Muckenhoupt and R. L. Wheeden, \emph{Weighted norm inequalities for singular and fractional integrals}, Trans. Amer. Math. Soc., \textbf{161}(1971), 249--258.
\bibitem{muckenhoupt2}
B. Muckenhoupt and R. L. Wheeden, \emph{Weighted norm inequalities for fractional integrals}, Trans. Amer. Math. Soc., \textbf{192}(1974), 261--274.
\bibitem{ol}
P. Olsen, \emph{Fractional integration, Morrey spaces and a Schr\"{o}dinger equation}, Comm. Partial Differential Equations., \textbf{20}(1995), 2005--2055.
\bibitem{peetre}
J. Peetre, \emph{On the theory of $\mathcal L_{p,\lambda}$ spaces}, J. Funct. Anal, \textbf{4}(1969), 71--87.
\bibitem{pradolini}
G. Pradolini, \emph{Weighted inequalities and pointwise estimates for the multilinear fractional integral and maximal operators}, J. Math. Anal. Appl, \textbf{367}(2010), 640--656.
\bibitem{tang}
L. Tang, \emph{Endpoint estimates for multilinear fractional integrals}, J. Aust. Math. Soc, \textbf{84}(2008), 419--429.
\bibitem{sa}
Y. Sawano, S. Sugano and H. Tanaka, \emph{Orlicz--Morrey spaces and fractional operators}, Potential Anal., \textbf{36}(2012), 517--556.
\bibitem{stein}
E. M. Stein, \emph{Singular Integrals and Differentiability Properties of Functions}, Princeton Univ. Press, Princeton, New Jersey, 1970.
\bibitem{stein2}
E. M. Stein, \emph{Note on the class $L\log L$}, Studia Math., \textbf{32}(1969), 305--310.
\bibitem{wang}
H. Wang, \emph{Boundedness of fractional integral operators with rough kernels on weighted Morrey spaces}, Acta Math. Sinica (Chin. Ser),  \textbf{56}(2013), 175--186.
\bibitem{wang2}
J. L. Du and H. Wang, \emph{A new estimate for homogeneous fractional integral operators on weighted Morrey spaces}, Real Analysis Exchange, to appear.

\end{thebibliography}
\end{document}